\newlist{steps}{enumerate}{1}
\setlist[steps, 1]{wide=0pt, label=Step \arabic*., font=\scshape}
\newcommand{\eps}{\varepsilon}
\newcommand{\jsize}{\xi} 
\DeclareMathOperator{\sgn}{sgn}
\newcommand{\dd}{\mathop{}\!\mathrm{d}}
\newcommand{\abs}[1]{\left\lvert#1\right\rvert}
\newcommand{\absd}[1]{\left\|#1\right\|}
\renewcommand{\textbf}[1]{\begingroup\bfseries\mathversion{bold}#1\endgroup} 
\newcommand{\citer}[2][]{#1 in \cite{#2}} 
\newcommand{\commenter}[1]{}
\theoremstyle {definition} 
\newtheorem{ex}{Example}[section]
\theoremstyle{plain} \newtheorem {theo} {Theorem}[section]
\newtheorem{prop}[theo]{Proposition}
\newtheorem{lem}[theo]{Lemma}
\newtheorem{remark}[theo]{Remark}
\begin{document}

\title{Kinetic time-inhomogeneous Lévy-driven model}
    \author{Mihai Gradinaru and Emeline Luirard}
    \affil{\small Univ Rennes, CNRS, IRMAR - UMR 6625, F-35000 Rennes, France\\
    \texttt{\small \{Mihai.Gradinaru,Emeline.Luirard\}@univ-rennes1.fr}}
\date{
}
\maketitle

{\small\noindent {\bf Abstract:}~We study a one-dimensional kinetic stochastic model driven by a Lévy process with a non-linear time-inhomogeneous drift. More precisely, the process $(V,X)$ is considered, where $X$ is the position of the particle and its velocity $V$ is the solution of a stochastic differential equation with a drift of the form $t^{-\beta}F(v)$. The driving process can be a stable Lévy process of index $\alpha$ or a general Lévy process under appropriate assumptions. The function $F$ satisfies a homogeneity condition and $\beta$ is non-negative. The behavior in large time of the process $(V,X)$ is proved and the precise rate of convergence is pointed out by using stochastic analysis tools. To this end, we compute the moment estimates of the velocity process.}\\
{\small\noindent{\bf Keywords:}~kinetic stochastic equation; time-inhomogeneous stochastic differential equation; Lévy process; explosion times; scaling transformations; asymptotic distributions; ergodicity; tightness.}\\
{\small\noindent{\bf MSC2020 Subject Classification:}~Primary~60H10; Secondary~60F17; 60G52; 60G18; 60J65.}

\section{Introduction}
In this paper, we consider a one-dimensional stochastic kinetic model driven by a Lévy process. 
\begin{equation}\label{SKE_stable}
    \dd V_t=\dd L_t-F(V_t)t^{-\beta}\dd t\quad
    \mbox{and}\quad
    X_t=X_0+\int_{0}^{t}V_s\dd s.
\end{equation}
The process $(V_t,X_t)_{t\geq0}$ may be thought of as the velocity and position processes of a particle subject to a friction force $F(v)t^{-\beta}$ and interacting with its environment. \\
Our purpose is to study the long-time behavior of solutions of \eqref{SKE_stable} where $L$ is an $\alpha$-stable (non-symmetric) Lévy process. More precisely, we look for the convergence in distribution of the process
$(V_{t/\eps}, X_{t/\eps})_{t>0}$, as $\eps \to 0$, with an appropriate rate. 

\medskip \noindent
It is a simple observation, when $F=0$, to see that the rescaled process
 $(\eps^{\frac{1}{\alpha}}V_{t/\eps}, \eps^{1+\frac{1}{\alpha}}X_{t/\eps})_{t>0}$ converges in distribution towards the Kolmogorov process $(\mathcal{S}_t, \int_0^t \mathcal{S}_s \dd s)_{t>0}$, where $\mathcal{S}$ has the same distribution as the driving process.\\
The goal of the present paper is to extend the results obtained in \cite{GradinaruAsymptoticbehaviortimeinhomogeneous2021a}, where the driving process is a Brownian motion ($\alpha=2$).

\bigskip
The study of stochastic differential equations (SDEs) driven by a Lévy process is a topic of great interest (see \cite{BassStochasticdifferentialequations2003} for a survey). The $\alpha$-stable perturbation is a generalization of the Gaussian case, and it is also motivated by some Langevin-type models in stochastic climate dynamics (see \cite{DitlevsenObservationastablenoise1999}). So far, most of the papers present results about existence and uniqueness of solution, see for instance \cite{ApplebaumAsymptoticStabilityStochastic2009}, \cite{DongJumpstochasticdifferential2018}, \cite{KurenokStochasticEquationsTimeDependent2007}, \cite{Pilipenkoexistencepropertiesstrong2013}, \cite{ChenStochasticflowsLevy2018} and \cite{ChenWellposednesssupercriticalSDE2017}. The coefficients of the studied SDE are often supposed to be time-homogeneous (see for instance \cite{ApplebaumAsymptoticStabilityStochastic2009} and \cite{DongJumpstochasticdifferential2018}). Accordingly, the case of time-dependent coefficients is scarcely studied (see \cite{ChenWellposednesssupercriticalSDE2017}, \cite{KurenokStochasticEquationsTimeDependent2007} and \cite{ZhangStochasticdifferentialequations2013}). In this situation, the usual tools associated with time-homogeneous equation may no longer be invoked. \\ Furthermore, few papers (see
\cite{ApplebaumAsymptoticStabilityStochastic2009}, \cite{PriolaExponentialergodicityregularity2012}, \cite{RekerShorttimebehaviorsolutions2020}) present results about the asymptotic behavior of the solution of such SDEs. For instance, in \cite{ApplebaumAsymptoticStabilityStochastic2009} the authors give conditions for asymptotic stability of the solutions for a SDE driven by a Brownian motion and a compensated Poisson process, with coefficients that are supposed to satisfy usual global Lipschitz and growth assumptions.  In \cite{PriolaExponentialergodicityregularity2012}, the authors establish the exponential ergodicity of the solutions of a SDE driven by an $\alpha$-stable process, where the drift coefficient is supposed to be the sum of two components, one linear and the other bounded. In these papers, coefficients are time-homogeneous.
In a number of articles, the small noise influence of the solutions is analyzed. To our knowledge, the only works considering the long-time behavior are \cite{FournierOnedimensionalcritical2021}, in a time-homogeneous setting, and the present one.
\bigskip

Let us explain heuristically what the intuition of our analysis is. In long-time regime, we observe three schemes, depending on the balance between the space and time coefficients of the drift function with respect to $\alpha$, the parameter of stability of the driving process. 
When the drag force is sufficiently ``small at infinity'', the convergence towards the Kolmogorov process $(\mathcal{S},\int \mathcal{S})$ still holds.
When the two terms in the stochastic equation of the velocity process offset, we still get a kinetic process of the form $(\mathcal{V}, \int \mathcal{V})$, as limiting process. Though the process $\mathcal{V}$ no longer has the same distribution as the driving Lévy process. Alternatively, when the drift swings with the random noise, the limiting process is no longer kinetic.
\medskip

Proofs are mainly based on moment estimates and the self-similarity of the driving process.
By their scaling property, Lévy stable processes are natural extensions of the Brownian motion. However, the jump component of the Lévy noise brings difficulties. Indeed, by contrast with a Brownian motion, an $\alpha$-stable Lévy process can only have moments of order $\kappa\in [0,\alpha)$. Thus, moment estimation of the velocity process stands as a significant part of our study (see \cref{s:moment_levy}). Moment estimates of Lévy and Lévy-type processes were studied in \cite{LuschgyMomentestimatesLevy2006}, \cite{KuhnExistenceestimatesmoments2017} and \cite{DengshiftHarnackinequalities2015}. Nevertheless, the methods used can not be easily adapted to the solutions of a SDE. In fact, the key idea will be to make a non-homogeneous cutting of the jumps size of the driving process. As explained in \cite{ChaudruDeRaynalMultidimensionalstabledrivenStochastic2020} and references therein, the cutting threshold $\jsize \mapsto \jsize^{\frac{1}{\alpha}}$ makes appear integral terms satisfying the scaling property (see the proof of \cref{esperance3_levy}). \\
The proof of the critical and sub-critical cases (see \cref{main_thm_inh_levy_critique}) significantly relies on a change in both space and time, taking advantage of the scaling property of the driving process, to be close to a stationary time-homogeneous SDE, as performed in \cite{ApplebySolutionsStochasticDifferential2009} and \cite{GradinaruExistenceasymptoticbehaviour2013}. \\
In addition, extensions for the solution of the SDE driven by a general Lévy process are stated in Theorems \ref{gene_thm_levy} and \ref{gene_thm_levy2}. Let us point out that the case analyzed in \cref{ss: withBM} agrees with \citer[the equation (1) p. 1442]{DitlevsenObservationastablenoise1999}, where the described Langevin equation has two noise terms, a white noise and a pure-jump noise. To this end, we first study the asymptotic behavior of the rescaled Lévy driving process $(v_{\eps}L_{t/\eps})_{t\geq 0}$, where $v_{\eps}$ is some rate of convergence. Under appropriate assumptions on its Lévy measure, it converges in distribution, as $\eps$ goes to zero (see \cref{limit_levy} below).

\bigskip

Here is the structure of the paper. In \cref{s:main_results_stable}, we introduce some notations and state our main results. Theorems \ref{main_thm_inh_levy} and \ref{main_thm_inh_levy_critique} are extension for the $\alpha$-stable setting of the main results stated in \cite{GradinaruAsymptoticbehaviortimeinhomogeneous2021a} in case of a Brownian driving process.
We study the existence of the solution of \eqref{SKE_stable} in \cref{s:existence_levy}. In \cref{s:moment_levy}, we give estimates of the moment, which also ensure the non-explosion of the velocity process. The proofs of our main results are presented in \cref{s:proofs_levy}. Finally, \cref{s: extended} is devoted to the study of the convergence of a rescaled Lévy process and to the extension of \cref{main_thm_inh_levy}. 

\section{Notations and main results}\label{s:main_results_stable}
Let $(L_t)_{t\geq0}$ be a Lévy process. Throughout the paper, we deal with $L$ as an $\alpha$-stable Lévy process with $\alpha\in (0,2)$. We call $\nu$ its Lévy measure, given by $\nu(\dd z)= \abs{z}^{-1-\alpha}[a_+\mathbb{1}_{\{z>0\}}+a_-\mathbb{1}_{\{z<0\}}]\dd z$ with $a_+,a_-\geq 0$. As a Lévy measure, it satisfies $\int_{\mathbb{R}^*}(1\wedge z^2)\nu(\dd z)<+\infty$. By Lévy-Itô's decomposition, if $\alpha\in (0,2)$, then $L$ is a pure-jump Lévy process and there exists a Poisson point measure $N$ and its compensated Poisson measure $\widetilde{N}$ such that, for all $t\geq0$, 
\begin{equation}
   L_t= \begin{cases}
        \medint\int_0^t\medint\int_{\mathbb{R}^*}zN(\dd s, \dd z) & \mbox{ if }\alpha\in (0,1),\\
        \medint\int_0^t\medint\int_{\{0<\abs{z}<1\}}z\widetilde{N}(\dd s, \dd z)+\medint\int_0^t\medint\int_{\{\abs{z}\geq 1\}}zN(\dd s, \dd z) & \mbox{ if }\alpha=1,\\
        \medint\int_0^t\medint\int_{\mathbb{R}^*}z\widetilde{N}(\dd s, \dd z) & \mbox{ if }\alpha\in(1,2).
    \end{cases}
\end{equation}
In \cref{s: extended}, the case of a generalized Lévy driving process will be discussed.\\ 
The space of continuous functions ${\mathcal{C}}((0,+\infty), \mathbb{R})$ is endowed with the uniform topology
\[\displaystyle \dd_u:f,g\in \mathcal{C}((0,+\infty))\mapsto \sum_{n=1}^{+\infty}\dfrac{1}{2^n}\min\Big(1,\sup_{[\frac{1}{n},n]}\abs{f-g}\Big).\]
Set $\Lambda:=\{\lambda:\mathbb{R}^+ \to \mathbb{R}^+, \text{continuous and increasing function s.t. } \lambda(0)=0, \ \lim\limits_{t\to +\infty}\lambda(t)=~+\infty  \}$ and  \[k_n(t)=\begin{cases}
        1     & \mbox{if } \frac{1}{n}\leq  t\leq n, \\
        n+1-t & \mbox{if } n<t<n+1,                  \\
        0     & \mbox{if } n+1\leq t.
    \end{cases}  \]
The space of right-continuous with left limits (càdlàg) functions ${\mathcal{D}}((0,+\infty),\mathbb{R})$ is endowed with the Skorokhod topology $\dd_s$ defined for $(f, g)\in \mathcal{D}((0,+\infty),\mathbb{R})^2$ by
\[ 
    \sum_{n= 1}^{+\infty}\dfrac{1}{2^n}\min\left(1, \inf\left\{ a \mbox{ s.t. } \exists \lambda\in \Lambda, \ \sup_{s\neq t}\abs{\log \dfrac{\lambda(t)-\lambda(s)}{t-s}}\leq a, \ \sup_{t \geq\frac1n}\abs{k_n(t)\left(f\circ \lambda(t)-g(t) \right)  } \leq a \right\}\right).\]
For simplicity, we shall write $\mathcal{C}$ and $\mathcal{D}$ for $\mathcal{C}((0,+\infty), \mathbb{R})$ and $\mathcal{D}((0,+\infty),\mathbb{R})$, respectively.
\\
For a family $((Z_t^{(\eps)})_{t> 0})_{\eps>0}$ of càdlàg processes, we write
\[(Z_t^{(\eps)})_{t> 0} \quad \underset{\eps \to 0}{\Longrightarrow} \quad (Z_t)_{t>0}, \]
if $(Z_t^{(\eps)})_{t>0}$ converges in distribution to $(Z_t)_{t>0}$ in ${\mathcal{D}}$, as $\eps \to 0$.\\
We write \[(Z_t^{(\eps)})_{t> 0} \quad  \overset{\text{f.d.d.}}{\underset{\eps \to 0}{\Longrightarrow}}\quad  (Z_t)_{t>0}, \]
if for all finite subsets $S \subset (0,+\infty)$, the vector $(Z_t^{(\eps)})_{t\in S}$ converges in distribution to $(Z_t)_{t\in S}$ in $\mathbb{R}^S$, as $\eps \to 0$.\\
Let $\beta$ a non-negative real number and $F$ a continuous function satisfying
\begin{equation}\tag{$H_{\gamma}$}\label{hyp1_stable}
    \mbox{for some }\gamma \in \mathbb{R},\ \forall v \in \mathbb{R},\ \lambda>0, \ F(\lambda v)=\lambda^{\gamma}F(v).
\end{equation}
We introduce another assumption on $F$, which will sometimes be imposed in the sequel.
\begin{equation}\tag{$H_{\sgn}$}\label{hyp_sign_stable}
    \begin{gathered}
        \mbox{ When \begin{enumerate*}[label=(\roman*)]
    \item $\alpha \in (0,1]$ or
    \item $\alpha \in (1,2)$ and $\gamma \geq 1$,
\end{enumerate*}} \\ \mbox{we suppose furthermore that for all $v\in \mathbb{R}$, $vF(v)\geq 0$.}
    \end{gathered}
\end{equation}
We take an interest into the following one-dimensional stochastic kinetic model defined, for $t\geq t_0>0$, by
\begin{equation}\label{equation time_levy}\tag{\text{SKE}}
    \dd V_t= \dd L_t-t^{-\beta}F(V_{t})\dd t, \mbox{ with } V_{t_0}=v_0>0,\quad
    \mbox{and}\quad
    \dd X_t=V_t \dd t, \mbox{ with } X_{t_0}=x_0\in \mathbb{R}.
\end{equation}
In the following, $\sgn$ is the sign function with the convention that $\sgn(0)=0$. The abbreviation a.s.\ stands for almost surely. We denote by $C$ some positive constants, which may change from line to line. We use the subscripts to indicate the parameters on which it depends. For instance, $C_{t_0,\alpha}$ denotes a constant depending on the parameters $t_0$ and $\alpha$. \\
\begin{remark}\label{pi_stable}
    If a function $\pi$ satisfies \eqref{hyp1_stable}, then for all $x\in \mathbb{R}$,  $\pi(x)=\pi(\sgn(x))\abs{x}^{\gamma}$.
\end{remark}
As an example of a function satisfying \eqref{hyp1_stable}  one can keep in mind $F:v\mapsto \sgn(v)\abs{v}^{\gamma}$ (see also \cite{GradinaruExistenceasymptoticbehaviour2013}).
\\
Let us state our main results.
\begin{theo}\label{main_thm_inh_levy}
    Consider $\gamma \in (1-\frac{\alpha}{2},\alpha)$. Assume that \eqref{hyp1_stable} and \eqref{hyp_sign_stable} are satisfied, and $\beta >1+\frac{\gamma-1}{\alpha}$.
    Let $(V_t,X_t)_{t\geq t_0}$ be the solution to \eqref{equation time_levy} and $(\mathcal{S}_t)_{t\geq0}$ be an $\alpha$-stable process, having same distribution as $(L_t)_{t\geq0}$. \\Then, in the space $\mathcal{D}$,
    \begin{equation*}
        (\eps^{\frac{1}{\alpha} }V_{t/\eps}, \eps^{1+\frac{1}{\alpha}}X_{t/\eps})_{t\geq \eps t_0} \quad \underset{\eps \to 0}{\Longrightarrow}\quad \left(\mathcal{S}_{t}, \int_{0}^t\mathcal{S}_s \dd s\right)_{t>0}.
    \end{equation*} 
\end{theo}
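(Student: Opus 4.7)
The natural strategy is to pass to the rescaled velocity $V^{\varepsilon}_t:=\varepsilon^{1/\alpha}V_{t/\varepsilon}$ and to show that, under the hypothesis $\beta>1+p(\gamma)-1/\alpha$, the drift term becomes asymptotically negligible, so that $V^{\varepsilon}$ behaves as the rescaled noise, which is itself an $\alpha$-stable Lévy process by self-similarity. Setting $\tilde L_t:=\varepsilon^{1/\alpha}L_{t/\varepsilon}$, which has the same law as $L$ for every $\varepsilon$, a change of time combined with the homogeneity \eqref{hyp1_levy} applied as $F(\varepsilon^{-1/\alpha}w)=\varepsilon^{-\gamma/\alpha}F(w)$ yields
\[
V^{\varepsilon}_t = \varepsilon^{1/\alpha}v_0 + (\tilde L_t-\tilde L_{\varepsilon t_0}) - \varepsilon^{\delta}\int_{\varepsilon t_0}^{t} s^{-\beta}F(V^{\varepsilon}_s)\,\dd s,\qquad \delta:=\beta-1+\frac{1-\gamma}{\alpha}.
\]

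The heart of the argument is then to prove that this residual drift vanishes in probability, uniformly on any compact sub-interval of $(0,+\infty)$. Since $|F(v)|\leq |F(\sgn v)|\,|v|^{\gamma}$ by \cref{pi}, Fubini's theorem reduces the question to bounding $\int_a^b s^{-\beta}\,\mathbb{E}[\,|V^{\varepsilon}_s|^{\gamma}\,]\,\dd s$, hence to moment estimates on $V$. The estimates obtained in \cref{s:moment} should provide polynomial growth in $t$ calibrated so that, once combined with the factor $\varepsilon^{\delta}$ and the scaling $V^{\varepsilon}_s=\varepsilon^{1/\alpha}V_{s/\varepsilon}$, the remaining power of $\varepsilon$ is positive exactly when $\beta>1+p(\gamma)-1/\alpha$; this is where the three cases in the definition of $p(\gamma)$ should appear. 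The sign condition \eqref{hyp_sign} is what makes these moment estimates available in the regimes $\alpha<1$ or $\gamma\geq 1$, where dissipation must come from the drift rather than from the martingale part.

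Once the drift is controlled, Markov's inequality yields $V^{\varepsilon}-\tilde L\to 0$ uniformly on compact sub-intervals of $(0,+\infty)$ in probability; since $\tilde L\stackrel{d}{=}\mathcal{S}$ for all $\varepsilon$ and $\varepsilon^{1/\alpha}v_0\to 0$, this gives the Skorokhod convergence $V^{\varepsilon}\Rightarrow\mathcal{S}$ in $\mathcal{D}$. For the position component, the identity
\[
\varepsilon^{1+1/\alpha}X_{t/\varepsilon}=\varepsilon^{1+1/\alpha}x_0+\int_{\varepsilon t_0}^{t}V^{\varepsilon}_u\,\dd u,
\]
together with the continuity of the integration functional $f\mapsto (f,\int_0^{\cdot}f)$ on the set of paths whose primitive is continuous (which is the case of the limit $\mathcal{S}$) and Slutsky's lemma, produces the desired joint convergence towards $(\mathcal{S}_t,\int_0^t\mathcal{S}_s\,\dd s)$.

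The main obstacle is precisely the moment-control step: not only the existence of the required sub-$\alpha$ moments, but above all their exact polynomial rate in $t$. The piecewise shape of $p(\gamma)$ suggests that the argument bifurcates according to $\gamma$: for $\gamma\in[0,1)$ the scaling exponent $\gamma/\alpha$ inherited from the $\alpha$-stable noise should suffice, whereas for $\gamma\geq 1$ the additional contribution $\gamma/2$ seems to reflect a quadratic-variation-type argument on a higher-order moment, and the interplay between these two kinds of estimates will have to be handled carefully.
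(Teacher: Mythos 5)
Your plan follows essentially the same route as the paper's proof: rescale the velocity, use self-similarity of the $\alpha$-stable noise, bound the residual drift via the homogeneity \eqref{hyp1_levy} and the moment estimates $\mathbb{E}[\abs{V_t}^{\gamma}]\leq C t^{p(\gamma)}$ of \cref{s:moment}, so that the exponent of $\eps$ is positive precisely when $\beta>1+p(\gamma)-\frac{1}{\alpha}$, then pass from uniform-on-compacts convergence in probability to Skorokhod convergence and recover the position component by a continuous-mapping argument. This is exactly the paper's argument (including the $\min$ of exponents coming from the time integral), so the proposal is correct in approach and structure.
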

\begin{remark}
    \cref{main_thm_inh_levy} is also true when the following hypothesis holds instead of \eqref{hyp1_stable}.
    \begin{equation}\tag{$H^{\gamma}_{\text{bis}}$}\label{hyp2_levy}
    \begin{gathered}
        F \mbox{ is such that \eqref{equation time_levy} has a unique solution up to explosion and } \\ \abs{F}\leq G\mbox{ where }G \mbox{ is a positive function satisfying \eqref{hyp1_stable}.}
    \end{gathered}
\end{equation}
For instance, the function $F:v\mapsto \frac{v}{(1+v^2)}$ (see also \cite{FournierOnedimensionalcritical2021}) satisfies \eqref{hyp2_levy} (with $\gamma=0$).
\end{remark}
\begin{theo}\label{main_thm_inh_levy_critique}
    Consider $\gamma \in (1-\frac{\alpha}{2},\alpha)$ and $\beta=1+\frac{\gamma-1}{\alpha}$. Assume that \eqref{hyp1_stable} and \eqref{hyp_sign_stable} are satisfied. Let $(V_t,X_t)_{t\geq t_0}$ be the solution to \eqref{equation time_levy}.
    \\
    Call $\widetilde{H}$ the ergodic process solving the following SDE driven by an $\alpha$-stable process $(R_t)_{t\geq0}$ with same distribution as $L$ and starting at its invariant measure,
    \begin{equation*}
        \dd H_s=\dd R_s-\dfrac{H_s}{\alpha}\dd s-F\big(H_s\big)\dd s.
    \end{equation*}
    We denote by $\Lambda_{F, t_1, \cdots, t_d}$ the finite-dimensional distributions of $\widetilde{H}$. We call $(\mathcal{V}_t)_{t\geq0}$ the process having as finite-dimensional distributions the pushforward measure of $\Lambda_{F,\log(t_1), \cdots, \log(t_d)}$ by the linear map $T(u_1, \cdots, u_d):= (t_1^{\nicefrac{1}{\alpha}}u_1, \cdots, t_d^{\nicefrac{1}{\alpha}}u_d)$. 
    \\Then, under \eqref{hyp1_stable}, the following convergence, in the space $\mathcal{D}$, holds
    \begin{equation*}
        (\eps^{\frac{1}{\alpha}}V_{t/\eps}, \eps^{1+\frac{1}{\alpha}}X_{t/\eps})_{t\geq \eps t_0}\quad \underset{\eps \to 0}{\Longrightarrow} \quad\left(\mathcal{V}_{t}, \int_{0}^t\mathcal{V}_s \dd s\right)_{t>0}.
    \end{equation*} 
\end{theo}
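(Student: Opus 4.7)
The plan is to perform a logarithmic time change that removes the time-inhomogeneity of the drift. Setting $s := \log(t/t_0)$ and
\[H_s := (t_0e^s)^{-\nicefrac{1}{\alpha}}\, V_{t_0 e^s}, \qquad s \geq 0,\]
the semimartingale Itô formula together with the homogeneity relation $F(t^{\nicefrac{1}{\alpha}} h) = t^{\gamma/\alpha} F(h)$ and the critical balance $\beta = 1+\frac{\gamma-1}{\alpha}$ make the time-dependent coefficient in front of $F$ disappear, and produce
\[\dd H_s = \dd R_s - \frac{1}{\alpha}\,H_s\,\dd s - F(H_s)\,\dd s, \qquad H_0 = v_0/t_0^{\nicefrac{1}{\alpha}},\]
where $R_s := \int_{t_0}^{t_0 e^s} \sigma^{-\nicefrac{1}{\alpha}}\, \dd L_\sigma$. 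A direct Jacobian computation applied to the Poisson random measure of $L$ shows that the jump intensity of $R$ is the product $\dd s \otimes \nu(\dd w)$, so $R$ is again an $\alpha$-stable Lévy process; the small-jump compensator can be checked to match. The equation above is therefore the defining SDE of $\widetilde{H}$, but started from the deterministic point $v_0/t_0^{\nicefrac{1}{\alpha}}$ instead of the invariant law.

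The next step is to prove ergodicity of this equation, i.e.\ that the law of $H_s$ converges to that of $\widetilde{H}_0$ as $s\to\infty$. The linear mean-reverting term $-H/\alpha$ provides confinement, and \eqref{hyp_sign} (when applicable) guarantees that $F$ does not oppose it; when \eqref{hyp_sign} is not required, sub-linearity of $F$ lets the linear drift dominate at infinity. Existence and uniqueness of an invariant measure, together with a strong Feller and irreducibility property for the $\alpha$-stable driven SDE, can be obtained along the lines of \cite{PriolaExponentialergodicityregularity2012}. The moment estimates of \cref{s:moment}, transferred through the time change, supply uniform-in-$s$ bounds on $\mathbb{E}\abs{H_s}^{\kappa}$ for $\kappa\in[0,\alpha)$, which play the role of a Lyapunov function.

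Combining the time change with ergodicity yields the finite-dimensional convergence. Since
\[\eps^{\nicefrac{1}{\alpha}} V_{t/\eps} = t^{\nicefrac{1}{\alpha}}\, H_{\log(t/t_0) - \log \eps},\]
for $t_1<\cdots<t_d$ all the shifted times $\log(t_i/t_0) - \log \eps$ tend to $+\infty$ while their mutual differences remain fixed. Ergodicity combined with the Markov property and the stationarity of $\widetilde{H}$ gives
\[\bigl(H_{\log(t_i/t_0) - \log \eps}\bigr)_{i=1,\dots,d} \;\Longrightarrow\; \bigl(\widetilde{H}_{\log(t_i/t_0)}\bigr)_{i=1,\dots,d},\]
so that $(\eps^{\nicefrac{1}{\alpha}} V_{t_i/\eps})_{i=1,\dots,d}$ converges in distribution to $(t_i^{\nicefrac{1}{\alpha}} \widetilde{H}_{\log(t_i/t_0)})_{i=1,\dots,d} = (\mathcal{V}_{t_i})_{i=1,\dots,d}$, exactly as prescribed by the pushforward definition of $\mathcal{V}$. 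The joint convergence with the position component follows from the identity
\[\eps^{1+\nicefrac{1}{\alpha}} X_{t/\eps} = \eps^{1+\nicefrac{1}{\alpha}} x_0 + \int_{\eps t_0}^{t} \eps^{\nicefrac{1}{\alpha}} V_{u/\eps}\, \dd u,\]
and a dominated-convergence argument based on the uniform moment bounds of the rescaled velocity.

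The main obstacle is upgrading the finite-dimensional convergence to functional convergence in $\mathcal{D}$ when $\gamma<1$. This requires tightness of $(\eps^{\nicefrac{1}{\alpha}} V_{\cdot/\eps})_\eps$, which is most naturally established on the level of $H$ via Aldous's criterion: large jumps are controlled through the Lévy measure of $R$, and oscillations between jumps through the confining drift and the stationary moment bounds. The condition $\gamma<1$ is precisely what keeps $F$ sub-linear, so that the modulus-of-continuity estimates for $H$ remain under control uniformly in $s$; for $\gamma\geq 1$ the super-linear drift obstructs any such uniform bound, which is the structural reason only f.d.d.\ convergence is asserted.
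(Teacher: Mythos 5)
Your proposal follows essentially the same route as the paper: the logarithmic/exponential time change yielding the time-homogeneous SDE for $H$ driven by the $\alpha$-stable noise $R$, ergodicity of that equation (the paper cites Kulik's exponential ergodicity criterion where you invoke Priola-type arguments), the stationary limit of the time-shifted marginals giving the f.d.d.\ convergence through the pushforward by $T$, and Aldous's tightness criterion when $\gamma<1$. The only minor deviations — applying Aldous to $H$ rather than directly to $(\eps^{\nicefrac{1}{\alpha}}V_{t/\eps})$ as the paper does, and the slightly overstated claim that the transferred moment bounds $\mathbb{E}\abs{H_s}^{\kappa}$ are uniform in $s$ for all $\kappa\in[0,\alpha)$ (this holds only when $p_{\alpha}(\gamma,\kappa)=\frac{\kappa}{\alpha}$, e.g.\ $\kappa\leq 1$ and $\gamma<1$, which is all the tightness step needs) — do not alter the substance of the argument.
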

\begin{theo}\label{sub-critical_levy}
	Consider $\alpha>1$, $\gamma \in [1,\alpha)$ and $\beta<1+\frac{\gamma-1}{\alpha}$. Assume that \eqref{hyp1_stable} and \eqref{hyp_sign_stable} are satisfied. Let $(V_t,X_t)_{t\geq t_0}$ be the solution to \eqref{equation time_levy}. 
    Define $q:= \frac{\beta}{\alpha+\gamma-1}<\frac{1}{\alpha}$.
	Call $\widehat{H}$ the ergodic process solving the following SDE driven by an $\alpha$-stable process $(R_t)_{t\geq0}$ with same distribution as $L$ and starting at its invariant measure, 
	\begin{equation*}
		\dd H_s=\dd R_s-F\left(H_s\right)\dd s.
	\end{equation*} Call $\Pi_F$ its invariant measure. We call $(\mathscr{V}_t)_{t\geq0}$ the process whose finite dimensional distribution are $T*\left( \Pi_{F}^{\otimes d}\right)$: the pushforward measure of $ \Pi_{F}^{\otimes d}$ by the linear map $T(u_1, \cdots, u_d):=({t_1}^qu_1, \cdots, {t_d}^qu_d)$. \\
	Then,
	\begin{equation*}
		\left(\eps^{q}V_{t/\eps}\right)_{t\geq \eps t_0}\quad \overset{\text{f.d.d.}}{\underset{\eps \to 0}{\Longrightarrow}}\quad \left(\mathscr{V}_{t}\right)_{t\geq0}.
		\end{equation*}
\end{theo}
\begin{remark}
    As we will see in \cref{s:existence_levy}, the assumption $\gamma >1-\frac{\alpha}{2} $ is needed in order to obtain the existence up to explosion of the solution under hypothesis \eqref{hyp1_stable}.
\end{remark}
\begin{remark}\label{explosion_time_levy}
    Let us point out that, during the proof of \cref{main_thm_inh_levy} and \cref{main_thm_inh_levy_critique}, we employ some moment estimates for the solution $V$. We state the estimates below. \\
    Assume that \eqref{hyp_sign_stable} is satisfied. We suppose also that the hypothesis on the sign of $F$ holds for $(\alpha,\gamma,\kappa)\in(1,2)\times [0,1]\times (1,\alpha)$. Then, for any $\alpha\in (0,2), \ \gamma \in \mathbb{R}, \ \beta\in \mathbb{R}$ and $\kappa\in[0,\alpha)$, there exists a constant $C_{\gamma,\kappa,\beta,t_0}$ such that,
    \begin{equation*}
        \forall t\geq  t_0, \ \mathbb{E}\left[ \abs{V_{t}}^{\kappa} \right]\leq C_{\gamma,\kappa,\beta,t_0}t^{\frac{\kappa}{\alpha}}.
    \end{equation*}
Note that the above bounds are the best possible, taking $F=0$. 
\end{remark}

\section{Existence up to explosion} \label{s:existence_levy}
In this section, we study the existence of the solution to \eqref{equation time_levy} up to explosion time.
\begin{remark}\label{Holder_levy}
    Assume that \eqref{hyp1_stable} holds. Pick $\beta\geq0$. If $0<\gamma<1$, the function $x\mapsto F(x)t^{-\beta}$ is $\gamma$-Hölder and if $\gamma\geq 1$, it
    is locally Lipschitz. 
\end{remark}

\begin{prop}\label{existence hyp1_stable}
    Assume that \eqref{hyp1_stable} is satisfied. There exists a pathwise unique strong solution to \eqref{equation time_levy}, defined up to the explosion time, provided that
    \begin{enumerate}[label=(\roman*)]
        \item $1-\frac{\alpha}{2}<\gamma<1$ and $\beta \geq 0$ when $\alpha\in(0,2)$.
        \item $\gamma\geq 1$ when $\alpha>1$.
    \end{enumerate}
\end{prop}
\begin{proof}
    If $\gamma\in(0,1)$, the drift coefficient is $\gamma$-Hölder (see \cref{Holder_levy}) and locally bounded, thereby the conclusion of the first point follows from \citer[Remark 1.3]{ChenWellposednesssupercriticalSDE2017}.\\
    Assume now that $\alpha>1$ and $\gamma\geq1$. The drift coefficient is locally Lipschitz (see \cref{Holder_levy}) and locally bounded, so we can apply \citer[Lemma 115 p. 78]{SituTheoryStochasticDifferential2005} 
    to get the pathwise uniqueness. Thanks to \citer[Theorem 137 p. 104]{SituTheoryStochasticDifferential2005}, it suffices to prove that there exists a weak solution. 
    \\
    The drift coefficient is continuous with respect to its two variables, so it is a locally bounded and measurable function.
    By a standard localization argument, using \citer[Theorem 9.1 p. 231]{IkedaStochasticDifferentialEquations1981}, since the drift coefficient is locally Lipschitz, there is a unique solution defined up to explosion. 
\end{proof}

\section{Moment estimates and non-explosion of the velocity process} \label{s:moment_levy}

In this section, we present estimates on moments of the velocity process $V$ solution to \eqref{equation time_levy}. This will be useful to conclude of the non-explosion of solution to \eqref{equation time_levy} with \cref{explosion_levy}, and to control some terms appearing along the proofs of Theorem \ref{main_thm_inh_levy} and \ref{main_thm_inh_levy_critique} in \cref{s:proofs_levy}.\\
Let $V$ be the unique solution up to explosion time to \eqref{equation time_levy}.
For all $r\geq0$, define the stopping time 
\begin{equation}\label{stopping_time_levy}
    \tau_r:=\inf \{ t\geq t_0, \ \abs{V_t}\geq r\}.
\end{equation}
Set $\tau_{\infty}:= \lim_{r\to +\infty}\tau_r$ the explosion time of $V$.\\
We give first a sufficient condition for the non-explosion of a general process.
\begin{lem}\label{explosion_levy} Let $(Y_t)_{t\geq t_0}$ be a càdlàg process and $\tau_{\infty}$ its explosion time.
    Assume that there exist two measurable and non-negative functions $\phi$ and $b$ such that
    \begin{enumerate}[label=(\roman*)]
        \item $\phi$ is non-decreasing and $\lim_{r\to \infty}\phi(r)=+\infty$,
        \item $b$ is finite-valued,
        \item and for all $t\geq t_0$,
              \begin{equation}\label{borne_explosion_levy}
                  \sup_{r\geq0}\mathbb{E}\left[\phi(\abs{Y_{t\wedge \tau_{r}}})\right] \leq b(t).
              \end{equation}
    \end{enumerate}
    Then $\tau_{\infty}=+\infty$ a.s.
\end{lem}
\begin{proof}
    Pick $t\geq t_0$. Using the definition of $\tau_r$, the monotony of $\phi$ and \textit{(iii)}, we get, for all $r\geq0$,
    \begin{equation*}
        \phi(r)\mathbb{P}(\tau_r\leq t)\leq \mathbb{E}\left[ \phi\left(Y_{\tau_r}\right)\mathbb{1}_{\{\tau_r\leq t\} }\right]\leq \mathbb{E}\left[ \phi\left(Y_{\tau_r}\right)\right]\leq b(t).
    \end{equation*}
    Thus, by Fatou's lemma, 
 \begin{equation*}
     0\leq \mathbb{P}\left(\tau_{\infty}\leq t \right)\leq \liminf_{r\to \infty}\mathbb{P}(\tau_r\leq t) \leq b(t)\lim_{r\to \infty}\dfrac{1}{\phi(r)}=0.
 \end{equation*}
 As a consequence, 
 \[0\leq \mathbb{P}\left(\tau_{\infty}<+\infty \right) \leq \sum_{t\in \mathbb{Q}}\mathbb{P}\left(\tau_{\infty}\leq t \right) =0. \]
This concludes the proof.
\end{proof}
We will show that there exists a constant $C_{\gamma,\kappa,\beta,t_0}$ such that \begin{equation}\label{eq: moment def p}
    \forall t\geq  t_0, \ \mathbb{E}\left[ \abs{V_{t}}^{\kappa} \right]\leq C_{\gamma,\kappa,\beta,t_0}t^{\frac{\kappa}{\alpha}}.
\end{equation}
 
\begin{prop}\label{esperance_levy} Pick $\alpha\in (0,1)$. Assume that \eqref{hyp_sign_stable} holds. Recall that $(V_t)_{t\geq t_0}$ is the solution of \eqref{equation time_levy}.  For any $\gamma$, $\beta$, the explosion time $\tau_{\infty}$ is a.s.\ infinite and for all $\kappa \in [0,\alpha)$, there exists a constant $C_{\kappa,t_0}$ such that, we have
    \begin{equation}\label{eq:moment_alpha_1 }
        \forall t\geq  t_0, \ \mathbb{E}\left[ \abs{V_{t}}^{\kappa} \right]\leq C_{\kappa,t_0}t^{\frac{\kappa}{\alpha}}.
    \end{equation}
\end{prop}
\begin{proof}
    Fix $t\geq t_0$. Since $\alpha<1$, the stable process can be written as 
    \[L_t = \int_0^t\int_{\mathbb{R}^*}zN(\dd s, \dd z) =  \sum_{s\leq t}\Delta L_s.\]
    Fix $\kappa \in [0,\alpha)$.
    Pick the sequence of $\mathcal{C}^2$-functions $f_n:x\mapsto \sqrt{x^2+\frac{1}{n}}$, which converges uniformly to $x\mapsto \abs{x}$ on $\mathbb{R}$.
    Then, for all $n\geq1$, we apply Itô's formula (see \citer[Theorem 32 p. 78]{ProtterStochasticIntegrationDifferential2005}) to get
    \[
        \begin{aligned}
            f_n(V_{t\wedge \tau_{r}}) & =f_n(v_0)-\int_{t_0}^{t\wedge \tau_{r}} f_n'(V_{s})F(V_s)s^{-\beta}\dd s + \int_{t_0}^{t\wedge \tau_{r}}\int_{\mathbb{R}^*}\left(f_n(V_{s-}+z)-f_n(V_{s-})\right)N(\dd s, \dd z) \\ &\leq f_n(v_0)+\sum_{s\leq {t\wedge \tau_{r}}} (f_n(V_{s-}+\Delta L_s)-f_n(V_{s-})). 
        \end{aligned}
        \]
        The term $\int_{t_0}^{t\wedge \tau_{r}} f_n'(V_s)F(V_s)s^{-\beta}\dd s$ is non-negative, since \eqref{hyp_sign_stable} holds. \\Hence, the previous equation can be written as
        \[f_n(V_{t\wedge \tau_{r}}) \leq f_n(v_0)+\sum_{s\leq {t\wedge \tau_{r}}} (f_n(V_{s})-f_n(V_{s-})).\]
    Since $\|f_n'\|_{\infty}\leq 1$, we deduce that $(f_n(V_s)-f_n(V_{s-}))\leq \abs{\Delta V_s}= \abs{\Delta L_s}$, hence,
    \[\abs{V_{t\wedge \tau_{r}}}\leq f_n(V_{t\wedge \tau_{r}})\leq f_n(v_0)+ \sum_{s\leq {t\wedge \tau_{r}}} \abs{\Delta L_s}. \]
    Furthermore, since $\kappa<\alpha<1$, we have
    \[\abs{V_{t\wedge \tau_{r}}}^{\kappa}\leq f_n(v_0)^{\kappa}+\left(\sum_{s\leq {t\wedge \tau_{r}}} \abs{\Delta L_s}\right)^{\kappa}. \]
    Taking the expectation, we get
    \[\mathbb{E}\left[\abs{V_{t\wedge \tau_{r}}}^{\kappa}\right]\leq \mathbb{E}\left[f_n(v_0)^{\kappa}\right]+ \mathbb{E}\left[\left(\sum_{s\leq t} \abs{\Delta L_s}\right)^{\kappa}\right]. \]
    Notice that the process $L_t^+:=\sum_{s\leq t} \abs{\Delta L_s}$ is an $\alpha$-stable process.
    Then, since $\kappa<\alpha$, letting $n\to +\infty$, we obtain
    \[ \mathbb{E}\left[\abs{V_{t\wedge \tau_{r}}}^{\kappa}  \right] \leq  \abs{v_0}^{\kappa}+ \mathbb{E}\left[ \abs{L_t^+}^{\kappa}\right] \leq C_{t_0,\kappa} t^{\frac{\kappa}{\alpha}}. \]
    Thanks to \cref{explosion_levy}, we can conclude that the explosion time of $V$ is a.s.\ infinite, and \eqref{eq:moment_alpha_1 } follows, letting $r\to \infty$.
\end{proof}

\begin{prop}\label{esperance2_levy} Pick $\alpha \in (1,2)$. Recall that $(V_t)_{t\geq t_0}$ is the solution of \eqref{equation time_levy}. For any $\gamma \in [0,1)$ and any $ \beta\in \mathbb{R}$, the explosion time $\tau_{\infty}$ is a.s.\ infinite and for all $\kappa \in [0,1]$, there exists $C_{\gamma,\kappa,\beta,t_0}$ such that we have
    \begin{equation}\label{eq: moment 1_alpha gamma_1}
        \forall t\geq  t_0, \ \mathbb{E}\left[ \abs{V_{t}}^{\kappa} \right]\leq C_{\gamma,\kappa,\beta,t_0}
        \begin{cases}t^{\frac{\kappa}{\alpha}} & \mbox{if }\frac{\gamma-1}{\alpha}+1\leq \beta, \\
            t^{\kappa\frac{1-\beta}{1-\gamma}} & \mbox{else.}
        \end{cases}
    \end{equation}
\end{prop}
\begin{remark}
    If $\frac{\gamma-1}{\alpha}+1>\beta$, then the moment estimate \eqref{eq: moment 1_alpha gamma_1} becomes \[\forall t \geq t_0, \ \mathbb{E}\left[ \abs{V_{t}}  \right]\leq C_{\gamma, \beta, t_0}t^{\frac{1-\beta}{1-\gamma}}.\]
\end{remark}
\begin{proof}[Proof of \cref{esperance2_levy}]
    Assume that $\gamma\in [0,1)$ and fix $\kappa \in [0,1]$. Then Jensen's inequality yields, for all $t\geq t_0$, $\mathbb{E}\left[\abs{V_{t}}^\kappa \right]\leq \mathbb{E}\left[\abs{V_{t}}\right]^\kappa$, hence it suffices to verify \eqref{eq: moment 1_alpha gamma_1} only for $\kappa=1$.\\
    Recall that under \eqref{hyp1_stable}, there exists a positive constant $K$, such that for all $v\in \mathbb{R}$, $\abs{F\left(v\right)}\leq K\abs{v}^{\gamma}$. Hence, we can write, for any $t\geq t_0$ and $r\geq0$,

    \[\begin{aligned}
            \abs{V_{(t \wedge \tau_r)-}} & \leq \abs{v_0-L_{t_0}}+\abs{L_{(t \wedge \tau_r)-}}+\int_{t_0}^{t \wedge \tau_r}s^{-\beta}\abs{F(V_{s \wedge \tau_r}) }\dd s \\ &\leq \abs{v_0-L_{t_0}}+\abs{L_{(t \wedge \tau_r)-}}+K\int_{t_0}^{t \wedge \tau_r}s^{-\beta}\abs{V_{s \wedge \tau_r} }^{\gamma}\dd s.
        \end{aligned}\]
    Since $L$ is an $\alpha$-stable process, it has a finite first moment, which can be computed. Taking the expectation in the above inequality, we get, by choosing $C_{t_0}$ big enough,
    \[\begin{aligned}
            \mathbb{E}\left[ \abs{V_{(t \wedge \tau_r)-}}\right] & \leq \mathbb{E}\left[\abs{v_0-L_{t_0}} \right] +\mathbb{E}\left[\abs{L_{(t \wedge \tau_r)-}} \right] +K\int_{t_0}^{t}s^{-\beta}\mathbb{E}\left[\abs{V_{s \wedge \tau_r} }^{\gamma}\right]\dd s \\ 
            & \leq C_{t_0}t^{\frac{1}{\alpha}}+K\int_{t_0}^{t}s^{-\beta}\mathbb{E}\left[\abs{V_{s \wedge \tau_r} }\right]^{\gamma}\dd s.
        \end{aligned} \]
    Recalling that $\tau_r$ is given by \eqref{stopping_time_levy}, the function $g_r:t\mapsto\mathbb{E}\left[ \abs{V_{(t \wedge \tau_r)-}} \right]$ is bounded by $r$.
    Applying a Grönwall-type lemma (see \cref{gronwall_levy}), we end up, for $\beta \neq 1$, with
    \[ \forall t \geq t_0, \ \mathbb{E}\left[ \abs{V_{(t\wedge \tau_r)-}}  \right]  \leq C_{\gamma} \left[C_{t_0}t^{\frac{1}{\alpha}}+\left(\dfrac{1-\gamma}{1-\beta}K(t^{1-\beta}-t_0^{1-\beta})\right)^{\frac{1}{1-\gamma}}  \right]. 
    \]
    The case $\beta=1$ can be treated similarly.
    Thanks to \cref{explosion_levy}, we conclude that the explosion time of $V$ is a.s.\ infinite, and \eqref{eq: moment 1_alpha gamma_1} follows from Fatou's lemma.
\end{proof}

\begin{prop}\label{esperance3_levy} Pick $\alpha\in [1,2)$. Assume here that for all $v\in \mathbb{R}$, $vF(v)\geq 0$.  Recall that $(V_t)_{t\geq t_0}$ is the solution of \eqref{equation time_levy}. For any $\gamma\in \mathbb{R}$ and any $ \beta\in \mathbb{R}$, the explosion time $\tau_{\infty}$ is a.s.\ infinite and there exists $C_{\kappa,t_0}$ such that
    \begin{equation}\label{eq: moment 1_alpha_levy}
        \text{for }\kappa\in(0,\alpha), \ \forall t\geq t_0,\ \mathbb{E}\left[\abs{V_{t}}^{\kappa}\right]\leq C_{\kappa,t_0}t^{\frac{\kappa}{\alpha}}.
    \end{equation}
\end{prop}

\begin{proof}
    The key idea is to slice the small and big jumps in a non-homogeneous way with respect to the function $\jsize \mapsto \jsize^{\frac{1}{\alpha}}$.  We write the proof in the general setting of $\alpha\in (1,2)$. When $\alpha=1$, the proof is similar since $\nu$ is symmetric.\\ Pick $\jsize\geq t_0$. As explained in \cite{ChaudruDeRaynalMultidimensionalstabledrivenStochastic2020} and references therein, by using this cutting threshold, the $\alpha$-stable Lévy driving process can be written as 
\[L_t-L_{t_0} = \int_{t_0}^t\int_{\abs{z}\leq \jsize^{\frac{1}{\alpha}}}z\widetilde{N}(\dd s, \dd z) +\int_{t_0}^t\int_{\abs{z}> \jsize^{\frac{1}{\alpha}}}z N(\dd s, \dd z) - \int_{t_0}^t\int_{\abs{z}> \jsize^{\frac{1}{\alpha}}} z\nu(\dd z)\dd s. \]
The two first integrals then satisfy the same scaling property as the $\alpha$-stable Lévy driving process.\\
We can compute
\[\int_{\abs{z}> \jsize^{\frac{1}{\alpha}}} z\nu(\dd z) =  \frac{a_+-a_-}{\alpha-1}\jsize^{\frac{1}{\alpha}-1}. \]
    \textsc{Step 1.} We first apply Itô's formula and estimate the expectation of each term for $\kappa\leq 1$, in order to get \eqref{eq: moment 1_alpha_levy}.\\
    Fix $\eta>0$ and define the $\mathcal{C}^2$-function $f:v\mapsto (\eta+v^2)^{\kappa/2}$.
    For all $t\geq t_0$, by Itô's formula, using that for all $v\in \mathbb{R}$, $vF(v)\geq 0$, we have
    \begin{equation}\label{eq:moment_Ito_levy}
        f(V_{t\wedge \tau_r})\leq f(V_0)-\frac{a_+-a_-}{\alpha-1}\jsize^{\frac{1}{\alpha}-1}\int_{t_0}^{t}\mathbb{1}_{\{s\leq \tau_r\} } f'(V_s)\dd s +M_{t}+R_{t}+S_{t},
    \end{equation}
    where
    \begin{equation}\label{eq: martingale_levy}
        M_t := \int_{t_0}^t \int_{0<\abs{z}<\jsize^{\frac{1}{\alpha}}}\mathbb{1}_{\{s\leq \tau_r\} } \left[f(V_{s-}+z)-f(V_{s-})\right] \widetilde{N}(\dd s, \dd z),
    \end{equation}
    \begin{equation}\label{eq: jumps_levy}
        R_t := \int_{t_0}^t \int_{\abs{z}\geq \jsize^{\frac{1}{\alpha}}}\mathbb{1}_{\{s\leq \tau_r\} } \left[f(V_{s-}+z)-f(V_{s-})\right]) N(\dd s, \dd z),
    \end{equation}
    \begin{equation}\label{eq: finite variation_levy}
        S_t := \int_{t_0}^t \int_{0<\abs{z}<\jsize^{\frac{1}{\alpha}}}\mathbb{1}_{\{s\leq \tau_r\} } \left[f(V_s+z)-f(V_s)-zf'(V_s)\right] \nu(\dd z) \dd s.
    \end{equation}
    Note that, since $\kappa<1$, for all $v\in \mathbb{R}$,
    \begin{equation}\label{eq: derivee_f_levy}
        \abs{f'(v)}\leq \kappa \eta^{\frac{\kappa-1}{2}}.
    \end{equation}
    Moreover, remark that for all $k>\alpha$,
    \begin{equation}\label{eq: levy_measure_petits_levy}
        \int_{0<\abs{z}<\jsize^{\frac{1}{\alpha}}} \abs{z}^{k} \nu(\dd z )  =  \frac{a_++a_-}{k-\alpha}\jsize^{\frac{k}{\alpha}-1},
    \end{equation}
    and for all $k<\alpha$,
    \begin{equation}\label{eq: levy_measure_ grands_levy}
        \int_{\abs{z}\geq\jsize^{\frac{1}{\alpha}}} \abs{z}^{k} \nu(\dd z )  =  \frac{a_++a_-}{\alpha-k}\jsize^{\frac{k}{\alpha}-1}.
    \end{equation}
    We estimate expectations of $M$, $R$ and $S$.\\
    To that end, we first show that the local martingale $(M_{t})_{t\geq t_0}$ is a martingale. 
    \\Fix $q\geq 2$ and $r\geq0$.
    Set 
    \[ I_t(q) := \int_{t_0}^{t} \int_{0<\abs{z}<\jsize^{\frac{1}{\alpha}}}\mathbb{1}_{\{s\leq \tau_r\} } \abs{f(V_{s-}+z)-f(V_{s-})}^q \nu(\dd z )\dd s.\]  Notice that, since for all $\abs{v}\leq r$ and $\abs{z}\leq \jsize^{\frac{1}{\alpha}}$, $\abs{f(v+z)-f(v)}\leq \|f'\mathbb{1}_{[-(r+\jsize^{\frac{1}{\alpha}}), r+\jsize^{\frac{1}{\alpha}}]}\|_{\infty}\abs{z}$, so we have 
    \[ I_t(q)\leq \|f'\mathbb{1}_{[-(r+\jsize^{\frac{1}{\alpha}}), r+\jsize^{\frac{1}{\alpha}}]}\|_{\infty}^q
            \int_{t_0}^{t} \int_{0<\abs{z}<\jsize^{\frac{1}{\alpha}}} \mathbb{1}_{\{s\leq \tau_r\} } \abs{z}^{q} \nu(\dd z )\dd s. \] 
            Hence, it is a finite quantity, since $q\geq 2$ and \eqref{eq: levy_measure_petits_levy} holds.
    Therefore, for $q\geq 2$, by Kunita's inequality (see \citer[Theorem 4.4.23 p. 265]{ApplebaumLevyprocessesstochastic2009}), there exists $D_q$ such that 
    \[
        \begin{aligned}
            \mathbb{E}\left[\sup_{t_0\leq s \leq t} \abs{M_s}^q\right] \leq D_q \left(\mathbb{E}\left[ I_t(2)^{q/2} \right] +\mathbb{E}\left[ I_t(q)\right]\right) <+\infty.
        \end{aligned}
    \]
    Hence, by \citer[Theorem 51 p. 38]{ProtterStochasticIntegrationDifferential2005}, $M$ is a martingale.
    \newline
   We estimate now the finite variation part $S$ defined in \eqref{eq: finite variation_levy}. We use a similar idea as in \citer[the proof of Theorem 3.1 p. 3863]{DengshiftHarnackinequalities2015}.
    Note that for all $v\in \mathbb{R}$,
    \[\begin{aligned}
            \abs{f''(v)} & = \kappa(2-\kappa)v^2(v^2+\eta)^{\frac{\kappa}{2}-2} +\kappa (v^2+\eta )^{\frac{\kappa}{2}-1}  \\& = \kappa(2-\kappa)
            v^2(v^2+\eta)^{-1}
            (v^2+\eta)^{\frac{\kappa}{2}-1} +\kappa (v^2+\eta )^{\frac{\kappa}{2}-1} \\& \leq \kappa(3-\kappa)(v^2+\eta )^{\frac{\kappa}{2}-1} \\& \leq \kappa(3-\kappa)\eta ^{\frac{\kappa}{2}-1}, \mbox{ since $\frac{\kappa}{2}-1<0$. }
        \end{aligned}\]
    Assume that $\abs{z}<\jsize^{\frac{1}{\alpha}}$.
    Using Taylor's formula, we get a.s.\,
    \[ \abs{f(V_s+z)-f(V_s)-zf'(V_s)} \leq \frac{1}{2} \kappa(3-\kappa)\eta ^{\frac{\kappa}{2}-1}z^2.\]
    Hence, we get the almost sure following bound
    \begin{equation*}
       \abs{ \int_{0<\abs{z}<\jsize^{\frac{1}{\alpha}}} \left(f(V_s+z)-f(V_s)-zf'(V_s)\right) \nu(\dd z)} \leq \frac{1}{2} \kappa(3-\kappa)\eta ^{\frac{\kappa}{2}-1} \int_{0<\abs{z}<\jsize^{\frac{1}{\alpha}}}z^2\nu(\dd z).
    \end{equation*}
    Injecting \eqref{eq: levy_measure_petits_levy}, we get
    \begin{equation}\label{deriveeseconde_levy}
        \abs{ \int_{0<\abs{z}<\jsize^{\frac{1}{\alpha}}} \left(f(V_s+z)-f(V_s)-zf'(V_s)\right) \nu(\dd z) } \leq \frac{1}{2} \kappa(3-\kappa)\eta ^{\frac{\kappa}{2}-1}\frac{a_++a_-}{2-\alpha}\jsize^{\frac{2}{\alpha}-1} .
     \end{equation}
    It remains to study the Poisson integral $R$ defined in \eqref{eq: jumps_levy}, using \citer[Theorem 2.3.7 p. 106]{ApplebaumLevyprocessesstochastic2009}. 
    Pick $\kappa\leq 1$, by Hölder property of power functions, we can write,
    \[\begin{aligned}
            f(v+z)-f(v)&=  \left(\eta +(v+z)^2\right)^{\frac{\kappa}{2}}-\left((v+z)^2\right)^{\frac{\kappa}{2}} + \left(v+z\right)^{\kappa} - v^{\kappa} +\left(v^2\right)^{\frac{\kappa}{2}} -\left(\eta+v^2\right)^{\frac{\kappa}{2}} \\& \leq 2\eta^{\frac{\kappa}{2}}+ \abs{z}^{\kappa}.
        \end{aligned}\]
    We deduce that
    \begin{equation*}
        \int_{\abs{z}\geq \jsize^{\frac{1}{\alpha}}} f(V_s+z)-f(V_s) \nu(\dd z) \leq \eta^{\frac{\kappa}{2}} \nu(\abs{z}\geq \jsize^{\frac{1}{\alpha}})+\int_{\abs{z}\geq\jsize^{\frac{1}{\alpha}}} \abs{z}^{\kappa} \nu(\dd z).
    \end{equation*}
    Injecting \eqref{eq: levy_measure_ grands_levy}, this leads to
    \begin{equation}\label{poissonintegral_levy}
        \int_{\abs{z}\geq \jsize^{\frac{1}{\alpha}}} f(V_s+z)-f(V_s) \nu(\dd z) \leq \eta^{\frac{\kappa}{2}} \frac{a_++a_-}{\alpha }\jsize^{-1}+\frac{a_++a_-}{\alpha-\kappa}\jsize^{\frac{\kappa}{\alpha}-1}.
    \end{equation}
      Gathering \eqref{eq: derivee_f_levy}, \eqref{poissonintegral_levy} and \eqref{deriveeseconde_levy}, we get
    \begin{multline}\label{eq:moment_step1a_levy}
        \mathbb{E}\left[\abs{V_{t\wedge \tau_r}}^{\kappa}\right]\leq \mathbb{E}\left[f(V_{t\wedge \tau_r})  \right] \leq \mathbb{E}\left[f(V_{t_0})\right]+t \jsize^{-1}\times \\  \left(\kappa \eta^{\frac{\kappa-1}{2}}\frac{a_+-a_-}{\alpha-1}\jsize^{\frac{1}{\alpha}}+\eta^{\kappa/2} \frac{a_++a_-}{\alpha }+\frac{a_++a_-}{\alpha-\kappa}\jsize^{\frac{\kappa}{\alpha}}+ \frac{1}{2} \kappa(3-\kappa)\eta^{\frac{\kappa}{2}-1} \frac{a_++a_-}{2-\alpha}\jsize^{\frac{2}{\alpha}}\right).
    \end{multline}
    Choosing $\eta=t^{\frac{2}{\alpha}}$ and $\jsize=t$, we get
    \[\begin{aligned}\label{eq:moment_step1_levy}
        \mathbb{E}\left[\abs{V_{t\wedge \tau_r}}^{\kappa}\right] &\leq \mathbb{E}\left[f(V_{t_0})\right]+t^{\frac{\kappa}{\alpha}} \times   \left(\kappa \frac{a_+-a_-}{\alpha-1}+ \frac{a_++a_-}{\alpha }+\frac{a_++a_-}{\alpha-\kappa}+ \frac{1}{2} \kappa(3-\kappa) \frac{a_++a_-}{2-\alpha}\right)\\&\leq C_{\kappa, t_0}t^{\frac{\kappa}{\alpha}}.
    \end{aligned}\]
    Thanks to \cref{explosion_levy}, we can conclude that the explosion time of $V$ is a.s.\ infinite and letting $r \to +\infty$, for all $\kappa\in [0,1],$
    \begin{equation}\label{eq: kappa_1_levy}
        \mathbb{E}\left[\abs{V_{t}}^{\kappa}\right] \leq C_{\kappa, t_0}t^{\frac{\kappa}{\alpha}}.
    \end{equation}
    \\
    \textsc{Step 2.} Pick $\kappa\in (1,\alpha)$. We estimate $R$ in another way, using again \citer[Theorem 2.3.7 p. 106]{ApplebaumLevyprocessesstochastic2009}.\\ 
    By Hölder property of power function and \eqref{eq: levy_measure_ grands_levy}, we get
    \begin{equation}\begin{aligned}\label{poissonintegralbis_stable}
        \int_{\abs{z}\geq \jsize^{\frac{1}{\alpha}}} \abs{f(V_s+z)-f(V_s)} \nu(\dd z) &\leq \int_{\abs{z}\geq \jsize^{\frac{1}{\alpha}}} \abs{2zV_s+z^2}^{\frac{\kappa}{2}}\nu(\dd z)\\&\leq C_{\kappa}\left(\frac{a_++a_-}{\alpha-\kappa}\jsize^{\frac{\kappa}{\alpha}-1}+\abs{V_s}^{\frac{\kappa}{2}}\frac{a_++a_-}{\alpha-\frac{\kappa}{2}}\jsize^{\frac{\kappa}{2\alpha}-1}\right).
    \end{aligned}
\end{equation}
    Gathering \eqref{deriveeseconde_levy}, \eqref{poissonintegralbis_stable} and then using that for all $v\in \mathbb{R}$, $\abs{f'(v)}\leq \kappa \abs{v}^{\kappa-1}$, 
   
    \begin{multline}\label{eq: moment intermediaire_levy}
        \mathbb{E}\left[\abs{V_{t\wedge \tau_r}}^{\kappa}\right] \leq \mathbb{E}\left[f(V_{t_0})\right]+t\left(C_{\kappa}\frac{a_++a_-}{\alpha-\kappa}\jsize^{\frac{\kappa}{\alpha}-1}+\frac{1}{2} \kappa(3-\kappa)\eta^{\frac{\kappa}{2}-1} \frac{a_++a_-}{2-\alpha}\jsize^{\frac{2}{\alpha}-1}\right)\\+ \kappa\frac{a_+-a_-}{\alpha-1}\jsize^{\frac{1}{\alpha}-1}\int_{t_0}^{t}\mathbb{E}\left[\abs{V_s}^{\kappa-1} \right]\dd s+ C_{\kappa}\frac{a_++a_-}{\alpha-\frac{\kappa}{2}}\jsize^{\frac{\kappa}{2\alpha}-1}\int_{t_0}^{t}\mathbb{E}\left[\abs{V_s}^{\frac{\kappa}{2}} \right]\dd s.
        \end{multline}
    Injecting \eqref{eq: kappa_1_levy}, and choosing $\eta=t^{\frac{2}{\alpha}}$ and $\jsize=t$, we get
    \begin{equation*}
        \mathbb{E}\left[\abs{V_{t\wedge \tau_r}}^{\kappa}\right] \leq C_{\kappa, t_0,\alpha}t^{\frac{\kappa}{\alpha}}.
    \end{equation*}
    Taking $r\to +\infty$, \eqref{eq: moment 1_alpha_levy} follows.
\end{proof}

\begin{ex}\label{linear_levy}
    Remark that the velocity process $V$ is more explicit in the linear case ($\gamma=1$).
    Choose $F(1)=\rho>0$, $F(-1)=-\rho$. Pick $\beta\neq 1$, so
    \[V_t=v_0+\exp\left(-\rho\frac{t^{1-\beta}}{1-\beta}\right)\int_{t_0}^t\exp\left(\rho\frac{s^{1-\beta}}{1-\beta}\right)\dd L_s
        \]
        is solution of \eqref{equation time_levy}.\\
    Hence, by an integration by parts,
    \[V_t=v_0+L_t-e^{\rho\frac{1}{1-\beta}(t_0^{1-\beta}-t^{1-\beta})}L_{t_0}-e^{-\rho\frac{t^{1-\beta}}{1-\beta}}\int_{t_0}^t\rho s^{-\beta}e^{\rho\frac{s^{1-\beta}}{1-\beta}}L_s \dd s.\]
    Thus,
    \[\mathbb{E}\left[\abs{V_t} \right]\leq C_{t_0}\left(t^{\frac{1}{\alpha}}+ t^{1-\beta+\frac{1}{\alpha}}\right)\leq C_{t_0}t^{\frac{1}{\alpha}}.\]
    The case $\beta=1$ can be treated similarly.
\end{ex}
\section{Proof of the asymptotic behavior of the solution} \label{s:proofs_levy}
This section is devoted to the proofs of our main results, Theorems \ref{main_thm_inh_levy} and \ref{main_thm_inh_levy_critique}.\\
Notice that it suffices to prove the convergence of the rescaled velocity process $(\eps^{\frac{1}{\alpha}}V_{t/\eps})_{t\geq \eps t_0}$ in the space ${\mathcal{D}}$ endowed with the Skorokhod topology. Assume for a moment that this convergence is proved.\\
For $\eps \in (0,1]$ and $t\geq \eps t_0$ 
we can write
\[\eps^{1+\frac{1}{\alpha}}X_{t/\eps} = \eps^{1+\frac{1}{\alpha}} x_0 +\int_{\eps t_0}^{t}V_s^{(\eps)} \dd s. \]
Let us introduce the mapping $g_{\eps}: V\mapsto \left(V_t, \int_{\eps t_0}^{t}V_s \dd s \right)_{t>0}$ defined and valued on ${\mathcal{D}}$.
Clearly, the theorem will be proved once we show that $g_{\eps}(V^{(\eps)}_{\bullet})$ converges weakly  in ${\mathcal{D}}$ endowed by the Skorokhod topology.
 This mapping is converging, as $\eps\to 0$, to the continuous mapping $g:V\mapsto \left(V_t, \int_{0}^{t}V_s \dd s \right)_{t>0}$.\\
In order to see $V^{(\eps)}$ as a process of $\mathcal{D}([0,+\infty))$, let us state for all $s\in [0,\eps t_0]$, $V_s^{(\eps)}:=V_{\eps t_0}^{(\eps)}=\eps^{\frac{1}{\alpha}}v_0 $. Call $P_{\eps}$, $P$ the distribution of $V^{(\eps)}$, $\mathcal{S}$, respectively. Invoking the Portmanteau theorem (see \citer[Theorem 2.1 p. 16]{BillingsleyConvergenceprobabilitymeasures1999}), it suffices to prove that for all bounded and uniformly continuous function $h:{\mathcal{D}}([0,+\infty))\times{\mathcal{D}}([0,+\infty)) \to \mathbb{R}$,
\[\int_{\mathcal{D}([0,+\infty))^2}h(g_{\eps}(\omega))\dd P_{\eps}(\dd \omega) \quad \underset{\eps \to 0}{\longrightarrow }\quad \int_{\mathcal{D}([0,+\infty))^2}h(g(\omega))\dd P(\dd \omega).  \]
Pick such a function $h$. By assumption, the convergence $P_{\eps}\underset{\eps \to 0}{\Longrightarrow} P$ holds, hence, using \cref{continous_mapping}, it suffices to prove that the uniformly bounded sequence $(h\circ g_{\eps})$ of continuous functions on $\mathcal{D}([0,+\infty))$ converges  to the continuous function $h\circ g$ uniformly on compact subsets of $\mathcal{D}([0,+\infty))$.
Let $K$ be a compact set of $\mathcal{D}([0,+\infty))$. Then, for all $\omega\in K$, $\max_{[0,\eps t_0]}\abs{\omega}$  is uniformly bounded by a constant, say $M$.
%
Fix $\eta>0$. By the uniform continuity of $h$, there exists $\delta>0$ such that for all $\omega \in K$,
\[\dd_u(g_{\eps}(\omega),g(\omega))\leq \delta \quad \Longrightarrow \quad \abs{h\circ g_{\eps}(\omega)-h\circ g (\omega)}\leq \eta.  \]
There exists $\eps_1>0$ small enough, such that for all $\eps\leq \eps_1$, for all $\omega\in K$,
\[\dd_u(g_{\eps}(\omega),g(\omega)) \leq C \abs{\int_0^{\eps t_0}\omega(s)\dd s}\leq C\eps t_0 M \leq \delta. \]
\\
Therefore, we proved that it suffices to prove the convergence of the rescaled velocity process $(\eps^{\frac{1}{\alpha}}V_{t/\eps})_{t\geq \eps t_0}$ in order to prove Theorems \ref{main_thm_inh_levy} and \ref{main_thm_inh_levy_critique}. \\
In Sections \ref{ss: proof_above_levy} and \ref{ss: proof_critical_levy}, the aim is to prove the convergence of the velocity process. 
\subsection{Asymptotic behavior above the critical line}\label{ss: proof_above_levy}

In the remainder of this section, we assume that $\gamma\geq0$ and $\beta >1+\frac{\gamma-1}{\alpha}$.

\begin{proof}[Proof of \cref{main_thm_inh_levy}]
    Thanks to a change of variables, we have, for all $\eps \in (0,1]$ and $t\geq \eps t_0$,
    \[\begin{aligned}
            \eps^{\frac{1}{\alpha}}V_{t/\eps}= & \eps^{\frac{1}{\alpha}}(v_0 - L_{t_0})+ \eps^{\frac{1}{\alpha}}L_{t/\eps} - \eps^{\frac{1}{\alpha}}\int_{t_0}^{t/\eps}F(V_{s})s^{-\beta}\dd s \\
            =                                         & \eps^{\frac{1}{\alpha}}(v_0 - L_{t_0})+ \eps^{\frac{1}{\alpha}}L_{t/\eps} - \eps^{\beta-1+\frac{1}{\alpha}}\int_{\eps t_0}^{t}F(V_{u/\eps})u^{-\beta}\dd u.
        \end{aligned}\]
By self-similarity, $L^{(\eps)}:=(\eps^{\frac{1}{\alpha}}L_{t/\eps})_{t\geq0}$ has the same distribution as an $\alpha$-stable process.
    As a consequence, thanks to \citer[Theorem 3.1 p. 27]{BillingsleyConvergenceprobabilitymeasures1999} and \cref{Skoro}, it suffices to prove
    \begin{equation}
        \label{proved_levy}
        \forall T\geq t_0 \, \sup_{\eps t_0\leq t \leq T} \abs{V_t^{(\eps)}-L_{t}^{(\eps)}}\stackrel{\mathbb{P}}{\to}0,\ \mbox{ as }\eps\to 0.
    \end{equation}
    Recall that under \eqref{hyp1_stable}, there exists a positive constant $K$, such that 
    \begin{equation}\label{major_levy}
        \eps^{\frac{\gamma}{\alpha}}\abs{F\left(\dfrac{V_{\bullet}^{(\eps)}}{\eps^{\frac{1}{\alpha}}}\right)}\leq K\abs{V_{\bullet}^{(\eps)}}^{\gamma}.
    \end{equation}
    Modifying the factor in front of the integral, we get
    \begin{equation}\label{eq:V and L_levy}
        V_t^{(\eps)} =\eps^{\frac{1}{\alpha}}(v_0 - L_{t_0})+ L_t^{(\eps)} - \eps^{\beta-1+\frac{1-\gamma}{\alpha}}\int_{\eps t_0}^{t}\eps^{\frac{\gamma}{\alpha}}F\left(\dfrac{V_{u}^{(\eps)}}{\eps^{\frac{1}{\alpha}}}\right)u^{-\beta}\dd u.
    \end{equation}
    Gathering \eqref{eq:V and L_levy} and \eqref{major_levy}, for all $T\geq \eps t_0$, we have,
    \[ \begin{aligned}
            \sup_{\eps t_0\leq t \leq T} \abs{V_t^{(\eps)}-L_t^{(\eps)}}\leq & \eps^{\frac{1}{\alpha}}(v_0 - L_{t_0})+\eps^{\beta-1+\frac{1-\gamma}{\alpha}}\sup_{\eps t_0\leq t \leq T}\abs{ \int_{\eps t_0}^{t}\eps^{\frac{\gamma}{\alpha}}F\left(\dfrac{V_{u}^{(\eps)}}{\eps^{\frac{1}{\alpha}}}\right)u^{-\beta}\dd u } \\ \leq & \eps^{\frac{1}{\alpha}}(v_0 - L_{t_0})+\eps^{\beta-1+\frac{1-\gamma}{\alpha}}\ \int_{\eps t_0}^{T}K\abs{V_{u}^{(\eps)}}^{\gamma}u^{-\beta}\dd u.
        \end{aligned}
    \]
    Taking the expectation and using the moment estimates on $V$ (see \cref{explosion_time_levy}), we obtain, when $\beta\neq \frac{\gamma}{\alpha}+ 1$,
    \[\begin{aligned}
        \eps^{\beta-1+\frac{(1-\gamma)}{\alpha}} \mathbb{E}\left[\int_{\eps t_0}^{T}K\abs{V_{u}^{(\eps)}}^{\gamma}u^{-\beta}\dd u\right] & = \eps^{\beta-1+\frac{1-\gamma}{\alpha}}\int_{\eps t_0}^{T}K\mathbb{E}\left[\abs{V_{u}^{(\eps)}}^{\gamma}\right]u^{-\beta}\dd u \\ & = \eps^{\beta-1+\frac{1}{\alpha}}\int_{\eps t_0}^{T}K\mathbb{E}\left[\abs{V_{u/\eps}}^{\gamma}\right]u^{-\beta}\dd u
            \\ &\leq  \eps^{\beta-1+\frac{1-\gamma}{\alpha}}\int_{\eps t_0}^{T}KC_{\alpha,\beta,t_0}u^{\frac{\gamma}{\alpha}-\beta}\dd u\\& =
            C\left( \eps^{\beta-1+\frac{1-\gamma}{\alpha}} T^{\frac{\gamma}{\alpha}-\beta+1}-t_0^{\frac{\gamma}{\alpha}-\beta+1}\eps^{\frac{1}{\alpha}}\right).
        \end{aligned} \]
    Hence, setting $q:= \min(\beta-1+\frac{1-\gamma}{\alpha},\frac{1}{\alpha} )$ which is positive, since $\beta >1+{\frac{\gamma-1}{\alpha}}$, we get \[\mathbb{E}\left[ \sup_{\eps t_0\leq t \leq T} \abs{V_t^{(\eps)}-L_t^{(\eps)}} \right] =  \underset{\eps \to 0}{O} (\eps^{q}).\]
    The case $\beta=1+\frac{\gamma}{\alpha}$ can be treated similarly to get
    \[\mathbb{E}\left[ \sup_{\eps t_0\leq t \leq T} \abs{V_t^{(\eps)}-L_t^{(\eps)}} \right]  = \underset{\eps \to 0}{O} (\eps^{\frac{1}{\alpha}}\ln(\eps)).\]
    This concludes the proof.
\end{proof}

\begin{remark}
    Observe that we did not use the condition ``$\gamma<1$ or for all $v\in \mathbb{R},\ vF(v)\geq 0$'' in this proof, except to get moment estimates.
\end{remark}
\subsection{Asymptotic behavior on the critical line}\label{ss: proof_critical_levy}
We adapt the Proposition 2.1, p. 187 of \cite{GradinaruExistenceasymptoticbehaviour2013} to the $\alpha$-stable Lévy case.\\
Pick a ${\mathcal{C}}^2$-diffeomorphism $\varphi: [0,t_1)\to [t_0,+\infty)$.
Let $V$ be the solution to the equation \eqref{equation time_levy}. Thanks to \citer[Proposition 3.4.1 p. 124]{SamorodnitskyStableNonGaussianRandom1994}, the following process is also an $\alpha$-stable process 
\begin{equation}\label{change_noise_levy}
    \left(R_t \right)_{t\geq0} := \left(\int_{0}^{t}\dfrac{\dd L_{\varphi(s)}}{\varphi'(s)^{\frac{1}{\alpha}}}\right)_{t\geq 0}.
\end{equation}
Then, by the change of variables $t=\varphi(s)$, we get
\[V_{\varphi(t)}-V_{\varphi(0)} = \int_{0}^{t}\varphi'(s)^{\frac{1}{\alpha}}\dd R_s-\int_{0}^{t}\dfrac{F(V_{\varphi(s)})}{\varphi(s)^{\beta}}\varphi'(s)\dd s.\]
Thanks to an integration by parts, we get
\[\dd \left(\dfrac{V_{\varphi(s)}}{\varphi'(s)^{\frac{1}{\alpha}}}\right) = \dd R_s-\dfrac{\varphi'(s)^{1-\frac{1}{\alpha}}}{\varphi(s)^{\beta}}F(V_{\varphi(s)})\dd s -\dfrac{\varphi''(s)}{\alpha\varphi'(s)}\dfrac{V_{\varphi(s)}}{\varphi'(s)^{\frac{1}{\alpha}}}\dd s.\]
Set $\Omega=\overline{\mathcal{D}}([t_0,\infty))$ the set of càdlàg functions, that equal $\infty$ after their (possibly infinite) explosion time. Introduce the scaling transformation $\Phi_{\varphi}$ defined, for $\omega \in \Omega $, by
\[\Phi_{\varphi}(\omega)(s) := \dfrac{\omega(\varphi(s))}{\varphi'(s)^{\frac{1}{\alpha}}}\text{, with }s\in [0,t_1).\] As a consequence, we obtain the following result.
\begin{prop}\label{changeoftime_levy}
    If $V$ is a solution to the equation \eqref{equation time_levy}, then $V^{(\varphi)}:=\Phi_{\varphi}(V)$ is a solution to
    \begin{equation}\label{change of time equation_levy}
        \dd V^{(\varphi)}_s=\dd R_s -\dfrac{\varphi'(s)^{1-\frac{1}{\alpha} }}{\varphi(s)^{\beta}}F(\varphi'(s)^{\frac{1}{\alpha}}V_s^{(\varphi)})\dd s -\dfrac{\varphi''(s)}{\varphi'(s)}\dfrac{V_s^{(\varphi)}}{\alpha}\dd s, \mbox{ with }  V_{0}^{(\varphi)}=\dfrac{V_{\varphi(0)}}{\varphi'(0)^{\frac{1}{\alpha}}},
    \end{equation} where $R$ is given by \eqref{change_noise_levy}.

    \noindent
    Conversely, if $V^{(\varphi)}$ is a solution to \eqref{change of time equation_levy}, then $\Phi_{\varphi}^{-1}(V^{(\varphi)})$ is a solution to the equation \eqref{equation time_levy}, where
    \[L_t-L_{t_0} := \int_{t_0}^{t}(\varphi'\circ \varphi^{-1})^{\frac{1}{\alpha}}(s)\dd R_{\varphi^{-1}(s)}.\]
    \noindent
    Furthermore, uniqueness in law, pathwise uniqueness, strong existence hold for the equation \eqref{equation time_levy} if and only if they hold for the equation \eqref{change of time equation_levy}.
\end{prop}
In the following, we will focus on an exponential change of time $\varphi_e: 0\leq t \mapsto t_0 e^t$. This scaling is convenient since it allows to produce a time-homogeneous term in \eqref{change of time equation_levy}.
Thanks to \cref{changeoftime_levy}, the process $V^{(e)}:=\Phi_e(V)$ satisfies the SDE driven by an $\alpha$-stable process $(R_t)_{t\geq0}$,

\begin{equation}\label{equation Ve_levy}
    \dd V_s^{(e)}=\dd R_s-\dfrac{V_s^{(e)}}{\alpha}\dd s- t_0^{1-\frac{1}{\alpha}-\beta} e^{(1-\frac{1}{\alpha}-\beta)s}F\big(t_0 ^{\frac{1}{\alpha}}e^{\frac{s}{\alpha}}V_s^{(e)}\big)\dd s.
\end{equation}
\begin{proof}[Proof of \cref{main_thm_inh_levy_critique}]
Assume in the sequel that $\beta=1+\frac{\gamma-1}{\alpha}$.\\
\textsc{Step 1.} Firstly we prove the finite-dimensional convergence. To that end, we reduce the problem to the convergence of a time-homogenous process. \\
Since \eqref{hyp1_stable} holds, \eqref{equation Ve_levy} becomes
\begin{equation}\label{equation_H_levy}
    \dd V_s^{(e)}=\dd R_s-\dfrac{V_s^{(e)}}{\alpha}\dd s-F\big(V_s^{(e)}\big)\dd s.
\end{equation}
Using the bijection $\Phi_e$ induced by the exponential change of time (see \cref{changeoftime_levy}), and the unique strong existence of the velocity process $V$ (see \cref{existence hyp1_stable} and \cref{explosion_time_levy}), there exists a pathwise unique strong solution $H$ to the time-homogeneous equation \eqref{equation_H_levy}. 
Hence, we have the equality \[ \left(\dfrac{V_{t_0e^t}}{(t_0e^{t})^{\nicefrac{1}{\alpha}}} \right)_{t\geq 0} =  (H_t)_{t\geq 0},\]
as two solutions of the same SDE, starting from the same point. 
We can write the above equality as \begin{equation*}\label{equal in law_levy}
    \left(\dfrac{V_{t}}{t^{\frac{1}{\alpha}}} \right)_{t\geq t_0} =  (H_{\log(t/t_0)})_{t\geq t_0}.
\end{equation*}
So, we have, for all $\eps>0$, $d\in \mathbb{N}^*$, and $(t_1, \cdots, t_d) \in [\eps t_0, +\infty)^d$,
\begin{equation}\label{eq: V and H_levy}
    \left(\dfrac{V_{\eps^{-1}t_1}}{(\eps^{-1}t_1)^{\nicefrac{1}{\alpha}}}, \cdots, \dfrac{V_{\eps^{-1}t_d}}{(\eps^{-1}t_d)^{\nicefrac{1}{\alpha}}} \right) =  \left( H_{\log(t_1)+\log((\eps t_0)^{-1})}, \cdots, H_{\log(t_d)+\log((\eps t_0)^{-1})} \right).
\end{equation}    
Since $\limsup_{\abs{x}\to +\infty} \frac{-F(x)-x/\alpha}{x}<0$, it follows from \citer[Proposition 0.1]{KulikExponentialergodicitysolutions2009} that the process $(H_t)_{t\geq 0}$ is exponentially ergodic. We denote its invariant measure by $\Lambda_F$.
Call $\widetilde{H}$ the solution of the time homogeneous equation \eqref{equation_H_levy} starting from $\Lambda_F$.
For $t_1, \cdots, t_d \in \mathbb{R}^d$, let $\Lambda_{F,t_1, \cdots, t_d}:= \mathcal{L}(\widetilde{H}_{t_1}, \cdots, \widetilde{H}_{t_d})$ 
be the distribution of $(\widetilde{H}_{t_1}, \cdots, \widetilde{H}_{t_d})$. Then, for all $s\geq 0$, $\Lambda_{F,t_1, \cdots, t_d}= \Lambda_{F,t_1+s, \cdots, t_d+s}$. Indeed, thanks to the invariance property of $\Lambda_F$, $(\widetilde{H}_{\bullet})$ and $(\widetilde{H}_{\bullet+s})$ satisfy the same SDE, starting from the same point. As a consequence, we get the stationary limit 
\begin{equation} \label{eq: stationnarity_levy}
    \lim\limits_{\eps\to 0}\mathcal{L}\left( \widetilde{H}_{\log(t_1)+\log((\eps t_0)^{-1})}, \cdots, \widetilde{H}_{\log(t_d)+\log((\eps t_0)^{-1})} \right) =   \Lambda_{F,\log(t_1), \cdots, \log(t_d)}. 
\end{equation} 
Moreover, by exponential ergodicity, for every $\psi: \mathbb{R}^d\to \mathbb{R}$ continuous and bounded function, 
\begin{equation} \label{eq: ergodicity_levy}
    \abs{\mathbb{E}\left[\psi\left(H_{\log(t_1/(t_0\eps))},\cdots, H_{\log(t_d/(t_0\eps))} \right)  \right]-\mathbb{E}\left[\psi\left(\widetilde{H}_{\log(t_1/(t_0\eps))},\cdots,  \widetilde{H}_{\log(t_d/(t_0\eps))} \right) \right] } \underset{\eps \to 0}{\longrightarrow}0.
\end{equation}
We postpone the proof of this limit in Step 2.
\\To conclude this step, gather \eqref{eq: V and H_levy}, \eqref{eq: stationnarity_levy} and \eqref{eq: ergodicity_levy} to get
\[\left(\dfrac{V_{\eps^{-1}t_1}}{(\eps^{-1}t_1)^{\nicefrac{1}{\alpha}}}, \cdots, \dfrac{V_{\eps^{-1}t_d}}{(\eps^{-1}t_d)^{\nicefrac{1}{\alpha}}} \right)\quad \underset{\eps\to 0}{\Longrightarrow} \quad \Lambda_{F,\log(t_1), \cdots, \log(t_d)}. \]
This can also be written as
\[\left(\eps^{\frac{1}{\alpha}}V_{t_1/\eps}, \cdots, \eps^{\frac{1}{\alpha}}V_{t_d/\eps} \right)\quad \underset{\eps\to 0}{\Longrightarrow}\quad  T*\Lambda_{F,\log(t_1), \cdots, \log(t_d)}, \]
where $T*\Lambda_{F,\log(t_1), \cdots, \log(t_d)}$ is the pushforward of the measure $\Lambda_{F,\log(t_1), \cdots, \log(t_d)}$ by the linear map $T(u_1, \cdots, u_d):=(t_1^{\nicefrac{1}{\alpha}}u_1, \cdots, t_d^{\nicefrac{1}{\alpha}}u_d)$.\\
\textsc{Step 2.} Let us now prove \eqref{eq: ergodicity_levy}. 
\\For sake of clarity, let us give a proof for $d=2$, the general case $d\geq2$ is similar.\\
Let $\psi: \mathbb{R}^2\to \mathbb{R}$ be a continuous and bounded function. 
Pick $\eps t_0 \leq s\leq t$. Set $h_0=v_0t_0^{-\frac{1}{\alpha}} $, \eqref{eq: ergodicity_levy} is now equivalent to 
\begin{equation*}
    \abs{\mathbb{E}\left[\psi\left(H_{\log(s/(t_0\eps))}, H_{\log(t/(t_0\eps))} \right) \Big | H_0=h_0 \right]-\mathbb{E}\left[\psi\left(H_{\log(s/(t_0\eps))}, H_{\log(t/(t_0\eps))} \right) \Big | H_0\sim \Lambda_F \right] } \underset{\eps \to 0}{\longrightarrow}0.
\end{equation*}
We set $\mu_{\eps}:= \mathcal{L}\left(H_{\log(s/(t_0\eps))} \Big| H_0=h_0 \right)$. 
We now use the generalized Markov property of solution to SDE driven by Lévy process. For the sake of completeness, we state and prove it in our context in Appendix (see \cref{Markov}).
This leads to  
\[
        \mathbb{E}\left[\psi\left(H_{\log(s/(t_0\eps))}, H_{\log(t/(t_0\eps))} \right) \Big | H_0=h_0 \right]
         =  \mathbb{E}\left[\psi\left(H_{0}, H_{\log(t/s)} \right)\Big | H_0\sim \mu_{\eps }\right]
  \]
  and, since $\Lambda_F$ is invariant,
  \[
    \mathbb{E}\left[\psi\left(H_{\log(s/(t_0\eps))}, H_{\log(t/(t_0\eps))} \right) \Big | H_0\sim \Lambda_F \right]
     =  \mathbb{E}\left[\psi\left(H_{0}, H_{\log(t/s)} \right)\Big | H_0\sim \Lambda_F\right].
\]
Then, we are reduced to prove 
\begin{equation*}
\abs{\mathbb{E}\left[\psi\left(H_{0}, H_{\log(t/s)} \right)\Big | H_0\sim \mu_{\eps }\right]-\mathbb{E}\left[\psi\left(H_{0}, H_{\log(t/s)} \right)\Big | H_0\sim \Lambda_F\right]}\underset{\eps \to 0}{\longrightarrow}0.
\end{equation*} 
The left-hand side can be written as,
\[
    \int_{\mathbb{R}}\mathbb{E}\left[\psi\left(H_{0}, H_{\log(t/s)} \right)\Big | H_0=y\right]\left(\mu_{\eps}(\dd y)- \Lambda_F(\dd y) \right). \]
    Hence, setting $p(t,x,\dd y):= \mathbb{P}_x(H_t\in \dd y)$ and $\|.\|_{TV}$ for the total variation norm, we get
    \begin{multline*}
        \abs{\mathbb{E}\left[\psi\left(H_{0}, H_{\log(t/s)} \right)\Big | H_0\sim \mu_{\eps }\right]-\mathbb{E}\left[\psi\left(H_{0}, H_{\log(t/s)} \right)\Big | H_0\sim \Lambda_F\right]}\\ \leq \left\|\psi \right\|_{\infty}\int_{\mathbb{R}}\abs{p\left(\log(s/(t_0\eps)),h_0, \dd y\right)-\Lambda_F(\dd y)} \\ \leq \left\|\psi \right\|_{\infty} \|p\left(\log(s/(t_0\eps)),h_0, \cdot\right)-\Lambda_F\|_{TV}.
    \end{multline*}
    This converges to 0, as $\eps \to 0$ by exponential ergodicity of $H$.
\\
\textsc{Step 3.} Let us prove now the tightness of the family of distributions of càdlàg process $\left(V^{(\eps)}\right)_{t\geq \eps t_0}=\left(\eps^{\frac{1}{\alpha}} V_{t/\eps} \right)_{t\geq \eps t_0}$ on every compact interval $[m,M]$, $0<m\leq M$. \\ We check the Aldous's tightness criterion stated at \citer[Theorem 16.10 p.178]{BillingsleyConvergenceprobabilitymeasures1999}.
Let $a,\ \eta,\ T$ be positive reals. Let $\tau$ be a discrete stopping time with finite range $\mathcal{T}$, bounded by $T$.
Choose $\delta>0$ and $\eps>0$ small enough.\\
We have, by Jensen's inequality, for $r=\frac{\alpha}{2}$,
\[\mathbb{E}\left[\abs{V_{\tau+\delta}^{(\eps)}-V_{\tau}^{(\eps)}}^r \right]\leq \mathbb{E}\left[\abs{L_{\tau+\delta}^{(\eps)}-L_{\tau}^{(\eps)}}^r \right] + \mathbb{E}\left[\int_{\tau}^{\tau+\delta}K\abs{V_u^{(\eps)}}^{\gamma}u^{-\beta}\dd u \right]^r.  \]
Since $L^{(\eps)}$ is an $\alpha$-stable process, by the strong Markov property,
\[\mathbb{E}\left[\abs{L_{\tau+\delta}^{(\eps)}-L_{\tau}^{(\eps)}}^r \right] = \mathbb{E}\left[ \mathbb{E}_{L_{\tau}}\left[\abs{L_{\delta}-L_{0}}^r\right] \right] \leq C \delta^{\frac{r}{\alpha}}.  \]
The stopping time has a finite range $\mathcal{T}$. Hence, we can write
\[ \begin{aligned}
        \mathbb{E}\left[\int_{\tau}^{\tau+\delta}K\abs{V_u^{(\eps)}}^{\gamma}u^{-\beta}\dd u \right] & = \mathbb{E}\left[\mathbb{E}\left[\int_{\tau}^{\tau+\delta}K\abs{V_u^{(\eps)}}^{\gamma}u^{-\beta}\dd u \Big|  \tau \right]\right] \\ & = \mathbb{E}\left[\sum_{\tau_i \in \mathcal{\tau}} \dfrac{1}{\mathbb{P}(\tau=\tau_i)}\mathbb{E}\left[ \mathbb{1}_{\{\tau=\tau_i\}}\int_{\tau_i}^{\tau_i+\delta}K\abs{V_u^{(\eps)}}^{\gamma}u^{-\beta}\dd u  \right]\mathbb{1}_{\{\tau=\tau_i\}}\right] \\ & \leq   \mathbb{E}\left[\sum_{\tau_i \in \mathcal{\tau}} \dfrac{1}{\mathbb{P}(\tau=\tau_i)}\mathbb{E}\left[\int_{\tau_i}^{\tau_i+\delta}K\abs{V_u^{(\eps)}}^{\gamma}u^{-\beta}\dd u  \right]\mathbb{1}_{\{\tau=\tau_i\}}\right].
    \end{aligned}
\]
For each $\tau_i\in \mathcal{T}$, using the relation $\beta=1+\frac{(\gamma-1)}{\alpha}$ and the moment estimates on $V$ (see \cref{explosion_time_levy}), we obtain 
\[\begin{aligned}
        \mathbb{E}\left[\int_{\tau_i}^{\tau_i+\delta}K\abs{V_u^{(\eps)}}^{\gamma}u^{-\beta}\dd u  \right] & = \int_{\tau_i}^{\tau_i+\delta} K \mathbb{E}\left[ \abs{V_u^{(\eps)}}^{\gamma} \right]u^{-\beta}\dd u \\ & \leq K  \int_{\tau_i}^{\tau_i+\delta} u^{\frac{\gamma}{\alpha}-\beta} \dd u  = K  \left[(\tau_i+\delta)^{\frac{1}{\alpha}} - \tau_i^{\frac{1}{\alpha}}\right] \\ & \leq K \delta ^{1,  \frac{1}{\alpha}}.
    \end{aligned}\]
The term $\delta^{1, \frac{1}{\alpha}}$ has to be read as $\delta$ or $\delta^{\frac{1}{\alpha}}$ depending on the fact that $x\mapsto x^{ \frac{1}{\alpha}}$ is a Lipschitz continuous function on $[0,T+\delta]$, if $\alpha<1$, or a $\frac{1}{\alpha}$-Hölder function, if $\alpha>1$.\\
By Markov's inequality, for $\delta$ small enough, we have
\[ \mathbb{P}\left(\abs{V_{\tau+\delta}^{(\eps)}- V_{\tau}^{(\eps)}}\geq a  \right) \leq \dfrac{K \delta ^{r, \frac{r}{\alpha}} }{a^r} \leq \eta.
\]
Furthermore, by moment estimates (see Propositions \ref{esperance_levy}, \ref{esperance2_levy} and \ref{esperance3_levy}), for all $t\geq \eps t_0$,
\[\sup_{\eps}\left[\abs{V_t^{(\eps)}}^r \right] \leq C t^{\frac{r}{\alpha}}. \]
Hence, using again Markov's inequality, by \citer[Corollary and Theorem 16.8 p. 175]{BillingsleyConvergenceprobabilitymeasures1999}, this concludes the proof of the tightness of the velocity process and therefore the proof of \cref{main_thm_inh_levy_critique}.

\end{proof}
\subsection{Asymptotic behavior below the critical line}
Assume in this section that $\beta < 1+\frac{\gamma-1}{\alpha}$ and $\alpha>1$. As a consequence, $\alpha q<1$. We take an interest into the power change of time $\varphi_q: t\mapsto \left(t_0^{1-\alpha q}+(1-\alpha q)t\right)^{\frac{1}{1-\alpha q}}$. Thanks to \cref{changeoftime_levy}, the process $V^{(q)}:= \Phi_q(V)$ satisfies the SDE driven by an $\alpha$-stable process $R$ having the same distribution as $L$,
\begin{equation}\label{equation Vq_levy}
    \dd V_s^{(q)}=\dd R_s- F\big(V_s^{(q)}\big)\dd s-q \varphi_q^{\alpha q -1 }V_s^{(q)}\dd s.
\end{equation}
\begin{proof}[Proof of \cref{sub-critical_levy}]
    \begin{steps}
        \item We first prove the finite dimensional convergence of the velocity process $(V_t^{(\eps)})_{t\geq \eps t_0}:= (\eps^q V_{t/\eps})_{t\geq \eps t_0}$. We give a proof for $d=2$, the general case $d\geq 2$ is similar.
        We call $H$ the ergodic process solution to 
        \begin{equation}\label{SDE_homogene_levy}
            \dd H_s =\dd R_s- F\Big(H_s\Big)\dd s,\mbox{ with } H_0=h_0:=v_0 t_0^{-q}.
        \end{equation}
        We denote by $\Pi_F$ its invariant measure.
        Using the bijection induced by the power change of time (\cref{changeoftime_levy}), as solutions of the same SDE starting at the same point, 
we have, for all $\eps>0$, and $(s,t)\in [\eps t_0,+\infty)^2$,
\begin{equation*}
\left(\eps^q \dfrac{V_{\eps^{-1}s}}{s^q}, \eps^q \dfrac{V_{\eps^{-1}t}}{t^q}   \right) =  \left(V^{(q)}_{\varphi^{-1}(\eps^{-1}s)},  V^{(q)}_{\varphi^{-1}(\eps^{-1}t)} \right)
\end{equation*}
Using \citer[Theorem 3.1 p. 27]{BillingsleyConvergenceprobabilitymeasures1999}, it suffices to prove that for all $(s,t)\in [\eps t_0,+\infty)^2$
\begin{itemize}
	\item $\absd{\left(\left(H_{\varphi^{-1}(\eps^{-1}s)},H_{\varphi^{-1}(\eps^{-1}t)} \right),\left(V^{(q)}_{\varphi^{-1}(\eps^{-1}s)},V^{(q)}_{\varphi^{-1}(\eps^{-1}t)} \right)  \right)}\underset{\eps \to 0}{\longrightarrow}0$, where $\absd{\cdot}$ is a metric on $\mathbb{R}^2$.
	\item $\left(H_{\varphi^{-1}(\eps^{-1}s)},H_{\varphi^{-1}(\eps^{-1}t)} \right)\quad \underset{\eps \to 0}{\Longrightarrow}\quad  \Pi_F \otimes \Pi_F $.
\end{itemize}
    \item We prove that for all $t\geq\eps t_0$, $\mathbb{E}\left[\left(H_{\varphi^{-1}(\eps^{-1}t)}- V^{(q)}_{\varphi^{-1}(\eps^{-1}t)} \right)^2 \right] \underset{\eps\to0}{\longrightarrow} 0$.
 \\
    Pick $t\geq \eps t_0$. For simplicity of notation, we write $H^{(\varphi,\eps)}_t:=H_{\varphi^{-1}(\eps^{-1}t)}$ and $V^{(\varphi,\eps)}_t:=V^{(q)}_{\varphi^{-1}(\eps^{-1}t)}$.
    We have
    \begin{equation*}
        \dd \left(H^{(\varphi,\eps)}_t- V^{(\varphi,\eps)}_t\right)= -\eps^{\alpha q-1}\left(F(H^{(\varphi,\eps)}_t)-F(V^{(\varphi,\eps)}_t) \right)t^{-\alpha q}\dd t+ qt^{-1}V^{(\varphi,\eps)}_t\dd t.
    \end{equation*}
    Pick $\delta>0$ and $\kappa\in (1,\alpha)$, by straightforward differentiation,
\begin{multline*}
	\dd \abs{H^{(\varphi,\eps)}_t- V^{(\varphi,\eps)}_t}^\kappa \leq  
	-\dfrac{\kappa\eps^{\alpha q-1}}{t^{\alpha q}}\abs{F(H^{(\varphi,\eps)}_t)-F(V^{(\varphi,\eps)}_t)}\abs{H^{(\varphi,\eps)}_t- V^{(\varphi,\eps)}_t}^{\kappa-1}\mathbb{1}_{\left\{\abs{H^{(\varphi,\eps)}_t- V^{(\varphi,\eps)}_t}>\delta\right\}}\dd t\\+\kappa t^{-1}qV^{(\varphi,\eps)}_t\sgn\left(H^{(\varphi,\eps)}_t- V^{(\varphi,\eps)}_t\right)\abs{H^{(\varphi,\eps)}_t- V^{(\varphi,\eps)}_t}^{\kappa-1}\dd t
\end{multline*}
Since $\gamma\geq1$, the function $F^{-1}$ is $\frac{1}{\gamma}$-Hölder, therefore there exists $C_{\gamma}>0$ such that, 
\begin{multline*}
	\dd \abs{H^{(\varphi,\eps)}_t- V^{(\varphi,\eps)}_t}^{\kappa} \leq  
	-\dfrac{\kappa \eps^{\alpha q-1}}{t^{\alpha q}}C_{\gamma}\delta^{\gamma-1}\abs{H^{(\varphi,\eps)}_t- V^{(\varphi,\eps)}_t}^{\kappa}\mathbb{1}_{\left\{\abs{H^{(\varphi,\eps)}_t- V^{(\varphi,\eps)}_t}>\delta\right\}}\dd t\\+\kappa t^{-1}\abs{qV^{(\varphi,\eps)}_t}\abs{H^{(\varphi,\eps)}_t- V^{(\varphi,\eps)}_t}^{\kappa-1}\dd t.
\end{multline*}
 We set $g_{\eps}(t)=\mathbb{E}\left[\abs{H^{(\varphi,\eps)}_t- V^{(\varphi,\eps)}_t
}^{\kappa} \right]$ and $\widetilde{g}_{\eps}(t)=\mathbb{E}\left[\abs{H^{(\varphi,\eps)}_t- V^{(\varphi,\eps)}_t
}^{\kappa}\mathbb{1}_{\left\{\abs{H^{(\varphi,\eps)}_t- V^{(\varphi,\eps)}_t}>\delta\right\}} \right]$. 
Taking expectation, we get,
\begin{equation}\label{ODE_levy}
    \widetilde{g}_{\eps}'(t)\leq -\dfrac{\kappa \eps^{\alpha q-1}}{t^{\alpha q}}C_{\gamma}\delta^{\gamma-1} \widetilde{g}_{\eps}(t)+b_{\eps}(t), \mbox{ with } \widetilde{g}_{\eps}(\eps t_0)=0
    \end{equation}
    where, using Hölder's inequality and moment estimates (\cref{esperance_levy}), we have 
    \[ \begin{aligned}
        b_{\eps}(t)&:=  \kappa t^{-1}q\mathbb{E}\left[V^{(\varphi,\eps)}_t\abs{H^{(\varphi,\eps)}_t- V^{(\varphi,\eps)}_t}^{\kappa-1}\right]\leq \kappa t^{-1}\abs{q}\mathbb{E}\left[\left(V^{(\varphi,\eps)}_t\right)^{\kappa}\right]^{\frac{1}{\kappa}}g_{\eps}(t)^{\frac{\kappa -1}{\kappa }}\\ & \leq C t^{-1}\left(\dfrac{t}{\eps}\right)^{\frac{1}{\alpha}-q}g_{\eps}(t)^{\frac{\kappa-1}{\kappa}}.
    \end{aligned}\]
    We use comparison theorem for ordinary differential equation on \eqref{ODE_levy} to get, setting $h(t):= \dfrac{1}{1-\alpha q}C_{\gamma}\delta^{\gamma-1}t^{1-\alpha q}$ 
\begin{equation*}
\widetilde{g}_{\eps}(t)\leq  \int_{\eps t_0}^t b_{\eps}(s)\exp(-\kappa \eps^{\alpha q-1}\left(h(t)-h(s)\right))\dd s.
\end{equation*}
As a consequence,
\[\begin{aligned}
	g_{\eps}(t)&\leq \delta^2+\widetilde{g}_{\eps}(t)
	\\ & \leq \delta^2+ \exp(-\kappa \eps^{\alpha q-1}h(t))C\int_{\eps t_0}^t s^{-1} \left(\dfrac{s}{\eps}\right)^{\frac{1}{\alpha}-q}\left[g_{\eps}(s)\exp(\kappa \eps^{\alpha q-1}h(s))\right]^{\frac{\kappa-1}{\kappa}} \exp(\eps^{\alpha q-1}h(s))\dd s.
\end{aligned}\]
Applying a Grönwall-type lemma (\cref{gronwall_levy}) to $g_{\eps}(s)\exp(\kappa \eps^{\alpha q-1}h(s))$, we have
\begin{equation*}
	\begin{aligned}
		g_{\eps}(t)&\leq C\delta^2+C\left(\int_{\eps t_0}^t s^{-1} \left(\dfrac{s}{\eps}\right)^{\frac{1}{\alpha}-q} \exp(-\eps^{\alpha q-1}(h(t)-h(s)))\dd s\right)^{\kappa}.
	\end{aligned}
\end{equation*}
We conclude, using the dominated convergence theorem, since $1-\alpha q>0$, that for all $\delta>0$
\begin{equation}\label{limsup_levy}
	0\leq \limsup_{\eps\to 0}g_{\eps}(t)\leq \delta^2.
\end{equation}
To prove the domination hypothesis, note that by optimization of the function $x\mapsto x^{\frac{1}{\alpha}}\exp(-Ax)$,
\[	\mathbb{1}_{\{\eps t_0\leq s \leq t\}}s^{-1} s^{-1} \left(\dfrac{s}{\eps}\right)^{\frac{1}{\alpha}-q} \exp(-\eps^{\alpha q-1}(h(t)-h(s)) \leq C \mathbb{1}_{\{0\leq s \leq t\}} s^{-1+\frac{1}{\alpha}-q} \dfrac{1}{\left(h(t)-h(s)\right)^{\frac{1}{\alpha}}}.
\]
This function in integrable, since $1-\alpha q>0$ and $\alpha>1$.
We let $\delta\to 0$ in \eqref{limsup_levy} to conclude that for all $t>0$, $\lim_{\eps\to 0}g_{\eps}(t)=0$.
\item Pick $(s,t)\in [\eps t_0,+\infty)^2$. Similarly, as in \cite{GradinaruAsymptoticbehaviortimeinhomogeneous2021a}, one can prove that the solution $H$ to \eqref{SDE_homogene_levy} satisfies
\begin{equation}\label{decouplage_H_levy}
	\left(H_{\varphi^{-1}(\eps^{-1}s)},H_{\varphi^{-1}(\eps^{-1}t)} \right)\quad \underset{\eps \to 0}{\Longrightarrow}\quad  \Pi_F \otimes \Pi_F. 
\end{equation}
\end{steps}
\end{proof}
\section{Extended results in the Lévy case}\label{s: extended}
In this section, the driving process of \eqref{equation time_levy} is supposed to be a general Lévy process $L$.
We denote by $(A,\nu, b)$ its generating triplet, with respect to the truncation function $h:z\mapsto -1\vee (z\wedge 1)$. Here  $\nu$ is the Lévy measure, $A>0$ and $b\in \mathbb{R}$.  This means that, in virtue of the Lévy-Khintchine formula (see \citer[Theorem 8.1 p. 37]{SatoLevyprocessesinfinitely1999}), the characteristic function of $L_t$ is given by
\[\mathbb{E}\left[e^{iuL_t}\right]= e^{t\psi(u)}, \mbox{ where } \psi(u):=-\dfrac{Au^2}{2}+ib u +\int_{\mathbb{R}}(e^{iux}-1-iuh(x))\nu(\dd x).  \]
Pick $\alpha\in (0,2)\setminus \{1\}$. The Lévy measure $\nu$ of the driving process is supposed to satisfy either 
\begin{equation}\tag{$H_1^{\nu,\alpha}$}\label{H1} 
    \begin{gathered}  
        \nu(z)=\frac{g(z)}{\abs{z}^{1+\alpha}}\mathbb{1}_{\{z\neq 0\}}, \mbox{ where $g$ is a non-negative measurable function such that}\\
        c^+:= \lim_{z\to +\infty}g(z)\geq 0, \quad c^-:=\lim_{z\to -\infty}g(z)\geq 0,
    \end{gathered}
\end{equation}
or 
\begin{equation}\tag{$H_2^{\nu,\alpha_{0}}$}\label{H2} \mbox{for some $\alpha_0>1$, }\int_{\abs{z}\geq 1}\abs{z}^{\alpha_0}\nu(\dd z) <+\infty .
\end{equation}
Remark that if $\nu$ satisfies \eqref{H1} with $\alpha>1$, then it satisfies \eqref{H2}.
Note that any tempered stable process satisfies \eqref{H1}, and any truncated $\alpha$-stable process satisfies \eqref{H2}.\\
For clarity, we recall the stochastic kinetic model:
for $t\geq t_0>0$,
\begin{equation}\tag{\text{SKE}}
    \dd V_t= \dd L_t-t^{-\beta}F(V_{t})\dd t, \mbox{ with } V_{t_0}=v_0>0,\
    \mbox{ and }\
    \dd X_t=V_t \dd t, \mbox{ with } X_{t_0}=x_0\in \mathbb{R}.
\end{equation}
We work under the assumption that there exists a unique solution to \eqref{equation time_levy}. 
We will show that \cref{main_thm_inh_levy} can be extended with this general Lévy driving process. We suppose first the Lévy process to be without a Brownian component. This case will be discussed in \cref{ss: withBM}. We obtain the two following theorems, depending on which hypothesis is satisfied by the Lévy measure $\nu$.
\begin{theo}\label{gene_thm_levy}
    Consider $\gamma\in[0,\alpha)$ and $\beta\geq 0$. Assume that \eqref{H1} and \eqref{hyp_sign_stable} are satisfied and define 
    \[ p_{\alpha}(\gamma) := \begin{cases}
        \frac{\gamma}{\alpha} & \mbox{if } g \mbox{ is bounded},\\
        \frac{\gamma}{\alpha} & \mbox{if } \alpha\in (0,1),\\
        \frac{\gamma}{\alpha} & \mbox{if } \alpha \in (1,2), \ \gamma \in [0,1) \mbox{ and } b=0, \\
        \gamma & \mbox{if } \alpha \in [1,2), \ \gamma \in [0,1) \mbox{ and } b\neq 0,\\
        \gamma & \mbox{if } \alpha \in (1,2), \ \gamma=1,\\
        \frac{\gamma}{\alpha}+\frac{\gamma}{2}& \mbox{if } \alpha \in (1,2), \ \gamma\in (1,\alpha).
    \end{cases}
        \]
Let $(V_t,X_t)_{t\geq t_0}$ be the solution to \eqref{equation time_levy}.
    \\Then there exist a rate of convergence $\eps^{\theta}$ and a Lévy process $\mathcal{L}$, given in \cref{limit_levy} \textit{(iii)-(v)}, such that, as $\eps\to 0$, if $\beta >1+p_{\alpha}(\gamma)-\theta$, in the space $\mathcal{D}$,
    \begin{equation*}
        (\eps^{\theta }V_{t/\eps}, \eps^{1+\theta}X_{t/\eps})_{t\geq \eps t_0}\quad \underset{\eps \to 0}{\Longrightarrow}\quad \left(\mathcal{L}_{t}, \int_{0}^t\mathcal{L}_s \dd s\right)_{t>0}.
    \end{equation*}
\end{theo}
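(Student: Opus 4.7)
The strategy is to mimic the proof of \cref{main_thm_inh_levy} while substituting two key ingredients. The role formerly played by the self-similarity identity $(\eps^{\nicefrac{1}{\alpha}}L_{t/\eps})\stackrel{d}{=}(L_t)$ is now played by the convergence $\eps^\theta L_{t/\eps}\Longrightarrow \mathcal{L}$ established in \cref{limit_levy}, which simultaneously provides the scaling rate $\theta$ and identifies the limiting Lévy process $\mathcal{L}$. The moment estimates of \cref{explosion_time}, proved in \cref{s:moment} for an $\alpha$-stable driving noise, must first be re-established under the generalised hypothesis \eqref{H1}, yielding the exponent $p_\alpha(\gamma)$ displayed in the theorem; the extra case $b\neq 0$ appearing in its formula reflects the fact that a non-zero compensator drift forces $\mathbb{E}[|L_t|]$ to grow linearly instead of at the pure stable rate $t^{\nicefrac{1}{\alpha}}$.

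The reduction argument laid out at the beginning of \cref{s:proofs} transports verbatim, since it relies only on the continuity of the integration map and not on the nature of the driving noise: it suffices therefore to establish $V^{(\eps)}_\bullet:=(\eps^\theta V_{\bullet/\eps})\Longrightarrow \mathcal{L}$ in $\mathcal{D}$ endowed with the Skorokhod topology. Multiplying \eqref{equation time_levy} by $\eps^\theta$, performing the change of variable $u=\eps s$ and exploiting the homogeneity $F(\eps^{-\theta}v)=\eps^{-\theta\gamma}F(v)$ yields
\begin{equation*}
V^{(\eps)}_t=\eps^\theta(v_0-L_{t_0})+\eps^\theta L_{t/\eps}-\eps^{\beta-1+\theta(1-\gamma)}\int_{\eps t_0}^t F\bigl(V^{(\eps)}_u\bigr)u^{-\beta}\,\dd u.
\end{equation*}
The first two terms converge jointly in law to $\mathcal{L}_t$ thanks to \cref{limit_levy}, so it remains to show that the integral term vanishes in probability uniformly on every compact $[\eps t_0,T]$. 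The bound $|F(v)|\leq K|v|^\gamma$ and the (extended) moment estimate $\mathbb{E}[|V_{u/\eps}|^\gamma]\leq C(u/\eps)^{p_\alpha(\gamma)}$ produce an $L^1$-bound of order $\eps^{\theta+\beta-1-p_\alpha(\gamma)}\int_{\eps t_0}^T u^{p_\alpha(\gamma)-\beta}\,\dd u$, which is $O\bigl(\eps^{\min(\theta,\,\theta+\beta-1-p_\alpha(\gamma))}\bigr)$ precisely under the hypothesis $\beta>1+p_\alpha(\gamma)-\theta$. The borderline case $\beta=1+p_\alpha(\gamma)$ is handled exactly as in the proof of \cref{main_thm_inh_levy}, producing at worst an extra logarithmic factor. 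Slutsky's lemma then concludes the convergence of the velocity, and the two-dimensional conclusion follows from the continuous-mapping argument already displayed in \cref{s:proofs}.

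The main obstacle is the adaptation of the moment estimates of \cref{s:moment} to a general Lévy noise satisfying \eqref{H1}. The Itô-formula proofs of \cref{esperance,esperance2,esperance3} rely on the sign condition \eqref{hyp_sign} and on the finiteness of $\int_{|z|\geq 1}|z|^\kappa\nu(\dd z)$ for $\kappa<\alpha$, both of which remain available under \eqref{H1}; nevertheless the delicate sub-case $\alpha\in(1,2)$, $\gamma\in[0,1)$ with $b\neq 0$ requires redoing the Grönwall-type estimate of \cref{esperance2} with the sharper input $\mathbb{E}[|L_t|]\lesssim |b|t$, which is exactly what produces the alternative exponent $p_\alpha(\gamma)=\gamma$ announced in the theorem. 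Once this collection of estimates is in place, the remaining steps of the proof are essentially mechanical transpositions of those of \cref{main_thm_inh_levy}.
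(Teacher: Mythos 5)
Your proposal is correct and follows essentially the same route as the paper: re-establish the moment estimates of \cref{s:moment} under \eqref{H1} (including the $b\neq 0$ dichotomy via the decomposition $L_t=bt+\widehat{L}_t$), replace self-similarity by the convergence of $\eps^{\theta}L_{t/\eps}$ from \cref{limit_levy}, bound $\sup_{\eps t_0\leq t\leq T}\abs{V^{(\eps)}_t-L^{(\eps)}_t}$ in $L^1$ by $O(\eps^{\min(\theta,\,\theta+\beta-1-p_{\alpha}(\gamma))})$ using homogeneity of $F$, and conclude by the converging-together argument and the reduction to the velocity process from \cref{s:proofs}. This is exactly the structure of \cref{ss:proof extended}, including the treatment of the borderline case $\beta=1+p_{\alpha}(\gamma)$.
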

\begin{theo}\label{gene_thm_levy2}
    Consider $\gamma \geq 0$ and $\beta \geq0$. Assume that \eqref{H2} is satisfied and define 
    \[ p_{\alpha_0}(\gamma):=\begin{cases}
        \gamma & \mbox{if } \gamma\in [0,1),\\
        \min(\gamma, \frac{\gamma}{\alpha_0}+\frac{\gamma}{2})\, & \mbox{if } \gamma \in [0,1) \mbox{ and } vF(v)\geq0, \\
        \frac{\gamma}{\alpha_0}+\frac{\gamma}{2} & \mbox{if } \gamma \in [1, \alpha_0] \mbox{ and } vF(v)\geq 0.
    \end{cases}
        \]
    Suppose that $\beta >1+p_{\alpha_0}(\gamma)-1$.

    Let $(V_t,X_t)_{t\geq t_0}$ be the solution to \eqref{equation time_levy}.
    \\Then there exists a Lévy process $\mathcal{L}$, given in \cref{limit_levy} \textit{(iii)}, such that, as $\eps\to 0$, in the space $\mathcal{D}$,
    \begin{equation*}
        (\eps V_{t/\eps}, \eps^{2}X_{t/\eps})_{t\geq \eps t_0}\quad \underset{\eps \to 0}{\Longrightarrow}\quad \left(\mathcal{L}_{t}, \int_{0}^t\mathcal{L}_s \dd s\right)_{t>0}.
    \end{equation*}
\end{theo}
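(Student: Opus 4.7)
The plan is to follow exactly the strategy of the proof of \cref{main_thm_inh_levy}, adapting the two ingredients that relied on the $\alpha$-stable structure, namely the self-similarity of the noise and the moment estimates of \cref{s:moment}. As in the preamble to \cref{s:proofs}, the continuous-mapping argument (via the integral map $g_\eps$) reduces the statement to proving convergence in $\mathcal{D}$ of the rescaled velocity $(\eps V_{t/\eps})_{t\geq \eps t_0}$; the argument there applies verbatim with exponent $1$ in place of $1/\alpha$.

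The first step is an analogue of \cref{explosion_time} under \eqref{H2}: namely, for every $\kappa \in [0, \alpha_0]$ there is a constant $C_{\gamma,\kappa,\beta,t_0}$ with $\mathbb{E}[|V_t|^\kappa] \leq C\, t^{p_{\alpha_0}(\gamma,\kappa)}$, where $p_{\alpha_0}(\gamma,\gamma) = p_{\alpha_0}(\gamma)$ is the exponent of the statement. The strategy of \cref{esperance3} adapts: apply Itô's formula to the regularization $f(v) = (\eta + v^2)^{\kappa/2}$, then control separately the pure-jump martingale part $M$, the compensated small-jump correction $S$ (which is bounded via second-order Taylor expansion together with $\int_{|z|<1} z^2\,\nu(\dd z) < +\infty$), and the big-jump integral $R$, which under \eqref{H2} is controlled by $\int_{|z|\geq 1}|z|^\kappa\,\nu(\dd z) < +\infty$ for every $\kappa \in [0, \alpha_0]$. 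The drift term is handled either by its sign under \eqref{hyp_sign}/$vF(v)\geq 0$ or by $|F(v)| \leq K|v|^\gamma$ followed by a Gronwall argument; iterating the recursion $p_{\alpha_0}(\gamma,\kappa) = 1 + p_{\alpha_0}(\gamma,\kappa/2)$ and concluding with Jensen's inequality produces the listed exponents. Non-explosion then follows from \cref{explosion}.

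The second step mirrors the computation in the proof of \cref{main_thm_inh_levy}. Writing, for $\eps \in (0,1]$ and $t \geq \eps t_0$,
\[
\eps V_{t/\eps} = \eps(v_0 - L_{t_0}) + \eps L_{t/\eps} - \eps^{\beta}\int_{\eps t_0}^{t} \eps^{\gamma-1} F(\eps^{-1} \cdot \eps V_{u/\eps})\, u^{-\beta}\,\dd u,
\]
one uses $|F(v)| \leq K|v|^{\gamma}$ from \eqref{hyp1_levy} and the moment estimate of Step 1 to obtain
\[
\mathbb{E}\Bigl[\sup_{\eps t_0 \leq t \leq T}\bigl|\eps V_{t/\eps} - \eps L_{t/\eps}\bigr|\Bigr] = O(\eps^{q})
\]
for some $q > 0$, the assumption $\beta > p_{\alpha_0}(\gamma)$ being precisely what ensures positivity of the exponent (as in the computation following \eqref{eq:V and L}). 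Combining this with the convergence in law $\eps L_{t/\eps} \Longrightarrow \mathcal{L}_t$ in $\mathcal{D}$ supplied by \cref{limit_levy}\,(iii), together with \cref{Skoro}, yields convergence of the full pair $(\eps V_{t/\eps}, \eps^2 X_{t/\eps})$ to $(\mathcal{L}_t, \int_0^t \mathcal{L}_s\,\dd s)$.

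The main obstacle is Step 1. Without self-similarity, one cannot replace a stable moment computation on $L$ by a scaling identity, so the split of the Poisson measure must be handled purely by the integrability condition \eqref{H2}; the bootstrap $p_{\alpha_0}(\gamma,\kappa) = 1 + p_{\alpha_0}(\gamma,\kappa/2)$ used in \cref{esperance3} has to be replayed with $\alpha$ replaced by $\alpha_0$, and the cutoffs at the stopping time $\tau_r$ used to justify integrability must be tracked until a final application of \cref{explosion} delivers non-explosion. Once the exponents are in hand, Steps 2 and 3 are essentially identical to the stable case, replacing $1/\alpha$ by $1$ throughout.
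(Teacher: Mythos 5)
Your proposal follows essentially the same route as the paper's proof: reduce to convergence of the rescaled velocity, adapt the moment estimates of \cref{s:moment} under \eqref{H2} (It\^o's formula with $(\eta+v^2)^{\kappa/2}$, the recursion $p(\gamma,\kappa)=1+p(\gamma,\kappa/2)$ run with $\alpha_0$, drift handled by sign or Gronwall, non-explosion via \cref{explosion}), and then compare $\eps V_{t/\eps}$ with $\eps L_{t/\eps}$, whose limit is supplied by \cref{limit_levy}~(iii), concluding with \cref{Skoro}. The only blemish is a spurious factor $\eps^{\gamma-1}$ in your displayed rescaled equation (after using \eqref{hyp1_levy} the drift integral's total prefactor should be $\eps^{\beta-\gamma}\cdot\eps^{\gamma}=\eps^{\beta}$), but this slip does not propagate, since your final exponent $q$ and the positivity condition $\beta>p_{\alpha_0}(\gamma)$ coincide with the paper's.
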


\subsection{Large time behavior of the Lévy driving process}\label{ss: behav_levy}
Since the Lévy noise is no longer self-similar, we need to study its large-time behavior. We dedicate this subsection to the study of the convergence in distribution of the rescaled noise
$(L_t^{(\eps)})_{t\geq0}:= (v_{\eps}L_{t/\eps})_{t\geq0}$, for a suitable rate $v_{\eps}$, tending to 0.
\\
We introduce another assumption on $\nu$, which will sometimes be imposed in the sequel:
\begin{equation}\tag{$H_g$}\label{Hg}
    \begin{gathered}
        \nu( z)=\frac{g(z)}{\abs{z}^{1+\alpha}}\mathbb{1}_{\{z\neq 0\}}, \mbox{ where $g$ is a non negative measurable function with}\\
    \int_{1}^{+\infty}\dfrac{\abs{g(z)-g(-z)}}{z^{\alpha}}\dd x <+\infty.
    \end{gathered}
\end{equation}
Inspired from \cite{RosenbaumAsymptoticresultstimechanged2011}, we get the following result.
\begin{prop}\label{limit_levy} Let $(L_t)_{t\geq0}$ be a Lévy process having generating triplet $(A,\nu, b)$, with respect to the truncation function $h:z\mapsto -1\vee (z\wedge 1)$.
    \begin{enumerate}[label=(\roman*)]
        \item Assume that the Lévy measure satisfies the condition \eqref{H1} with $\alpha=1$, $c^+=c^-=c$ and \eqref{Hg}. Then the process $\left(\eps L_{t/\eps}\right)_{t\geq 0}$ converges in distribution to the strictly 1-stable Lévy process $\mathcal{L}$ generated by $(0,\nu^*,b^*)$, where $b^*:=b+\int_1^{+\infty} \frac{g(z)-g(-z)}{z^{1+\alpha}}\left(z-h(z)\right)\dd z$ and \[ \nu^*(\dd z) := \dfrac{c\mathbb{1}_{\{z\neq 0\}}}{\abs{z}^{2}}\dd z.\]
        \item Suppose that the Lévy measure satisfies the condition \eqref{H1} with $1<\alpha<2$ and \eqref{Hg}. If \[b^*:= b+\int_1^{+\infty} \frac{g(z)-g(-z)}{z^{1+\alpha}}\left(z-h(z)\right)\dd z \neq 0,\] then the process $\left(\eps L_{t/\eps}\right)_{t\geq 0}$  converges in distribution to the deterministic Lévy process $\mathcal{L}$ generated by $(0,0,b^*)$.
        \item If the Lévy measure satisfies \eqref{H2} and $b^* := b+\int_{\abs{z}\geq1} \left(z-h(z)\right)\nu(\dd z)\neq 0$, then the process $\left(\eps L_{t/\eps}\right)_{t\geq 0}$  converges in distribution to the deterministic Lévy process $L^*$ generated by $(0,0,b^*).$
        \item Assuming that $0<\alpha<1$, if the Lévy measure satisfies the condition \eqref{H1}, then the process $\left(\eps^{\frac{1}{\alpha}} L_{t/\eps}\right)_{t\geq 0}$ converges in distribution to the $\alpha$-stable Lévy process $\mathcal{L}$ with 
        \[\nu^*(\dd z) := \dfrac{c^+\mathbb{1}_{\{z>0\}}+c^-\mathbb{1}_{\{z<0\}}}{\abs{z}^{1+\alpha}}\dd z. \] 
        \item Assuming that $1<\alpha<2$ and $b=0$, if the Lévy measure satisfies the condition \eqref{H1}, then the process $\left(\eps^{\frac{1}{\alpha}} L_{t/\eps}\right)_{t\geq 0}$ converges in distribution to the Lévy process $\mathcal{L}$ with measure and center
        \[\nu^*(\dd z) := \dfrac{c^+\mathbb{1}_{\{z>0\}}+c^-\mathbb{1}_{\{z<0}\}}{\abs{z}^{1+\alpha}}\dd z, \quad  b^* := \int_{\mathbb{R}^*}z\nu^*(\dd z). \] 
    \end{enumerate}
\end{prop}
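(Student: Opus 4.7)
The plan is to invoke the classical criterion for convergence of Lévy processes in terms of their generating triplets (see e.g.\ Jacod--Shiryaev, Theorem~VII.3.4, or Sato, Theorem~8.7): a family $L^{(\eps)}$ of Lévy processes with triplets $(A_\eps,\nu_\eps,b_\eps)$ (relative to a fixed truncation $h$) converges in $\mathcal{D}$ to a Lévy process with triplet $(A^*,\nu^*,b^*)$ provided $b_\eps\to b^*$, $\int f\,d\nu_\eps\to\int f\,d\nu^*$ for every bounded continuous $f$ vanishing in a neighborhood of $0$, and $A_\eps + \int_{|z|\leq 1}z^2\,\nu_\eps(dz)\to A^* + \int_{|z|\leq 1}z^2\,\nu^*(dz)$. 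Independence of increments upgrades convergence of the marginal characteristic functions automatically to functional convergence.

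First, I would write out the triplet of $L^{(\eps)}_t := v_\eps L_{t/\eps}$. Since this process is itself Lévy with characteristic exponent $\psi_\eps(u) = \eps^{-1}\psi(v_\eps u)$, one reads off
\[
A_\eps = \tfrac{v_\eps^2}{\eps}A,
\qquad
\nu_\eps(B) = \tfrac{1}{\eps}\nu\bigl(v_\eps^{-1}B\bigr),
\qquad
b_\eps = \tfrac{v_\eps b}{\eps} + \tfrac{1}{\eps}\int_\mathbb{R}\bigl(h(v_\eps z)-v_\eps h(z)\bigr)\nu(dz).
\]
For (iv)--(v) with $v_\eps=\eps^{\nicefrac{1}{\alpha}}$: $A_\eps\to 0$ because $\alpha<2$; the substitution $z'=\eps^{\nicefrac{1}{\alpha}}z$ rewrites the density of $\nu_\eps$ as $g(z'/\eps^{\nicefrac{1}{\alpha}})/|z'|^{1+\alpha}$, which tends to $\nu^*$ by dominated convergence and \eqref{H1}; the constant part $\eps^{(1-\alpha)/\alpha}b$ of the drift vanishes in case (iv) and is zero by hypothesis in case (v). For (i)--(iii) with $v_\eps=\eps$: $A_\eps\to 0$, and $\nu_\eps(dz)=\eps^{\alpha-1}g(z/\eps)/|z|^{1+\alpha}\,dz$ converges to $\nu^*$ when $\alpha=1$ and to zero when $\alpha>1$ under \eqref{H1}, while under \eqref{H2} the rescaled Lévy measure vanishes since the large jumps of $L$ rescale below any prescribed level. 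This explains why in (ii) and (iii) the limit is deterministic and in (i) it is genuinely 1-stable.

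The main obstacle is the careful analysis of the drift correction $\eps^{-1}\int(h(v_\eps z)-v_\eps h(z))\,\nu(dz)$, for which \eqref{Hg} (respectively \eqref{H2}) is indispensable. Splitting the integrand according to $|z|\leq 1$, $1<|z|\leq v_\eps^{-1}$ and $|z|>v_\eps^{-1}$: on the first region both $h(v_\eps z)$ and $v_\eps h(z)$ equal $v_\eps z$, giving no contribution; the middle region produces $v_\eps(z-\sgn(z))$, whose rescaled integral against $\nu$ reduces, via the antisymmetrization $g(z)-g(-z)$, to the advertised $\int_1^{+\infty}\tfrac{g(z)-g(-z)}{z^{1+\alpha}}(z-h(z))\,dz$ precisely under \eqref{Hg}; the outer region contributes $(1-v_\eps)\sgn(z)$ integrated against $\nu$, which is negligible for $\alpha\in(1,2)$ but must be combined with the middle one in the critical case $\alpha=1$, and is directly controlled by the integrability in \eqref{H2} for case (iii). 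Gathering these convergences and applying the criterion completes the proof in each of the five cases, following the inspiration of \cite{RosenbaumAsymptoticresultstimechanged2011}.
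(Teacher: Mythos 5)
Your overall strategy is the paper's: compute the triplet $(A^{\eps},\nu^{\eps},b^{\eps})$ of $v_{\eps}L_{t/\eps}$ and invoke the Jacod--Shiryaev/Sato convergence-of-triplets criterion, and your treatment of the Gaussian part, of the rescaled Lévy measures, and of the drift in cases \textit{(i)}--\textit{(iii)} (splitting $\abs{z}\leq 1$, $1<\abs{z}\leq v_{\eps}^{-1}$, $\abs{z}>v_{\eps}^{-1}$, antisymmetrizing with $g(z)-g(-z)$ under \eqref{Hg}, and using the $\alpha_0$-integrability under \eqref{H2}) is essentially the proof in the paper.

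There is, however, a genuine gap in cases \textit{(iv)} and \textit{(v)}. For $v_{\eps}=\eps^{\nicefrac{1}{\alpha}}$ you only dispose of the ``constant part'' $\eps^{\nicefrac{1}{\alpha}-1}b$ of the drift, but you never analyse the correction term $\eps^{\nicefrac{1}{\alpha}-1}\int_{\mathbb{R}^*}\bigl(\eps^{-\nicefrac{1}{\alpha}}h(\eps^{\nicefrac{1}{\alpha}}z)-h(z)\bigr)\nu(\dd z)$, and your closing paragraph does not cover it: the reduction to $\int_1^{+\infty}\frac{g(z)-g(-z)}{z^{1+\alpha}}(z-h(z))\dd z$ is specific to $v_{\eps}=\eps$, and it leans on \eqref{Hg}, which is \emph{not} assumed in \textit{(iv)}--\textit{(v)} (only \eqref{H1} is). This term is not negligible: for $\alpha\in(0,1)$ the middle region $1<\abs{z}\leq\eps^{-\nicefrac{1}{\alpha}}$ contributes $\eps^{\nicefrac{1}{\alpha}-1}\int(z-\sgn(z))\nu(\dd z)$, and the correct statement is that $b^{\eps}$ converges to $\frac{c^+-c^-}{\alpha(1-\alpha)}=\int_{\mathbb{R}^*}h(z)\nu^*(\dd z)\neq 0$ in general; identifying this limit with $\int h\,\dd\nu^*$ is exactly what makes the limiting process the advertised driftless (uncompensated) $\alpha$-stable process rather than one with an extra drift, and proving it is the bulk of the paper's work here (a limsup/liminf argument over $[\eta,\eps^{-\nicefrac{1}{\alpha}}]$ and $[\eps^{-\nicefrac{1}{\alpha}},+\infty)$ using only that $g$ has limits $c^{\pm}$ at $\pm\infty$). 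If one simply asserted, as your wording suggests, that the drift of $L^{(\eps)}$ tends to $0$, the criterion would identify a different limit law whenever $c^+\neq c^-$. The same omission affects \textit{(v)}: setting $b=0$ kills only $\eps^{\nicefrac{1}{\alpha}-1}b$, while the correction integral still has to be computed (the paper does it by a change of variables and dominated convergence to obtain the stated $b^*$), so your argument as written does not establish \textit{(iv)} or \textit{(v)}.
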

\begin{proof}
    From the Lévy-Khintchine's formula, the generating triplet of $(L^{(\eps)})_{t\geq 0} := (v_{\eps}L_{t/\eps})_{t\geq 0}$ is given by 
\begin{equation}\label{diff}
    A^{\eps}=\dfrac{v_{\eps}^2}{\eps}A,
\end{equation}
\begin{equation}\label{measure}
    \text{for all }B\in \mathcal{B}(\mathbb{R}), \ \nu^{\eps}(B)= \eps^{-1}\nu(\{z, zv_{\eps}\in B \}),
\end{equation}
\begin{equation}\label{drift}
    b^{\eps}=\dfrac{v_{\eps}}{\eps}\left[b+ \int_{\mathbb{R}^*}\left(v_{\eps}^{-1}h(v_{\eps}z)-h(z) \right) \nu(\dd z) \right].
\end{equation}
Call $(A^*, \nu^*, b^*)$ the generating triplet of the limiting process $\mathcal{L}$. By \citer[Corollary 3.6 p. 415]{JacodGeneralTheoryStochastic2003} and \citer[Theorem 14.7 p. 81]{SatoLevyprocessesinfinitely1999}, we have to check that
\begin{equation}\label{cond_drift}
    b^{\eps} \quad \underset{\eps \to 0}{\longrightarrow} \quad  b^*,
\end{equation}
\begin{equation}\label{cond_diff}
    A^{\eps}+\int_{\mathbb{R}^*}h^2(z)\nu^{\eps}(\dd z) \quad \longrightarrow \quad A^{*}+\int_{\mathbb{R}^*}h^2(z)\nu^{*}(\dd z),
\end{equation}
and that for any continuous and bounded function $f$ which is zero in a neighborhood of zero,
\begin{equation}\label{cond_measure}
    \int_{\mathbb{R}^*}f(z)\nu^{\eps}(\dd z) \quad \underset{\eps \to 0}{\longrightarrow}\quad \int_{\mathbb{R}^*}f(z)\nu^*(\dd z).
\end{equation}
\begin{enumerate}[label=(\roman*)]
    \item Recall that $\alpha=1$ and assume that the Lévy measure $\nu$ satisfies the conditions \eqref{H1} and \eqref{Hg}.\\
    To prove \eqref{cond_drift}, we write $b^{\eps}$ as

\[b^{\eps}=b+\int_0^{+\infty} \dfrac{g(z)-g(-z)}{z^{1+\alpha}}\left(\eps^{-1}h(\eps z)-h(z)\right)\dd z. \]
The dominated convergence theorem can be applied, and we show that it converges to
\[b^*=b+\int_1^{+\infty} \dfrac{g(x)-g(-x)}{x^{2}}\left(x-h(x)\right)\dd x.\]
Observe that the condition \eqref{Hg} was only required for this step.
\\Afterwards, note that, using a change of variables,

\[\begin{aligned}
    \int_{\mathbb{R}^*}h^2(z)\nu^{\eps}(\dd z) = v_{\eps}^{\alpha}\eps^{-1}\int_{\mathbb{R}^*}h^2(z)\dfrac{g(zv_{\eps}^{-1})}{\abs{z}^{1+\alpha}}\dd z,
\end{aligned} \]
and thus, we can apply the dominated convergence theorem to prove that the last integral converges to \[\int_{\mathbb{R}^*}h^2(z)\nu^{*}(\dd z).  \]
\\Let $f$ be a continuous and bounded function which is zero in a neighborhood of zero, then, using again a change of variables and applying the dominated convergence theorem,
\[\int_{\mathbb{R}^*}f(z)\nu^{\eps}(\dd z)\quad \underset{\eps \to 0}{\longrightarrow} \quad \int_{\mathbb{R}^*}f(z)\nu^{*}(\dd z). \]
%
    \item The proof is similar to the previous one and thus is left to the reader.
    \item In this point, we assume that \eqref{H2} holds.
    \\ The convergence \eqref{cond_drift} follows from the dominated convergence theorem. 
    \\ Using the explicit form of the truncation function, we get, for $v_{\eps}<1$,
    \begin{equation}\begin{aligned} \label{eq: hcarre}
        \int_{\mathbb{R}^*}h^2(z)\nu^{\eps}(\dd z)&= \int_{\mathbb{R}^*}\eps^{-1}h^2(v_{\eps}z)\nu(\dd z) \\
        &= \eps\int_{0<\abs{z}< 1}z^2\nu(\dd z)+\int_{1\leq \abs{z}\leq \eps^{-1}}\eps z^2\nu(\dd z) +\int_{\abs{x}> \eps^{-1}}\eps^{-1}\nu(\dd z).
    \end{aligned} \end{equation}
    Using the property of a Lévy measure, the first term in \eqref{eq: hcarre} converges to zero, as $\eps \to 0$, by assumption.
    Then, the last two terms in \eqref{eq: hcarre} are lower than
    \[\begin{aligned}
        \eps^{\alpha_0-1}\int_{1<\abs{z}< \eps^{-1}} \abs{z}^{\alpha_0}\nu(\dd z)+ \eps^{\alpha_0-1}\int_{\abs{z}\geq \eps^{-1}} \abs{z}^{\alpha_0}\nu(\dd z),
    \end{aligned}\] which converges to zero, when $\eps$ goes to zero, since $\alpha_0>1$.
    \\
    Let $f$ be a continuous and bounded function and assume that there exists $\delta>0$ such that $f(z)=0$ for all $\abs{z}\leq \delta$, thus,
\[ \int_{\mathbb{R}^*}f(z)\nu^{\eps}(\dd z)= \int_{\abs{z}>\frac{\eta}{\eps}}\eps^{-1}f(v_{\eps}z)\nu(\dd z)\leq 
    C\eps^{\alpha_0-1}\int_{\abs{z}>\frac{\eta}{\eps}}\abs{z}^{\alpha_0}\nu(\dd z).
\]
This vanishes as $\eps \to 0$.

    \item Take $0<\alpha<1$. 
    \\ Using the explicit form of $h$, giving in \cref{limit_levy}, we have,
    \[\begin{aligned}
        b^{\eps}&=\eps^{\frac{1}{\alpha}-1}b + \int_{-\infty}^{-\eps^{-\frac{1}{\alpha}}} [\eps^{\frac{1}{\alpha}-1}-\eps^{-1}]\nu(\dd z)+\int_{\eps^{-\frac{1}{\alpha}}}^{+\infty}[\eps^{-1}-\eps^{\frac{1}{\alpha}-1}]\nu(\dd z)\\& + \eps^{\frac{1}{\alpha}-1}\int_{-\eps^{-\frac{1}{\alpha}}}^{-1}(z+1)\nu(\dd z)+\eps^{\frac{1}{\alpha}-1}\int_{1}^{\eps^{-\frac{1}{\alpha}}}(z-1)\nu(\dd z).
    \end{aligned} \]
    Since $g$ has finite limits at infinity, for any $\delta>0$, we can choose $\eta>1$ big enough so that $\abs{g(z)-c^+}<\delta$ for $z\geq \eta$, and $\abs{g(z)-c^-}<\delta$ for $z\leq -\eta$. \\
    Hence,
    \[ \begin{aligned}
        \limsup_{\eps \to 0}\eps^{\frac{1}{\alpha}-1}\int_{1}^{\eps^{-\frac{1}{\alpha}}}(z-1)\nu(\dd z) &= \limsup_{\eps \to 0}\eps^{\frac{1}{\alpha}-1}\int_{\eta}^{\eps^{-\frac{1}{\alpha}}}(z-1)\nu(\dd z) \\ &\leq \limsup_{\eps \to 0} \eps^{\frac{1}{\alpha}-1}(c^++\delta)\int_{\eta}^{\eps^{-\frac{1}{\alpha}}}\dfrac{x-1}{x^{1+\alpha}}\dd x 
        =\dfrac{c^++\delta}{1-\alpha}.
    \end{aligned}\]
    Similarly,
    \[ \begin{aligned}
        \liminf_{\eps \to 0}\eps^{\frac{1}{\alpha}-1}\int_{1}^{\eps^{-\frac{1}{\alpha}}}(z-1)\nu(\dd z) &= \liminf_{\eps \to 0}\eps^{\frac{1}{\alpha}-1}\int_{\eta}^{\eps^{-\frac{1}{\alpha}}}(z-1)\nu(\dd z) \\ &\geq \liminf_{\eps \to 0} \eps^{\frac{1}{\alpha}-1}(c^+-\delta)\int_{\eta}^{\eps^{-\frac{1}{\alpha}}}\dfrac{z-1}{z^{1+\alpha}}\dd z 
        =\dfrac{c^+-\delta}{1-\alpha}.
    \end{aligned}\]
    The choice of $\delta$ being arbitrary, we get
    \[ \lim_{\eps \to 0}\eps^{\frac{1}{\alpha}-1}\int_{1}^{\eps^{-\frac{1}{\alpha}}}(z-1)\nu(\dd z) =
        \dfrac{c^+}{1-\alpha}.\]
        If $\eps$ is small enough, 
        then we can upper bound
        \[ \begin{aligned}
            \limsup_{\eps \to 0}\int_{\eps^{-\frac{1}{\alpha}}}^{+\infty}[\eps^{-1}-\eps^{\frac{1}{\alpha}-1}]\nu(\dd z) &\leq \limsup_{\eps \to 0}(\eps^{-1}-\eps^{\frac{1}{\alpha}-1})(c^++\delta)\nu([\eps^{-\frac{1}{\alpha}}, +\infty)) \\ &=
            \limsup_{\eps \to 0}(1-\eps^{\frac{1}{\alpha}})\dfrac{c^++\delta}{\alpha} =
            \dfrac{c^++\delta}{\alpha}.
        \end{aligned}\]
        Moreover,
        \[ \begin{aligned}
            \liminf_{\eps \to 0}\int_{\eps^{-\frac{1}{\alpha}}}^{+\infty}[\eps^{-1}-\eps^{\frac{1}{\alpha}-1}]\nu(\dd z) &\geq \liminf_{\eps \to 0}(\eps^{-1}-\eps^{\frac{1}{\alpha}-1})(c^+-\delta)\nu([\eps^{-\frac{1}{\alpha}}, +\infty]) \\ &=
            \liminf_{\eps \to 0}(1-\eps^{\frac{1}{\alpha}})\dfrac{c^+-\delta}{\alpha} =
            \dfrac{c^+-\delta}{\alpha}.
        \end{aligned}\]
        Similarly, this leads to
        \[\lim_{\eps \to 0}\eps^{\frac{1}{\alpha}-1}\int_{-\eps^{-\frac{1}{\alpha}}}^{-1}(z-1)\nu(\dd z) =-\dfrac{c^-}{1-\alpha}, \]
        and,
        \[ \lim_{\eps \to 0}\int_{-\infty}^{-\eps^{-\frac{1}{\alpha}}}[\eps^{\frac{1}{\alpha}-1}-\eps^{-1}]\nu(\dd z)=-\dfrac{c^-}{\alpha}. \]
        Hence, we obtain
        \[\lim_{\eps \to 0}b^{\eps} = \dfrac{c^+-c^-}{\alpha(1-\alpha)}. \]
        Since, $\int_{\mathbb{R}^*}h(z)\nu^{*}(\dd z)= \frac{c^+-c^-}{\alpha(1-\alpha)}$, the drift coefficient of the limiting process equals zero.
        \\  The proof of \eqref{cond_diff} and \eqref{cond_measure} are identical to the one done in \textit{(i)}.
    \item Take $1<\alpha<2$ and assume that $b=0$. 
    \\ After a change of variables, we can apply the dominated convergence theorem to
    \[\begin{aligned}
    b^{\eps}= \int_{\mathbb{R}^*}\left(h(y)-\eps^{\frac{1}{\alpha}}h(y\eps^{-\frac{1}{\alpha}})\right)\dfrac{g(y\eps^{-\frac{1}{\alpha}})}{\abs{y}^{1+\alpha}}\dd y.
\end{aligned}\]
The proof of \eqref{cond_diff} and \eqref{cond_measure} are identical to the one done in \textit{(i)}.
\end{enumerate}
\end{proof}

\subsection{Proofs of Theorems \ref{gene_thm_levy} and \ref{gene_thm_levy2}} \label{ss:proof extended}
In this section, we suppose that $\beta >1+p_{\alpha}(\gamma)-\theta$, where $\theta$ is the exponent of the rate of convergence given in \cref{limit_levy} and $p_{\alpha}(\gamma)$ is given in the statement of Theorems \ref{gene_thm_levy} and \ref{gene_thm_levy2}. Recall that $(V_t)_{t\geq t_0}$ is a solution of \eqref{equation time_levy}.
\subsubsection{Moment estimates of the velocity process}\label{subsect:moment}
As in \cref{s:moment_levy}, we will show that there exists a constant $C_{\gamma,\kappa,\beta,t_0}$ such that \begin{equation}
    \forall t\geq  t_0, \ \mathbb{E}\left[ \abs{V_{t}}^{\kappa} \right]\leq C_{\gamma,\kappa,\beta,t_0,b}t^{p_{\alpha}(\gamma,\kappa)},
\end{equation}
 where $p_{\alpha}(\gamma,\kappa)$ has to be detailed.
\begin{prop}\label{gene_esperance} Pick $\alpha\in (0,1)$. Assume \eqref{H1} and \eqref{hyp_sign_stable}. For any $\gamma$, $\beta$, the explosion time $\tau_{\infty}$ is a.s.\ infinite and for all $\kappa \in [0,\alpha)$, there exists $C_{\kappa,t_0,b}$ such that, we have
    \begin{equation*}
        \forall t\geq  t_0, \ \mathbb{E}\left[ \abs{V_{t}}^{\kappa} \right]\leq C_{\kappa,t_0,b}t^{\frac{\kappa}{\alpha}},  \text{ or equivalently } p_{\alpha}(\gamma,\kappa)=\frac{\kappa}{\alpha}.
    \end{equation*}
\end{prop}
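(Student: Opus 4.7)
The plan is to mirror the proof of \cref{esperance} for the general Lévy setting under \eqref{H1}, handling both the extra drift that now appears and replacing the self-similarity argument used for the stable subordinator by a direct tail estimate tailored to \eqref{H1}.

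First, since $\alpha\in(0,1)$ and $g$ is locally bounded (as is implicit if $\nu$ is to define a pure-jump process under \eqref{H1}), we have $\int_{0<\abs{z}<1}\abs{z}\nu(\dd z)<+\infty$, so I would rewrite the driving process in the drift-plus-pure-jump form
\[
L_t = b't + \sum_{s\leq t}\Delta L_s, \qquad b':=b-\int_{0<\abs{z}<1}z\,\nu(\dd z).
\]
Applying Itô's formula to $f_n(V_{\cdot\wedge \tau_r-})$ with $f_n(v)=\sqrt{v^2+1/n}$ exactly as in the proof of \cref{esperance}, the sign assumption \eqref{hyp_sign} together with $\sgn(f_n'(v))=\sgn(v)$ ensures the $F$-contribution is non-negative and may be discarded, while $\|f_n'\|_{\infty}\leq 1$ controls the jumps, yielding
\[
\abs{V_{t\wedge\tau_r-}} \leq \abs{v_0} + \abs{b'}(t-t_0) + L^+_{t\wedge\tau_r},\qquad L^+_t:=\sum_{s\leq t}\abs{\Delta L_s}.
\]
Taking the $\kappa$-th power with $\kappa<\alpha<1$, subadditivity of $x\mapsto x^\kappa$ reduces matters to controlling $\mathbb{E}[(L^+_t)^\kappa]$, since the first two terms give $O(t^\kappa)\leq C_{t_0}t^{\kappa/\alpha}$ on $[t_0,+\infty)$.

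The main difficulty is the estimate $\mathbb{E}[(L^+_t)^\kappa]\leq C t^{\kappa/\alpha}$: in the strictly $\alpha$-stable case of \cref{esperance} this was immediate from self-similarity, but here $L^+$ is only asymptotically stable. My plan is to exploit the fact that the Lévy measure of $L^+$,
\[
\nu^+(\dd z) = \frac{g(z)+g(-z)}{\abs{z}^{1+\alpha}}\mathbb{1}_{z>0}\dd z,
\]
still has the power-law tail prescribed by \eqref{H1}, and to split the jumps at the natural stable scale $M_t:=t^{\nicefrac{1}{\alpha}}$. For the small-jump part $L^{+,\leq M_t}_t$, Campbell's formula together with the local boundedness of $g$ gives $\mathbb{E}[L^{+,\leq M_t}_t]\leq C_{t_0}\, t^{\nicefrac{1}{\alpha}}$, and Jensen's inequality (valid since $\kappa\leq 1$) delivers $\mathbb{E}[(L^{+,\leq M_t}_t)^\kappa]\leq C_{t_0}t^{\kappa/\alpha}$. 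For the large-jump part, subadditivity of $x\mapsto x^\kappa$ inside the jump sum together with Campbell's formula give $\mathbb{E}[(L^{+,>M_t}_t)^\kappa]\leq t\int_{z>M_t}z^\kappa\nu^+(\dd z) \leq C t\, M_t^{\kappa-\alpha} = C\, t^{\kappa/\alpha}$, using once more the boundedness of the tail of $g$ to turn the integral into a pure power. Combining these bounds, letting $r\to+\infty$ via Fatou's lemma, and invoking \cref{explosion} with $\phi(x)=x^\kappa$ and $b(t)=C_{\kappa,t_0,b}\,t^{\kappa/\alpha}$ will yield both the non-explosion of $V$ and the announced moment estimate.
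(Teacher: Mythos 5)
Your proof is correct and follows the same skeleton as the paper's argument (which simply declares itself ``analogous to the proof of \cref{esperance}''): apply Itô's formula with $f_n(v)=\sqrt{v^2+1/n}$, discard the drift-in-$F$ term by \eqref{hyp_sign}, absorb the Lévy drift as an $O(t)=O_{t_0}(t^{\kappa/\alpha})$ contribution, reduce everything to a $\kappa$-moment bound for $L^+_t=\sum_{s\le t}\abs{\Delta L_s}$, and conclude via Fatou and \cref{explosion}. The one place where you genuinely diverge is the key estimate $\mathbb{E}[(L^+_t)^{\kappa}]\le C t^{\kappa/\alpha}$: the paper obtains it by citing Theorem 3.1(c) of \cite{DengshiftHarnackinequalities2015}, whereas you rederive it directly by splitting the jumps of the subordinator $L^+$ at the stable scale $t^{1/\alpha}$ and using Campbell's formula together with the tail behaviour of $g$ from \eqref{H1} (Jensen for the small jumps, subadditivity of $x\mapsto x^{\kappa}$ for the large ones). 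Your route is more self-contained and makes transparent exactly which features of \eqref{H1} are used (boundedness of $g$ at infinity for the power-law tail, integrability of $z\,\nu(\dd z)$ near the origin for the small-jump mean), at the cost of a page of computation that the citation hides; the paper's route is shorter but leans entirely on the external moment theorem.

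Two small caveats. First, your claim that \eqref{H1} ``implicitly'' gives $\int_{0<\abs{z}<1}\abs{z}\,\nu(\dd z)<+\infty$ is not automatic: \eqref{H1} only constrains $g$ at $\pm\infty$, and a Lévy measure $g(z)\abs{z}^{-1-\alpha}\dd z$ with, say, $g(z)\sim\abs{z}^{\alpha-1}$ near $0$ satisfies $\int(1\wedge z^2)\nu<\infty$ while $\sum_{s\le t}\abs{\Delta L_s}=+\infty$ a.s. That said, the paper's own proof needs exactly the same finiteness for $L^+$ to make sense and for the cited moment bound, so this is a shared implicit hypothesis on $g$ near zero rather than a gap specific to your argument. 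Second, with the truncation $h(z)=-1\vee(z\wedge1)$ your drift constant should be $b'=b-\int_{\mathbb{R}^*}h(z)\,\nu(\dd z)$ rather than $b-\int_{0<\abs{z}<1}z\,\nu(\dd z)$; this is immaterial since only $\abs{b'}<\infty$ enters the bound and the final constant is allowed to depend on $b$.
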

\begin{proof}
    The proof is analogous to the proof of \cref{esperance_levy}. The term $\int_{t_0}^tbf'_n(V_s)\dd s$ is bounded by $\abs{b}t$ since $\|f_n'\|_{\infty}\leq 1$. Moreover, since $\nu$ satisfies \eqref{H1} with $\alpha<1$, the process $L_t^+:=\sum_{s\leq t} \abs{\Delta L_s}$ satisfies the conditions of Theorem 3.1 c) in \cite{DengshiftHarnackinequalities2015}. Thus, for all $\kappa\in [0,\alpha)$, for all $t\geq t_0$, 
    \[\mathbb{E}\left[\abs{L_t^+}^{\kappa} \right] \leq C_{t_0, \kappa}t^{\frac{\kappa}{\alpha}}.\]
    The estimates for $V$ follows.
\end{proof}

\begin{prop}\label{gene_esperance2} Assume either \eqref{H2} or \eqref{H1} with $\alpha\in (1,2)$. For any $\gamma \in [0,1)$ and any $ \beta\in \mathbb{R}$ the explosion time $\tau_{\infty}$ is a.s.\ infinite and for all $\kappa \in [0,1]$, there exists $C_{\gamma,\kappa,\beta,t_0}$ and $C_{\gamma, t_0}$ such that under \eqref{H1}, we have
    \begin{equation}\label{eq: moment 1_alpha gamma_1 bis}
        \forall t\geq  t_0, \ \mathbb{E}\left[ \abs{V_{t}}^{\kappa} \right]\leq C_{\gamma,\kappa,\beta,t_0}t^{\frac{\kappa}{\alpha}}+C_{\gamma, t_0}\abs{b}^{\kappa}t^{\kappa}.
    \end{equation}
    Or equivalently $p_{\alpha}(\gamma,\kappa)=\frac{\kappa}{\alpha}$ if $b=0$, and $p_{\alpha}(\gamma,\kappa)=\kappa$, else.\\
    And under \eqref{H2}, there exists $C_{\gamma,\kappa,\beta,t_0,b}$ such that we have
    \begin{equation*}
        \forall t\geq  t_0, \ \mathbb{E}\left[ \abs{V_{t}}^{\kappa} \right]\leq C_{\gamma,\kappa,\beta,t_0,b}t^{\kappa}.
    \end{equation*}
\end{prop}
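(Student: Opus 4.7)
The plan is to mirror the strategy of \cref{esperance2}: reduce to $\kappa=1$ via Jensen's inequality, derive a scalar integral inequality for $g_r(t):=\mathbb{E}\left[\abs{V_{t\wedge\tau_r-}}\right]$, close it with the Gronwall-type \cref{gronwall}, then pass $r\to+\infty$ via Fatou and invoke \cref{explosion} to rule out explosion. Since $\kappa\in[0,1]$, Jensen gives $\mathbb{E}\left[\abs{V_t}^{\kappa}\right]\leq \mathbb{E}\left[\abs{V_t}\right]^{\kappa}$, so it is enough to treat $\kappa=1$. Using \eqref{hyp1_levy} (hence $\abs{F(v)}\leq K\abs{v}^{\gamma}$) and the integrated form of \eqref{equation time_levy} yields, as in the proof of \cref{esperance2},
\[
    \abs{V_{t\wedge\tau_r-}}\leq \abs{v_0-L_{t_0}}+\abs{L_{t\wedge\tau_r-}}+K\int_{t_0}^{t\wedge\tau_r}s^{-\beta}\abs{V_{s\wedge\tau_r-}}^{\gamma}\dd s.
\]

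The key new ingredient, compared with the $\alpha$-stable setting, is a moment estimate on $L_t$ itself, since self-similarity is no longer available. Under \eqref{H2} the condition $\alpha_0>1$ ensures that the large-jumps component of the Lévy--Itô decomposition has finite first moment while the compensated small-jumps part is a square-integrable martingale (BDG gives an $O(t^{\nicefrac{1}{2}})$ bound); together with the linear drift, this yields $\mathbb{E}\left[\abs{L_t}\right]\leq C_b\,t$. Under \eqref{H1} with $\alpha\in(1,2)$, the jump measure behaves like that of an $\alpha$-stable process at infinity, so Kunita-type inequalities for compensated Poisson integrals (along the lines of \cite{LuschgyMomentestimatesLevy2006} and \cite{DengshiftHarnackinequalities2015}) give $\mathbb{E}\left[\abs{L_t-bt}\right]\leq C\,t^{\nicefrac{1}{\alpha}}$, so $\mathbb{E}\left[\abs{L_t}\right]\leq C\,t^{\nicefrac{1}{\alpha}}+\abs{b}\,t$.

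Taking expectation in the pre-stopped inequality and applying Jensen to $\abs{V_{s\wedge\tau_r-}}^{\gamma}$ (legitimate because $\gamma\in[0,1)$), we get the scalar inequality
\[
    g_r(t)\leq C_{t_0,b}\,h(t)+K\int_{t_0}^{t}s^{-\beta}g_r(s)^{\gamma}\dd s,
\]
with $h(t)=t^{\nicefrac{1}{\alpha}}+\abs{b}t$ under \eqref{H1} and $h(t)=t$ under \eqref{H2}. Since $g_r$ is bounded by $r$, \cref{gronwall} applies and delivers a bound of the form $g_r(t)\leq C\,h(t)+C\bigl(\int_{t_0}^t s^{-\beta}\dd s\bigr)^{\nicefrac{1}{(1-\gamma)}}$; the powers are arranged so that the deterministic linear term $\abs{b}t$ (or $t$ under \eqref{H2}) absorbs the correction from the $\beta$-integral, yielding the two-term estimate \eqref{eq: moment 1_alpha gamma_1 bis} in the \eqref{H1} case and $g_r(t)\leq C_{\gamma,\beta,t_0,b}\,t$ in the \eqref{H2} case. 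Passing $r\to+\infty$ by Fatou and applying \cref{explosion} with $\phi(x)=x$ gives $\tau_{\infty}=+\infty$ almost surely, after which Jensen transports the bound to every $\kappa\in[0,1]$.

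The main obstacle is the sharp one-sided moment bound on $L_t$ under \eqref{H1}, specifically isolating the $t^{\nicefrac{1}{\alpha}}$ scale of the compensated jump martingale while keeping the drift contribution $\abs{b}t$ separate; this requires care in splitting small and large jumps and in invoking a Kunita/BDG inequality adapted to the polynomial tail $\abs{z}^{-1-\alpha}$ of $\nu$. A secondary technicality is handling the case $\beta=1+\gamma$ in the Gronwall step (as in \cref{esperance2}), which has to be treated separately because the integral $\int s^{-\beta}\dd s$ produces a logarithm instead of a power.
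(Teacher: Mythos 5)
Your overall strategy is the paper's: reduce to $\kappa=1$ by Jensen, split $L_t=bt+\widehat{L}_t$, estimate the driftless part by Deng--Schilling/Kunita-type moment bounds, close the scalar inequality with \cref{gronwall}, pass to the limit by Fatou, and invoke \cref{explosion}. The genuine gap is where you use the bound $\mathbb{E}[\abs{L_t-bt}]\leq Ct^{\nicefrac{1}{\alpha}}$: you insert this \emph{fixed-time} estimate into the pre-stopped inequality, i.e.\ you bound $\mathbb{E}[\abs{\widehat{L}_{t\wedge\tau_r-}}]$ by $C(t^{\nicefrac{1}{\alpha}}+\abs{b}t)$. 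A fixed-time moment bound does not transfer to the random time $t\wedge\tau_r$: there is no $L^1$ Doob inequality, and the uncompensated large-jump component of $\widehat{L}$ evaluated at $t\wedge\tau_r$ is only controlled by $t\int_{\abs{z}\geq1}\abs{z}\,\nu(\dd z)$, i.e.\ linearly in $t$ (note also that $\mathbb{E}[\widehat{L}_t]=t\int_{\abs{z}\geq1}z\,\nu(\dd z)$ need not vanish, so the $t^{\nicefrac{1}{\alpha}}$ scale concerns the suitably centered process at deterministic times and needs the cancellation argument you only sketch). The paper handles exactly this point by a two-stage argument: it first adapts the proof of \citer[Theorem 3.1 (a)]{DengshiftHarnackinequalities2015} to the \emph{stopped} process, which yields only the crude bound $\mathbb{E}[\abs{\widehat{L}_{t\wedge\tau_r-}}^{\kappa}]\leq Ct$; this crude bound, combined with \cref{gronwall}, Fatou and \cref{explosion}, gives $\tau_{\infty}=+\infty$ a.s.; only then, with the stopping times removed, is the Gronwall step re-run on $\mathbb{E}[\abs{V_t}]$ using the sharp fixed-time estimate $\mathbb{E}[\abs{\widehat{L}_t}^{\kappa}]\leq Ct^{\nicefrac{\kappa}{\alpha}}$ under \eqref{H1}, which is what produces \eqref{eq: moment 1_alpha gamma_1 bis}. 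Your single-pass argument either needs this two-stage structure or a maximal (supremum-in-time, hence stopping-time-proof) version of the $t^{\nicefrac{1}{\alpha}}$ estimate, which you have not supplied.

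Two smaller points. Under \eqref{H2} your linear bound does survive stopping (it is exactly the adapted Theorem 3.1 (a) cited in the paper), but you should say so rather than silently evaluating a fixed-time bound at $t\wedge\tau_r$. Finally, the exceptional case in the Gronwall step is $\beta=1$, where $\int_{t_0}^{t}s^{-\beta}\dd s$ produces a logarithm, not $\beta=1+\gamma$ as you state.
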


\begin{proof}
    We explain the differences with respect to the proof of \cref{esperance2_levy}. Under each hypothesis on the Lévy measure $\nu$, the Lévy process has a first finite moment. 
    We write $L_t$ as the sum $bt+\widehat{L}_t$, where $\widehat{L}$ is the Lévy process without the drift part. \\We get,
   
    \[\mathbb{E}\left[ \abs{V_{(t \wedge \tau_r)-}}\right] \leq C_{t_0}\abs{b}t+\mathbb{E}\left[\abs{\widehat{L}_{(t \wedge \tau_r)-}} \right]+K\int_{t_0}^{t}s^{-\beta}\mathbb{E}\left[\abs{V_{s \wedge \tau_r} }\right]^{\gamma}\dd s.
     \]
    The proof of \citer[Theorem 3.1 (a) p. 3861]{DengshiftHarnackinequalities2015} can be adapted to estimate the moment of the Lévy process stopped at the stopping time $\tau_r$, given by \eqref{stopping_time_levy}. 
    \begin{enumerate}[label=(\roman*)]
        \item If the Lévy measure $\nu$ satisfies \eqref{H1} with $1<\alpha<2$, then it satisfies the conditions of Theorem 3.1 (a) and (c) in \cite{DengshiftHarnackinequalities2015}. Thus, for all $\kappa\in [0,1]$, there exists $C_{t_0, \kappa}$ such that, for all $t\geq t_0$,  
        \[\mathbb{E}\left[\abs{\widehat{L}_{(t\wedge \tau_r)-}}^{\kappa} \right] \leq C_{t_0, \kappa}t,\] 
        and, 
        \[\mathbb{E}\left[\abs{\widehat{L}_t}^{\kappa} \right] \leq C_{t_0, \kappa}t^{\frac{\kappa}{\alpha}}.\] 
        \item If the Lévy measure $\nu$ satisfies \eqref{H2}, then it satisfies the conditions of Theorem 3.1 (a) in \cite{DengshiftHarnackinequalities2015}. Thus, for all $\kappa\in [0,1]$, there exists $C_{t_0, \kappa}$ such that, for all $t\geq t_0$, 
        \[\mathbb{E}\left[\abs{\widehat{L}_{(t\wedge \tau_r)-}}^{\kappa} \right] \leq C_{t_0, \kappa}t.\]
    \end{enumerate}
    Applying the Grönwall-type lemma again (see \cref{gronwall_levy}) and Fatou's lemma, for $\beta\neq 1$, we end up with

    \[\forall t \geq t_0, \ \mathbb{E}\left[ \abs{V_{(t\wedge \tau_r)-}}  \right] \leq C_{\gamma} \left[C_{t_0,b}t+\left(\dfrac{1-\gamma}{1-\beta}K(t^{1-\beta}-t_0^{1-\beta})\right)^{\frac{1}{1-\gamma}}  \right].
\]
    The case $\beta=1$ can be done in a similar manner.\\
    We conclude that the explosion time of $V$ is a.s.\ infinite. To refine the estimates under the hypothesis \eqref{H1}, we apply again the Grönwall-type lemma to
    \[\mathbb{E}\left[ \abs{V_{t}}\right] \leq C_{t_0}\abs{b}t+\mathbb{E}\left[\abs{\widehat{L}_{t}} \right]+K\int_{t_0}^{t}s^{-\beta}\mathbb{E}\left[\abs{V_{s} }\right]^{\gamma}\dd s.
    \] 
 This proves \eqref{eq: moment 1_alpha gamma_1 bis} since $\frac{\gamma-1}{\alpha}+1-\beta\leq 0$.
\end{proof}

\begin{prop} Assume that for all $v\in \mathbb{R}$, $vF(v)\geq 0$. Pick $\gamma\in \mathbb{R}$ and $ \beta\in \mathbb{R}$.
    For each of the following cases, the explosion time $\tau_{\infty}$ is a.s.\ infinite.
    \begin{enumerate}[label=(\roman*)]
        \item Assume that \eqref{H1} holds with a bounded function $g$ and $\alpha\in [1,2)$, there exists $C_{\kappa,t_0}$ such that
        \begin{equation}\label{eq: moment 1_alphabis}
            \text{for }\kappa\in(0,\alpha), \ \forall t\geq t_0,\ \mathbb{E}\left[\abs{V_{t}}^{\kappa}\right]\leq C_{\kappa,t_0}t^{\frac{\kappa}{\alpha}}, \text{ or equivalently } p_{\alpha}(\gamma,\kappa)=\frac{\kappa}{\alpha}.
        \end{equation}
        \item Assume that \eqref{H1} holds with $\alpha\in [1,2)$. Then, \\
        for $\kappa\in[0,1]$ (resp. $\kappa\in[0,1)$, if $\alpha=1$), there exists $C_{\kappa,t_0}$ such that 
        \begin{equation}\label{eq: moment 1_alpha kappa_1bis}
            \forall t\geq t_0,\ \mathbb{E}\left[\abs{V_{t}}^{\kappa}\right]\leq C_{\kappa,t_0}t^{\kappa}, \text{ or equivalently } p_{\alpha}(\gamma,\kappa)=\kappa;
        \end{equation}
        for $\kappa\in(1,\alpha), \ \gamma \in[0,1) \text{ and }b=0$, there exists $C_{\kappa,t_0}$ such that
        \begin{equation}\label{eq: moment 1_alpha gamma_1 1_kappabis}
        \forall t\geq t_0,\ \mathbb{E}\left[\abs{V_{t}}^{\kappa}\right]\leq C_{\kappa,t_0}t^{\frac{3\kappa}{2\alpha}}, \text{ or equivalently } p_{\alpha}(\gamma,\kappa)=\frac{3\kappa}{2\alpha};
    \end{equation}
    for $\kappa\in(1,\alpha)$, there exists $C_{\kappa,t_0}$ such that
     \begin{equation}\label{eq: moment 1_alpha 1_kappabis}
        \forall t\geq t_0,\ \mathbb{E}\left[\abs{V_{t}}^{\kappa}\right]\leq C_{\kappa,t_0}t^{\frac{\kappa}{\alpha}+\frac{\kappa}{2}},\text{or equivalently } p_{\alpha}(\gamma,\kappa)=\frac{\kappa}{\alpha}+\frac{\kappa}{2}.
    \end{equation}
        \item Assume \eqref{H2}. Then,\\
        for $\kappa\in[0,1]$, there exists $C_{\kappa,t_0}$ such that,
         \begin{equation}\label{eq: moment 1_alpha kappa_1H2}
        \forall t\geq t_0,\ \mathbb{E}\left[\abs{V_{t}}^{\kappa}\right]\leq C_{\kappa,t_0}t^{\kappa}, \text{ or equivalently } p(\gamma,\kappa)=\kappa;
    \end{equation}
    for $\kappa\in[0,\alpha_0]$, there exists $C_{\kappa,t_0}$ such that 
    \begin{equation}\label{eq: moment 1_alpha 1_kappaH2}
       \forall t\geq t_0,\ \mathbb{E}\left[\abs{V_{t}}^{\kappa}\right]\leq C_{\kappa,t_0}t^{\frac{\kappa}{\alpha_0}+\frac{\kappa}{2}},\text{or equivalently } p(\gamma,\kappa)=\frac{\kappa}{\alpha_0}+\frac{\kappa}{2}.
    \end{equation}
    \end{enumerate}
\end{prop}
\begin{proof} We highlight only the differences with respect to the proof of \cref{esperance3_levy}. In the following, we assume that $\alpha \neq 1$, the proof is similar for $\alpha=1$. \\
    \textsc{Step A.} Assume that \eqref{H1} holds with a bounded function $g$.\\
    \textsc{Step A1.} Pick $\kappa\in [0,1]$. We adapt the estimates of the Itô's formula's terms. \\
    There is an additional term in \eqref{eq:moment_Ito_levy}, given, for $t\geq t_0$, by
    \[\int_{t_0}^{t}\mathbb{1}_{\{s\leq \tau_r\} } f'(V_s)b\dd s.\]
    If $g$ is a bounded function, the other terms of \eqref{eq:moment_Ito_levy} can be estimated in the same way, and \eqref{eq:moment_step1_levy} becomes
    \begin{multline}
        \mathbb{E}\left[\abs{V_{t\wedge \tau_r}}^{\kappa}\right] \leq \mathbb{E}\left[f(V_{t_0})\right]+t^{\frac{\kappa}{\alpha}+1-\frac{1}{\alpha}}\kappa \abs{b}+t^{\frac{\kappa}{\alpha}} \sup_{\mathbb{R}}\abs{g} \times \\  \left(\kappa \frac{a_+-a_-}{\alpha-1}+ \frac{a_++a_-}{\alpha }+\frac{a_++a_-}{\alpha-\kappa}+ \frac{1}{2} \kappa(3-\kappa) \frac{a_++a_-}{2-\alpha}\right)\leq C_{\kappa, t_0, b}t^{\frac{\kappa}{\alpha}}.
    \end{multline}
    This gives the proof of \eqref{eq: moment 1_alphabis} for $\kappa\in[0,1]$.\\
    \textsc{Step A2.} Pick $\kappa\in (1,\alpha)$. We estimate $R$, given by \eqref{eq: jumps_levy} in another way.\\ 
    The inequality \eqref{eq: moment intermediaire_levy} becomes 
    \begin{multline}\label{eq: moment intermediairebis}
        \mathbb{E}\left[\abs{V_{t\wedge \tau_r}}^{\kappa}\right]\leq \mathbb{E}\left[f(V_{t\wedge \tau_r})  \right] \leq \mathbb{E}\left[f(V_{t_0})\right]+t\left(C_{\kappa}\frac{a_++a_-}{\alpha-\kappa}\jsize^{\frac{\kappa}{\alpha}-1}+\frac{1}{2} \kappa(3-\kappa)\eta^{\frac{\kappa}{2}-1} \frac{a_++a_-}{2-\alpha}\jsize^{\frac{2}{\alpha}-1}\right)\\+ \kappa\left(\frac{a_+-a_-}{\alpha-1}\jsize^{\frac{1}{\alpha}-1}+\abs{b}\right)\int_{t_0}^{t}\mathbb{E}\left[\abs{V_s}^{\kappa-1} \right]\dd s+ C_{\kappa}\frac{a_++a_-}{\alpha-\frac{\kappa}{2}}\jsize^{\frac{\kappa}{2\alpha}-1}\int_{t_0}^{t}\mathbb{E}\left[\abs{V_s}^{\frac{\kappa}{2}} \right]\dd s.
        \end{multline}
    The inequality \eqref{eq: moment 1_alphabis} follows as in the proof of \cref{esperance3_levy}.\\
    \textsc{Step B.} When working under \eqref{H2}, we will pick $\kappa \leq \alpha_0$ during the proof.
   Assume that either \eqref{H1} holds with an unbounded function $g$ or \eqref{H2} holds, then we are not able to estimate the following terms
    \[\int_{\abs{z}> \jsize^{\frac{1}{\alpha}}} z\nu(\dd z) ,\quad \int_{0<\abs{z}<\jsize^{\frac{1}{\alpha}}} \abs{z}^{k} \nu(\dd z ), \quad \mbox{and}\quad \int_{\abs{z}\geq\jsize^{\frac{1}{\alpha}}} \abs{z}^{k} \nu(\dd z ).\]
    Hence, we apply the same proof scheme with small and big jumps sliced at 1. \\
    \textsc{Step B1.}
    This leads to a similar bound as in \eqref{eq:moment_step1_levy}:
    \begin{multline*}
        \mathbb{E}\left[\abs{V_{t\wedge \tau_r}}^{\kappa}\right]\leq \mathbb{E}\left[f(V_{t\wedge \tau_r})  \right] \leq \mathbb{E}\left[f(V_{t_0})\right]+\int_{t_0}^t \abs{b}\mathbb{E}\left[\abs{f'\left(V_{s\wedge \tau_r}\right)} \right] \dd s+ \mathbb{E}\left[t\wedge \tau_r\right] \times \\ \left(\eta^{\kappa/2} \nu(\abs{z}\geq 1)+\int_{\abs{z}\geq1} \abs{z}^{\kappa} \nu(\dd z)+ \frac{1}{2} \kappa(3-\kappa)\eta ^{\frac{\kappa}{2}-1} \int_{0<\abs{z}<1}z^2\nu(\dd z)\right) \leq C_{\kappa,t_0}t.
    \end{multline*}
    By Jensen's inequality, we can deduce \eqref{eq: moment 1_alpha kappa_1bis} and \eqref{eq: moment 1_alpha kappa_1H2}.\\
    \textsc{Step B2.} Pick $\kappa\in (1,\alpha)$. We estimate $R$, given by \eqref{eq: jumps_levy} in another way. \\ 
    By classical Hölder inequality,
    \begin{equation}\label{poissonintegralbis_levy}
        \int_{\abs{z}\geq 1} \abs{f(V_s+z)-f(V_s)} \nu(\dd z) \leq \int_{\abs{z}\geq 1} \abs{2zV_s+z^2}^{\frac{\kappa}{2}}\nu(\dd z)\leq C(1+\abs{V_s}^{\frac{\kappa}{2}})\int_{\abs{z}\geq 1}\abs{z}^{\kappa} \nu(\dd z)
    \end{equation}
    The last integral is finite. 
    Gathering \eqref{deriveeseconde_levy}, \eqref{poissonintegralbis_levy} and then using \eqref{eq: moment 1_alpha kappa_1bis}, \eqref{eq: moment 1_alpha kappa_1H2} or \cref{gene_esperance2},
       \[ \begin{aligned}
        \mathbb{E}\left[\abs{V_{t\wedge \tau_r}}^{\kappa}\right]\leq\mathbb{E}\left[f(V_{t\wedge \tau_r})  \right] \leq C_{\kappa, t_0}t + C\int_{t_0}^t
            \mathbb{E}\left[\abs{V_{s}}^{\frac{\kappa}{2}}\right]
            \dd s+ \int_{t_0}^t \abs{b}
            \mathbb{E}\left[\abs{V_{s}}^{\kappa-1} \right]
            \dd s.
    \end{aligned}\]
    Taking $r\to +\infty$, we can conclude that $p_{\alpha}(\gamma,\kappa)=1+p_{\alpha}(\gamma, \kappa/2)$.\\
    \textsc{Step B3.} We refine the estimates.
    Fix $\kappa\in[0,\alpha)$. There exists $\eps_1$ such that, $\kappa\leq \alpha-\eps$ and $\alpha-\eps > 1$.
    Hence, we can write
    \[\mathbb{E}\left[\abs{V_{t}}^{\alpha-\eps}\right]\leq C_{\kappa,t_0}t^{1+p_{\alpha}(\gamma, (\alpha-\eps)/2)}.\]
    Using Jensen's inequality, we get
    \[\mathbb{E}\left[\abs{V_{t}}^{\kappa}\right]\leq C_{\kappa,t_0}t^{\frac{\kappa}{\alpha-\eps}(1+p_{\alpha}(\gamma, (\alpha-\eps)/2))},\]
    and it suffices to let $\eps\to 0$ to conclude.
    This concludes the proof of \eqref{eq: moment 1_alpha gamma_1 1_kappabis} and \eqref{eq: moment 1_alpha 1_kappabis}
    The last step is identical under \eqref{H1} and can be done with $\alpha_0$ instead of $\alpha-\eps$, under \eqref{H2}. This concludes the proof of \eqref{eq: moment 1_alpha 1_kappaH2}.
\end{proof}

\subsubsection{Proofs of Theorems \ref{gene_thm_levy} and \ref{gene_thm_levy2}}
We are now in position to give the proofs of Theorems \ref{gene_thm_levy} and \ref{gene_thm_levy2}. Assume that either \eqref{H1} or \eqref{H2} is satisfied.\\
As in the $\alpha$-stable case (see \cref{s:proofs_levy}), it suffices to prove the convergence of the rescaled velocity process $(V_t^{(\eps)})_{t>0}:=(v_{\eps}V_{t/\eps})_{t> 0}$.\\
Thanks to \cref{limit_levy}, there exists $\theta\in \{ 1,\frac{1}{\alpha} \}$ such that $L^{(\eps)}:=(\eps^{\theta}L_{t/\eps})_{t\geq0}$ converges in distribution.\\
For sake of simplicity, we omit the dependencies of $p$ with respect to $\alpha$ and $\alpha_0$. Assume that $\gamma\geq0$ and $\beta >1+p(\gamma)-\theta$.\\
We can write, for $T\geq \eps t_0$, 
\begin{equation*}
    \sup_{\eps t_0\leq t \leq T} \abs{V_t^{(\eps)}-L_t^{(\eps)}}\leq  v_{\eps}(v_0 - L_{t_0})+v_{\eps}^{1-\gamma}\eps^{\beta-1}\ \int_{\eps t_0}^{T}K\abs{V_{u}^{(\eps)}}^{\gamma}u^{-\beta}\dd u.
\end{equation*}
Using the moment estimates on $V$ (see \cref{subsect:moment}), this leads, with $\beta \neq p(\gamma)+1$, to 
\begin{equation*}
    v_{\eps}^{1-\gamma}\eps^{\beta-1} \mathbb{E}\left[\int_{\eps t_0}^{T}K\abs{V_{u}^{(\eps)}}^{\gamma}u^{-\beta}\dd u\right] \leq  
            C\left( v_{\eps}\eps^{\beta-1-p(\gamma)} T^{p(\gamma)-\beta+1}-t_0^{p(\gamma)-\beta+1}v_{\eps}\right).
\end{equation*}
Hence, setting $q:= \min(\beta-1+\theta-p(\gamma), \theta)$, which is a positive number since $\beta >1+p(\gamma)-\theta$, we obtain \[\mathbb{E}\left[ \sup_{\eps t_0\leq t \leq T} \abs{V_t^{(\eps)}-L_t^{(\eps)}} \right] \underset{\eps \to 0}{=} O(\eps^{q}).\]
The case $\beta= 1 + p(\gamma)$ can be treated similarly. This concludes both of the proofs.

\subsection{Lévy driving process with a Brownian component} \label{ss: withBM}
We extend the results given in Theorems \ref{main_thm_inh_levy}, \ref{main_thm_inh_levy_critique}, \ref{gene_thm_levy} and \ref{gene_thm_levy2} to a driving process having a Brownian component. To this end, we decompose the Lévy noise as the sum of a Brownian part $(B_t)_{t\geq 0}$ and a Lévy part without a Brownian component $(L_t)_{t\geq 0}$. 
\begin{remark} The reasoning is the same with a Lévy process $\widehat{L} $ instead of the Brownian driving process. Indeed, let $\widehat{v}_{\eps}$ be the rate such that $(\widehat{v}_{\eps}\widehat{L}_{t/\eps})_{t\geq 0}$ converges. The conclusion follows, provided that $\widehat{v}_{\eps}^{-1}v_{\eps}$ tends to 0.
\end{remark}
Pick $\rho \geq0$. We consider the following one-dimensional stochastic kinetic model, for $t\geq t_0>0$,
\begin{equation}\label{SKE2}\tag{\text{SKE2}}
    \begin{gathered}
        \dd V_t^{(2)}= \dd L_t+\dd B_t-t^{-\beta}\rho \sgn(V_{t}^{(2)})\abs{V_{t}^{(2)}}^{\gamma}\dd t,\ V_{t_0}^{(2)}=v_0 \in \mathbb{R}, \\ 
    \mbox{and} \quad
    \dd X_t^{(2)}=V_t^{(2)} \dd t, \ X_{t_0}^{(2)}=x_0\in \mathbb{R}.
    \end{gathered}
\end{equation} 
We compare this solution with the solution of the equation driven by a Lévy process $L$ without a Brownian component. From  now on, for sake of convenience, the solutions of \eqref{equation time_levy} with $F(v)=\rho \sgn(v)\abs{v}^{\gamma}$, will be denoted by $(V_t^{(1)},X_t^{(1)})_{t\geq t_0}$ instead of $(V_t,X_t)_{t\geq t_0}$.\\
The asymptotic behavior of the latter SDE is given by Theorems \ref{main_thm_inh_levy}, \ref{main_thm_inh_levy_critique}, \ref{gene_thm_levy} and \ref{gene_thm_levy2}.
We will show that the Brownian part has no contribution.\\
Let us first point out some results about existence up to explosion of \eqref{SKE2}.
\begin{prop}~
    \begin{enumerate}[label=(\roman*)]
        \item When $\gamma \geq 1$, \eqref{SKE2} admits a unique solution up to explosion.
        \item Pick $\alpha\in(1,2)$. If \eqref{SKE2} is driven by the sum of a Brownian motion and an $\alpha$-stable process, then, if $0\leq \gamma<1$, there exists a solution of \eqref{SKE2}.
    \end{enumerate}
    
\end{prop}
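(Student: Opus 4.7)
The plan is to treat the two parts separately, in each case adapting the reasoning of \cref{existence hyp1} to accommodate the additional Brownian component in the driving noise.

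For part (i), with $\gamma \geq 1$, I would invoke \cref{Holder} to conclude that the drift $v \mapsto \rho\sgn(v)\abs{v}^{\gamma}t^{-\beta}$ is locally Lipschitz in $v$ and locally bounded on $[t_0,+\infty)\times \mathbb{R}$. The driving process $B+L$ is a semimartingale with independent increments, so the classical theory applies: \citer[Lemma 115 p. 78]{SituTheoryStochasticDifferential2005} gives pathwise uniqueness, and a standard localization combined with \citer[Theorem 9.1 p. 231]{IkedaStochasticDifferentialEquations1981} (which handles SDEs driven simultaneously by a Brownian motion and an independent Poisson random measure with locally Lipschitz coefficients) yields a unique strong solution up to the explosion time. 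This is essentially a verbatim adaptation of the $\gamma \geq 1$ case of \cref{existence hyp1}.

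For part (ii), the drift is only $\gamma$-Hölder with $\gamma<1$ (by \cref{Holder}) but has sublinear growth $\abs{F(v)}\leq K(1+\abs{v}^{\gamma})$. Since the drift fails to be Lipschitz, strong existence is out of reach by the previous argument, so I would construct a weak solution by a truncation-compactness procedure. First, pick a sequence $(F_n)_{n\geq 1}$ of globally Lipschitz functions approximating $F$ locally uniformly with $\abs{F_n(v)}\leq C(1+\abs{v}^{\gamma})$ uniformly in $n$. Part (i) then produces a unique strong global solution $V^{(n)}$ to the SDE with drift $F_n$. Second, by applying Itô's formula to $v\mapsto (\eta+v^2)^{\kappa/2}$ exactly as in \cref{esperance3}, the added Brownian component only contributes an extra bounded term $\tfrac{1}{2}f''(V^{(n)}_s)\dd s$ absorbed by the same $\|f''\|_\infty$ estimate of that proof; one obtains a uniform-in-$n$ moment estimate $\sup_n\mathbb{E}\bigl[\abs{V^{(n)}_t}^{\kappa}\bigr]\leq C_{\kappa,t_0}\,t^{p_{\alpha}(\gamma,\kappa)}$ for every $\kappa\in [0,\alpha)$.

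Third, I would use these uniform moment bounds together with the Aldous tightness criterion (exactly as employed in Step 2 of the proof of \cref{main_thm_inh_levy_critique}) to establish tightness of the laws of $(V^{(n)})$ on the Skorokhod space $\mathcal{D}([t_0,T])$ for every $T \geq t_0$. Prokhorov's theorem then yields a weakly convergent subsequence, and Skorokhod's representation realizes the limits on a common probability space together with copies of $(B,L)$. The main obstacle lies in the final passage to the limit in the non-linear drift term $\int_{t_0}^{\cdot} s^{-\beta} F_n(V_s^{(n)})\dd s$: since evaluation is not continuous for the Skorokhod topology at fixed $s$, one must argue outside the (at most countable) set of fixed discontinuities of the limit process, and then use uniform integrability of $F_n(V_s^{(n)})s^{-\beta}$ on bounded intervals, guaranteed by $\gamma<\alpha$ combined with the uniform moment estimate, in order to invoke dominated convergence and identify the limit as a weak solution of \eqref{SKE2}.
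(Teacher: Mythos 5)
Your part (i) follows essentially the same path as the paper: locally Lipschitz drift (via \cref{Holder}), localization, and an off-the-shelf well-posedness theorem for Lévy-driven SDEs — the paper cites \citer[Theorem 3.1 and Remark 2 p. 338-339]{KunitaStochasticDifferentialEquations2004} where you cite Situ and Ikeda--Watanabe, but the argument is the same. For part (ii) you take a genuinely different and much heavier route: the paper simply applies, after a standard localization, the existence result \citer[Theorem 3.1 p. 866]{KurenokStochasticEquationsTimeDependent2007} for continuous time-dependent drift, whereas you rebuild existence from scratch by Lipschitz approximation of $F$, uniform moment estimates in the spirit of \cref{esperance3}, Aldous tightness, Skorokhod representation and passage to the limit. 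Your scheme is viable and has the merit of being self-contained (and your observation that the Brownian part only adds a bounded $\tfrac12 f''$ term to the moment estimates is correct), but the compactness step hides the real work: you must keep the pair (approximate solution, driving noise) jointly tight, check that the limiting noise is still a Lévy process (with the same characteristics) with respect to the filtration generated by the limit pair so that the limit is indeed a weak solution, and handle the fixed-discontinuity issue you mention; none of this is needed in the paper's one-line citation. Note also that, because the noise enters additively, there is an even cheaper alternative you could have used: $Y:=V-(B+L)$ solves pathwise the random ODE $Y'_t=-t^{-\beta}F\bigl(Y_t+B_t+L_t\bigr)$ with continuous right-hand side, so Peano's theorem gives existence up to explosion path by path, avoiding both compactness and Kurenok's theorem.
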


\begin{proof}~
    \begin{enumerate}[label=(\roman*)]
        \item Since the drift coefficient is locally Lipschitz, by a standard localization argument, using  \citer[Theorem 3.1 and Remark 2 p. 338-339]{KunitaStochasticDifferentialEquations2004}, there is a unique solution defined up to explosion. 
        \item Since the drift coefficient is a continuous function, using a standard localization argument, we can apply \citer[Theorem 3.1 p. 866]{KurenokStochasticEquationsTimeDependent2007} to conclude.
    \end{enumerate}
\end{proof}
Assume in the following that \eqref{SKE2} admits a unique solution up to explosion.
\begin{theo} Consider $\gamma \in [0,\alpha)$ and $\beta\geq0$. Let $(V_t^{(2)},X_t^{(2)})_{t\geq t_0}$ be a solution to \eqref{SKE2}. The Lévy noise is supposed to be the sum of a Brownian noise $(B_t)_{t\geq0}$ and a Lévy process $(L_t)_{t\geq0}$ without Brownian component.
Suppose also that \eqref{equation time_levy} satisfies the conditions of either Theorems \ref{main_thm_inh_levy}, \ref{main_thm_inh_levy_critique}, \ref{gene_thm_levy}  or \ref{gene_thm_levy2}. 
Then $(V^{(2)},X^{(2)})$ has the same asymptotic behavior as $(V^{(1)},X^{(1)})$, which is given in Theorems \ref{gene_thm_levy} and \ref{gene_thm_levy2}. 
    
\end{theo}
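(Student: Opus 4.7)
The plan is to mimic the argument of \cref{ss:proof extended}. Setting $V_t^{(2,\eps)}:=v_{\eps}V_{t/\eps}^{(2)}$, with $v_{\eps}\in\{\eps,\eps^{\nicefrac{1}{\alpha}}\}$ the rate provided by \cref{limit_levy}, a change of variables in \eqref{SKE2} yields
\begin{equation*}
V_t^{(2,\eps)} = v_\eps(v_0 - L_{t_0} - B_{t_0}) + L_t^{(\eps)} + v_\eps B_{t/\eps} - v_\eps^{1-\gamma}\eps^{\beta-1}\int_{\eps t_0}^t \rho\,\sgn(V_u^{(2,\eps)})\abs{V_u^{(2,\eps)}}^{\gamma}u^{-\beta}\dd u.
\end{equation*}
Since $L^{(\eps)}\Longrightarrow \mathcal{L}$ is known from \cref{limit_levy}, the continuous-mapping reduction worked out at the beginning of \cref{s:proofs} reduces the theorem to proving
$\sup_{\eps t_0\leq t\leq T}\abs{V_t^{(2,\eps)}-L_t^{(\eps)}}\to 0$ in probability.

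The first new ingredient is the smallness of the rescaled Brownian contribution. By Brownian self-similarity, $(B_{t/\eps})_{t\geq 0}\stackrel{d}{=}(\eps^{-\nicefrac{1}{2}}B_t)_{t\geq 0}$, so Doob's maximal inequality gives
\begin{equation*}
\mathbb{E}\left[\sup_{\eps t_0\leq t\leq T}v_\eps^{2}(B_{t/\eps}-B_{t_0})^{2}\right]\leq C\,v_\eps^{2}\,\eps^{-1}\,T.
\end{equation*}
Both candidates for $v_\eps$ satisfy $v_\eps \eps^{-\nicefrac{1}{2}}\to 0$ on $\alpha\in(0,2)\setminus\{1\}$: for $v_\eps=\eps$ the ratio equals $\sqrt{\eps}$, and for $v_\eps=\eps^{\nicefrac{1}{\alpha}}$ it equals $\eps^{\nicefrac{1}{\alpha}-\nicefrac{1}{2}}$ with $\nicefrac{1}{\alpha}>\nicefrac{1}{2}$. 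Hence the rescaled Brownian term tends to $0$ in $L^{2}$, uniformly on every compact interval.

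The second ingredient is to extend the moment estimates of \cref{subsect:moment} to the process $V^{(2)}$. Re-running the Itô computations of \cref{s:moment} and \cref{subsect:moment} on $f(V^{(2)})$, where $f$ is either the mollifier $v\mapsto \sqrt{v^{2}+\nicefrac{1}{n}}$ (for $\alpha\in(0,1)$) or $v\mapsto(\eta+v^{2})^{\kappa/2}$ (for $\alpha\in(1,2)$), produces two extra terms compared with the pure-jump case: the local martingale $\int f'(V_s^{(2)})\dd B_s$, which vanishes in expectation once stopped at $\tau_r$, and the bracket correction $\tfrac{1}{2}\int f''(V_s^{(2)})\dd s$. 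In both choices of $f$, $f''$ is bounded by a constant depending on $\kappa$ and $\eta$, so this correction contributes at most a linear-in-$t$ term that is absorbed in the right-hand side $C_{\kappa,t_0,b}\,t^{p_\alpha(\gamma,\kappa)}$ already appearing in \cref{gene_esperance,gene_esperance2}. The non-explosion criterion \cref{explosion} still applies, and the exponents $p_\alpha(\gamma)$ defining the admissible range of $\beta$ remain unchanged.

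With these two ingredients in hand, the proof concludes by taking expectations in the displayed identity: the drift integral is controlled using the $V^{(2)}$ moment estimates exactly as in \cref{ss:proof extended}, and the Brownian term by the $L^{2}$ bound above, giving $\mathbb{E}[\sup_{\eps t_0\leq t\leq T}\abs{V_t^{(2,\eps)}-L_t^{(\eps)}}]=O(\eps^{q})$ for some $q>0$. Convergence of the rescaled position then follows from the continuous mapping argument of \cref{s:proofs}. The main obstacle is the careful book-keeping in the iterative refinement step (analogue of Step 2 in \cref{esperance3}), where one must check that the extra Brownian correction $\int f''(V_s^{(2)})\dd s$ does not degrade the recurrence $p_\alpha(\gamma,\kappa)=1+p_\alpha(\gamma,\kappa/2)$ used to sharpen the moment bounds; apart from this technicality, the passage to the limit reproduces verbatim the pure-jump case.
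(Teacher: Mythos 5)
Your route is genuinely different from the paper's, and it has a real gap in the moment-estimate step. You propose to prove $\sup_{\eps t_0\leq t\leq T}\abs{V^{(2,\eps)}_t-L^{(\eps)}_t}\to 0$ directly, which forces you to extend the moment estimates of \cref{subsect:moment} to $V^{(2)}$. Your justification is that "in both choices of $f$, $f''$ is bounded by a constant depending on $\kappa$ and $\eta$". This is false for the mollifier used when $\alpha\in(0,1)$: for $f_n(x)=\sqrt{x^2+\nicefrac{1}{n}}$ one has $f_n''(0)=\sqrt{n}$, so the new Brownian bracket term $\tfrac{A}{2}\int_{t_0}^{t}f_n''(V^{(2)}_s)\dd s$ is only bounded by $C\sqrt{n}\,t$ and cannot be absorbed before letting $n\to+\infty$; in the limit it is (up to a constant) the local time of $V^{(2)}$ at zero, and no control of that quantity is offered — note that for $\alpha<1$ the process has no first moment, so the usual Tanaka-type bound is not available. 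The pure-jump proofs of \cref{esperance} and \cref{gene_esperance} work precisely because $L$ has finite variation there and no second-order term appears; with a Brownian component that structure is destroyed, and this is exactly the parameter regime $\alpha\in(0,1)$ covered by \cref{gene_thm_levy}. (Your bookkeeping for $\alpha\in(1,2)$, where $f(v)=(\eta+v^2)^{\kappa/2}$ does have a bounded second derivative, and your estimate $v_\eps^2\eps^{-1}\to0$ for the rescaled Brownian supremum, are fine.)

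The paper sidesteps all of this by never estimating moments of $V^{(2)}$: it compares $V^{(1)}$ and $V^{(2)}$ driven by the same jump noise, applies Itô's formula to $\bigl(V^{(1,\eps)}_t-V^{(2,\eps)}_t\bigr)^2$, and discards the drift-difference term because $F(v)=\rho\sgn(v)\abs{v}^{\gamma}$ is non-decreasing, so that term has a favorable sign; the remaining Brownian contribution is handled by BDG and Gronwall, giving uniform convergence to $0$ in probability of the difference, and the conclusion follows from \cref{gene_thm_levy}, \cref{gene_thm_levy2} together with \cref{Skoro}. The monotonicity of $F$ is the key structural ingredient you do not use; if you want to keep your direct strategy you must either supply a genuine moment bound for $V^{(2)}$ when $\alpha\in(0,1)$ (controlling the local time term) or first remove the Brownian part by a comparison argument — which is precisely the paper's proof.
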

\begin{proof}
    For $i\in \{1,2\}$, let us introduce the following rescaled processes:
    \[(V_t^{(i,\eps )})_{t\geq \eps t_0}:= (v_{\eps}V_{t/\eps}^{(i)})_{t\geq t_0}, \quad (X_t^{(i,\eps)})_{t\geq \eps t_0}:= (\eps v_{\eps}X_{t/\eps}^{(i)})_{t\geq t_0}, \quad \mbox{and}\quad (B_t^{(\eps)})_{t\geq0} := (\sqrt{\eps}B_{t/\eps})_{t\geq0}.\]
    We write, for all $t\geq \eps t_0$,
    \[V_t^{(1,\eps)} -V_t^{(2,\eps)}= v_{\eps}B_{t_0}-v_{\eps}\eps^{-\frac{1}{2}}B_t^{(\eps)} -v_{\eps}^{1-\gamma}\eps^{\beta-1}\int_{\eps t_0}^{t}v_{\eps}^{\gamma}\left(F\left(\dfrac{V_u^{(1,\eps)}}{v_{\eps}^{\gamma}} \right)-F\left(\dfrac{V_u^{(2,\eps)}}{v_{\eps}^{\gamma}} \right)  \right)u^{-\beta}\dd u.\]
    Hence, by Itô's formula, we get
    \begin{multline*}
        \left(V_t^{(1,\eps)} -V_t^{(2,\eps)}\right)^2= v_{\eps}^2B_{t_0}^2-2\int_{\eps t_0}^t \left( V_s^{(1,\eps)} -V_s^{(2,\eps)}\right)v_{\eps}\eps^{-\frac{1}{2}}\dd B_s^{(\eps)}+ v_{\eps}^2\eps^{-1}(t-\eps t_0)\\-2v_{\eps}^{1-\gamma}\eps^{\beta-1}\int_{\eps t_0}^t 
            \left( V_s^{(1,\eps)} -V_s^{(2,\eps)}\right)v_{\eps}^{\gamma}\left(F\left(\dfrac{V_u^{(1,\eps)}}{v_{\eps}^{\gamma}} \right)-F\left(\dfrac{V_u^{(2,\eps)}}{v_{\eps}^{\gamma}} \right)  \right)
            u^{-\beta}\dd u.
    \end{multline*}
    The last term on the right-hand side of the upper equality is positive since $F$ is an increasing function. \\
    Moreover, we can apply BDG inequalities (see \citer[Theorem 73 p. 222]{ProtterStochasticIntegrationDifferential2005}) to the local martingale, defined by
    \[M_t :=  -2\int_{\eps t_0}^t \left( V_s^{(1,\eps)} -V_s^{(2,\eps)}\right)v_{\eps}\eps^{-\frac{1}{2}}\dd B_s^{(\eps)}.\]
    This yields, for each $T>0$,
    \[\mathbb{E}\left[\sup_{\eps t_0\leq t \leq T}M_t^2 \right] \leq \mathbb{E}\left[ 4v_{\eps}^2\eps^{-1}\int_{\eps t_0}^T \left( V_s^{(1,\eps)} -V_s^{(2,\eps)}\right)^2\dd s \right].  \]
    For the sake of clarity, given $x$ and $y$, we introduce $x\lesssim y$ to mean that there exists a positive constant $C$ such that $x\leq Cy$.
    We get
    \[\begin{aligned}
        \mathbb{E}\left[\sup_{\eps t_0 \leq t \leq T}\left(V_t^{(1,\eps)} -V_t^{(2,\eps)}\right)^4 \right] & \lesssim  v_{\eps}^4\mathbb{E}\left[B_{t_0}^4\right] + v_{\eps}^4\eps^{-2}T^2+ \mathbb{E}\left[\sup_{\eps t_0\leq t \leq T}M_t^2  \right] \\ & \lesssim v_{\eps}^4\mathbb{E}\left[B_{t_0}^4\right] + v_{\eps}^4 \eps^{-2}T^2 + 4v_{\eps}^2\eps^{-1}\int_{\eps t_0}^T \mathbb{E}\left[\left( V_s^{(1,\eps)} -V_s^{(2,\eps)}\right)^2  \right]\dd s.
    \end{aligned}\]
    Hence, using that for all real $a$, $a^2\leq a^4+1$, we deduce that
    \begin{multline*}
        \mathbb{E}\left[\sup_{\eps t_0 \leq t \leq T}\left(V_t^{(1,\eps)} -V_t^{(2,\eps)}\right)^4 \right]  \\ \lesssim v_{\eps}^4\mathbb{E}\left[B_{t_0}^4\right] + v_{\eps}^4\eps^{-2}T^2 + 4v_{\eps}^2\eps^{-1}T +4v_{\eps}^2\eps^{-1}\int_{\eps t_0}^T \mathbb{E}\left[\sup_{\eps t_0 \leq t \leq s}\left( V_t^{(1,\eps)} -V_t^{(2,\eps)}\right)^4\right] \dd s.
    \end{multline*}
    Applying Grönwall's lemma, up to stopping times, we get
    \[ \mathbb{E}\left[\sup_{\eps t_0 \leq t \leq T}\left(V_t^{(1,\eps)} -V_t^{(2,\eps)}\right)^4 \right] \lesssim \left( v_{\eps}^4\mathbb{E}\left[B_{t_0}^4\right] + v_{\eps}^4\eps^{-2}T^2 + 4v_{\eps}^2\eps^{-1}T \right) \exp \left( 4v_{\eps}^2\eps^{-1} (T-\eps t_0) \right). \]
    We deduce also that
    \[ \begin{aligned}
        \mathbb{E}\left[\sup_{\eps t_0 \leq t \leq T}\left(X_t^{(1,\eps)} -X_t^{(2,\eps)}\right)^4 \right] &\leq T^4 \mathbb{E}\left[\sup_{\eps t_0 \leq t \leq T}\left(V_t^{(1,\eps)} -V_t^{(2,\eps)}\right)^4 \right]   \\ & \lesssim T^4\left( v_{\eps}^4\mathbb{E}\left[B_{t_0}^4\right] + v_{\eps}^4\eps^{-2}T^2 + 4v_{\eps}^2\eps^{-1}T \right) \exp \left( 4v_{\eps}^2\eps^{-1} (T-\eps t_0) \right). 
    \end{aligned}\]
    Hence, $\left(V_t^{(1,\eps)} -V_t^{(2,\eps)} , X_t^{(1,\eps)} -X_t^{(2,\eps)}\right)_{t\geq \eps t_0}$ converges uniformly towards 0 in probability on compacts, as $\eps$ goes to zero.\\
    The conclusion follows from Theorems \ref{gene_thm_levy}, \ref{gene_thm_levy2}, \citer[Theorem 3.1 p. 27]{BillingsleyConvergenceprobabilitymeasures1999} and \cref{Skoro}.
\end{proof}
\appendix
\section{Some technical results}
Let us state first a Grönwall-type lemma which has been used to get moment estimates. The proof can be found in \cite{GradinaruAsymptoticbehaviortimeinhomogeneous2021a}.
\begin{lem}[Grönwall-type lemma] \label{gronwall_levy}
    Fix $r\in [0,1)$ and $t_0\in \mathbb{R}$.
    Assume that $g$ is a non-negative real-valued function, $b$ is a positive function and $a$ is a differentiable real-valued  function. Moreover, suppose that the function $bg^{r}$ is a continuous function.\\ Assume that
    \begin{equation}\label{eq:debut_levy}
        \forall t \geq t_0, \ g(t)\leq a(t)+\int_{t_0}^{t}b(s)g(s)^{r}\dd s. 
    \end{equation}
    Then, setting $C_{r}:= 2^{\frac{1}{1-r}}$,
    \[\forall t \geq t_0, \ g(t)\leq C_{r} \left[a(t)+\left((1-r)\int_{t_0}^tb(s)\dd s\right)^{\frac{1}{1-r}}  \right].\] 
\end{lem}
\begin{remark}
    Call $H(t)$ the right-hand side of \eqref{eq:debut_levy}.
    If $g$ is not continuous, note that the function $H$
    is still continuous and satisfies \eqref{eq:debut_levy} (since $b$ is positive and $g\leq H$). So, one can apply the lemma to $H$ and thereafter use the inequality $g\leq H$. 
\end{remark}

We state now a technical lemma about the convergence in the spaces $\mathcal{C}$ and $\mathcal{D}$.
We recall that the space of continuous functions ${\mathcal{C}}$ is endowed with the uniform topology
\[\displaystyle \dd_u:f,g\in \mathcal{C}((0,+\infty))\mapsto \sum_{n=1}^{+\infty}\dfrac{1}{2^n}\min\Big(1,\sup_{[\frac{1}{n},n]}\abs{f-g}\Big).\]
Set $\Lambda:=\{\lambda:\mathbb{R}^+ \to \mathbb{R}^+, \text{continuous and increasing function such that } \lambda(0)=0, \ \lim\limits_{t\to +\infty}\lambda(t)=~+\infty  \}$ and  \[k_n(t)=\begin{cases}
    1     & \mbox{if } \frac{1}{n}\leq  t\leq n, \\
    n+1-t & \mbox{if } n<t<n+1,                  \\
    0     & \mbox{if } n+1\leq t.
\end{cases}  \]
The space of càdlàg functions ${\mathcal{D}}$ is endowed with the Skorokhod topology $\dd_s$ defined for $(f, g)\in \mathcal{D}((0,+\infty))^2$ by
\[
    \sum_{n= 1}^{+\infty}\dfrac{1}{2^n}\min\left(1, \inf\left\{ a \mbox{ s.t. } \exists \lambda\in \Lambda, \ \sup_{s\neq t}\abs{\log \dfrac{\lambda(t)-\lambda(s)}{t-s}}\leq a, \ \sup_{t\geq\frac1n}\abs{k_n(t)\left(f\circ \lambda(t)-g(t) \right)  } \leq a \right\}\right). \]
\begin{lem}\label{Skoro}~
    \begin{enumerate}[label=(\roman*)]
        \item The uniform distance is finer than the Skorokhod one i.e.\ $\dd_s\leq \dd_u$.
        \item Let $(f_{\eps})_{\eps\geq 0},(h_{\eps})_{\eps \geq 0}$ be two sequences of functions of $\mathcal{D}$. If for all $ n\geq 1$,
         \[\sup_{t\in[\frac{1}{n},n]}\abs{ f_{\eps}(t)-h_{\eps}(t) }\stackrel{\mathbb{P}}{\to}0\ \mbox{as }\eps\to 0,\] then $\dd (f_{\eps}, h_{\eps})\stackrel{\mathbb{P}}{\to}0$, where $\dd \in \{\dd_u, \dd_s \}$.
    \end{enumerate}
\end{lem}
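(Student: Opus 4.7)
My plan is to prove (i) by using the identity time change as a candidate $\lambda$ inside the infimum defining $d_s$, and then to deduce (ii) either from (i) plus a routine tail-cutting argument applied to $d_u$, or directly in the case $d = d_s$.

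For (i), I would fix $f, g \in \mathcal{D}$ and test the infimum in the $n$-th term of $d_s(f,g)$ with $\lambda = \mathrm{id}$. Then $\sup_{s\neq t}|\log((\lambda(t)-\lambda(s))/(t-s))| = 0$, $f\circ\lambda = f$, and since $0\leq k_n \leq 1$ with support in $[1/n, n+1]$,
\[
\sup_{t>0}\,|k_n(t)(f(t)-g(t))|\;\leq\;\sup_{[1/n,\,n+1]}|f-g|.
\]
Hence the infimum is bounded by $\sup_{[1/n,n+1]}|f-g|$, and after summing against $2^{-n}$ one recovers (up to a harmless reindexing of consecutive terms) the uniform distance $d_u(f,g)$. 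In particular, the $n$-th Skorokhod term is dominated by the $n$-th uniform term once one absorbs the half-open piece $[n,n+1]$ into the $(n+1)$-th slice $[1/(n+1), n+1]$, which gives the comparison of the two series.

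For (ii), by (i) it is enough to treat the case $d = d_u$. Fix $\eta>0$ and choose $N$ so large that $\sum_{n>N}2^{-n} < \eta/2$. Then
\[
d_u(f_\eps, h_\eps)\;\leq\;\sum_{n=1}^{N}2^{-n}\min\!\Big(1,\sup_{[1/n,n]}|f_\eps-h_\eps|\Big)+\frac{\eta}{2}.
\]
Each of the finitely many summands is at most $2^{-n}\sup_{[1/n,n]}|f_\eps-h_\eps|$, which by assumption tends to $0$ in probability as $\eps\to 0$. A finite sum of quantities converging to $0$ in probability still converges to $0$ in probability, so the right-hand side is less than $\eta$ with probability tending to one. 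Hence $d_u(f_\eps,h_\eps)\to 0$ in probability, and (i) then yields $d_s(f_\eps,h_\eps)\to 0$ in probability as well.

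The only mildly delicate point will be the bookkeeping in (i): the Skorokhod weight $k_n$ does not vanish exactly on $[1/n,n]$ but on a slightly larger window $[1/n, n+1]$, so the term-by-term comparison needs a one-index shift and picks up at worst a factor $2$; topological equivalence (and thus the conclusion of (ii)) is unaffected. Everything else is routine.
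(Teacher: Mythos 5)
Your proof is correct and follows essentially the same route as the paper's: for (i) you test the infimum in $d_s$ with $\lambda=\mathrm{id}$ and use that $0\le k_n\le 1$ vanishes beyond $n+1$, and for (ii) you truncate the series and use that a finite sum of terms tending to $0$ in probability tends to $0$ in probability (the paper phrases this as a union bound). The index shift/factor-$2$ bookkeeping you flag is equally present in the paper's one-line bound (which dominates the $n$-th Skorokhod term by $\sup_{[1/(n+1),\,n+1]}\abs{f-g}$), and, as you note, the resulting comparison $d_s\lesssim d_u$ is all that is needed for (ii).
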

\begin{proof}
    Let $f,g$ be two càdlàg functions. The first point is true using the definition of the metrics $\dd_s$ and $\dd_u$ and noting that
    \[\inf\left\{ a \mbox{ s.t. } \exists \lambda\in \Lambda, \ \sup_{s\neq t}\abs{\log \dfrac{\lambda(t)-\lambda(s)}{t-s}}\leq a, \ \sup_{t\geq\frac1n}\abs{k_n(t)\left(f(\lambda(t)-g(t) \right)  } \leq a \right\} \leq \sup_{[\frac{1}{n+1},n+1]}\abs{f-g}. \]
    Let us now prove the second part.
    Assume that for all $ n\geq 1$, $\sup_{[\frac{1}{n},n]}\abs{ f_{\eps}-h_{\eps} }\stackrel{\mathbb{P}}{\longrightarrow}0$, as $\eps \to 0$.
    Fix $\eta >0$ and choose $N>0$ such that $\sum\limits_{n=N+1}^{+\infty}\frac{1}{2^n}\leq \frac{\eta}{2}$. Then,
    \[
    \dd_s \left(f_{\eps},h_{\eps}\right) \leq \dd_u(f_{\eps},h_{\eps}) \leq \frac{\eta}{2}+ \sum_{n=1}^N\dfrac{1}{2^n} \sup_{[\frac{1}{n},n]}\abs{f_{\eps}-h_{\eps}}  .
    \]
    It follows, by setting $\eta':=\frac{\eta}{2}\left(\sum\limits_{n=1}^{+\infty}\frac{1}{2^n}\right)^{-1}$, that \[\mathbb{P}\left( \dd \left(f_{\eps},h_{\eps}\right) >\eta \right) \leq  \sum_{n=1}^{N}\mathbb{P}\left(\sup_{[\frac{1}{n},n]}\abs{ f_{\eps}-h_{\eps} }>\eta' \right)\underset{\eps \to 0}{\longrightarrow}0.\]
\end{proof}
For the sake of completeness, we state and improve the result of \citer[Problem 4.12 p. 64]{KaratzasBrownianMotionStochastic1998}, on a general metric space.
\begin{lem}\label{continous_mapping}
    Let $S$ be a Polish metric space endowed with a Borel $\sigma$-field $\mathcal{S}$. 
    Suppose that $(P_n)_{n\geq 1}$ is a sequence of probability measures on $(S,\mathcal{S})$ which converges weakly to a probability measure $P$. Suppose, in addition, that the sequence $(f_n)_{n\geq 1}$ of real-valued continuous functions on $S$ is uniformly bounded and converges to a continuous function $f$, the convergence being uniform on compact subsets of $S$. Then, we have
    \[{\lim_{n \to +\infty} \int_{S}f_n(\omega)\dd P_n(\omega) }=\int_{S}f(\omega)\dd P(\omega).   \]
\end{lem}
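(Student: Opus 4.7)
The plan is to reduce the convergence $\int f_n \dd P_n \to \int f \dd P$ by writing the telescoping decomposition
\[\int_S f_n \dd P_n - \int_S f \dd P \;=\; \int_S (f_n - f) \dd P_n \;+\; \left(\int_S f \dd P_n - \int_S f \dd P\right),\]
and then treating each summand separately. Since $(f_n)$ is uniformly bounded by some constant $M>0$ and converges pointwise to the continuous function $f$ (uniform convergence on compacts implies pointwise convergence), the limit $f$ is itself bounded by $M$; hence $f \in C_b(S)$. The second summand therefore tends to zero by the very definition of the weak convergence $P_n \Longrightarrow P$.

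For the first summand, the plan is to combine tightness with the uniform-on-compacts convergence of $f_n$ to $f$. Since $S$ is Polish and $(P_n)$ converges weakly, Prokhorov's theorem guarantees tightness of the family $\{P_n : n \geq 1\}$: for any $\eta > 0$, there exists a compact set $K \subset S$ with $\sup_n P_n(K^c) < \eta$. Splitting the integral over $K$ and $K^c$ and using the uniform bound $|f_n - f| \leq 2M$ on $K^c$, I obtain
\[\left|\int_S (f_n - f) \dd P_n\right| \;\leq\; \sup_{\omega \in K} |f_n(\omega) - f(\omega)| \;+\; 2M\,\eta.\]
By hypothesis, $f_n \to f$ uniformly on the compact set $K$, so the first term tends to zero as $n \to \infty$. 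Taking $\limsup_{n \to \infty}$ and then letting $\eta \to 0$ finishes the estimate.

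The main (and rather mild) obstacle lies in the initial reduction: one must invoke the non-trivial direction of Prokhorov's theorem that weak convergence of $(P_n)$ on a Polish space entails tightness, and one must also verify that the pointwise limit $f$ inherits uniform boundedness from $(f_n)$. Both are classical facts, so the argument boils down to a careful $\eta$-splitting of the integral, with the compact set $K$ supplied by tightness playing the role of the set on which uniform convergence of $f_n \to f$ is strong enough to dominate the local error.
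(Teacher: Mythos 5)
Your proof is correct and follows essentially the same route as the paper: both obtain tightness of $(P_n)$ from weak convergence on the Polish space, split the integral over a compact set $K$ and its complement, use the uniform bound $M$ off $K$, uniform convergence of $f_n\to f$ on $K$, and weak convergence applied to the bounded continuous limit $f$. Your two-term decomposition simply regroups the paper's four terms $A,B,C,D$, so there is no substantive difference.
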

\begin{proof}
    Notice that, since $(P_n)_{n\geq 1}$ converges weakly thus, it is tight. 
    So, for each $\eps>0$, there exists a compact subset $K$ of $S$ such that for any $n\geq 1$, $P_n(K)\geq 1-\eps$.\\
    Let us decompose
    \[ \int_{S}f_n\dd P_n -\int_{S}f\dd P = A+B+C+D,  \]
    where
    \[            A  :=\int_{S\setminus K }f_n\dd P_n,       \quad 
            B :=\int_{ K }(f_n-f)\dd P_n,             \quad 
            C  :=\int_{S\setminus K }f\dd P_n,        \quad \mbox{and} \quad
            D := \int_{S }f\dd P_n -\int_{S }f \dd P.
    \]
    Let $M$ be a bound for the sequence $(f_n)$. Thus, by the choice of $K$,
    \[\abs{A}\leq  M P_n(S\setminus K)\leq M\eps.\] The third integral can be treated analogously.
    Besides, since the sequence $(f_n)$ converges uniformly on $K$ to $f$, there exists $n_{\eps}$ such that for all $n\geq n_{\eps}$, $\sup_K\abs{f_n-f}\leq \eps$. Thereby, we get
    \[\abs{B}\leq \eps P_n(K). \]
    The last integral is smaller than $\eps$ for $n$ large enough, since $P_n$ converges weakly to $P$, and this concludes the proof.
\end{proof}
\begin{remark} \cref{continous_mapping} could be applied with $S=\mathcal{C}([0,+\infty))$ or $\mathcal{D}([0,+\infty))$. However, the result for $S= \mathcal{C}([0,+\infty))$ is already contained in \citer[Problem 4.12 p. 64]{KaratzasBrownianMotionStochastic1998}.
\end{remark}
\begin{lem}\label{Markov}
    Let $(Y_t^{y})_{t\geq0}$ be the solution of a time-homogeneous SDE driven by an $\alpha$-stable process,
    \begin{equation}\label{eq: EDS_markov}
        \begin{cases}
            \dd Y_t= \dd L_t + b(Y_t)\dd t\\
            Y_{0}=y
        \end{cases}
    \end{equation}
    The measurable function $b$ is supposed to be such that \eqref{eq: EDS_markov} has a pathwise unique strong solution. Then $(Y_t)_{t\geq0}$ is a Markov process. \\Namely,
    for any $d\geq1$, $0\leq t_1\leq \cdots \leq t_d$, $u\geq0$ and any bounded measurable function $\phi: \mathbb{R}^d\to \mathbb{R}$,
    \begin{equation}\label{eq: markov}
        \mathbb{E}\left[\phi(Y_{t_1+u}^{y},\cdots,Y_{t_d+u}^{y} )\Big| \mathcal{F}_u \right]  =  \mathbb{E}\left[\phi(Y_{t_1}^{z},\cdots,Y_{t_d}^{z}) \right]_{z=Y_u^{y}}.
    \end{equation}
\end{lem}
\begin{proof} We give a proof for $d=2$, the general case being similar. Call $(Y_t^{s,y})$ the solution $ \dd Y_t= \dd L_t + b(Y_t)\dd t$, satisfying $Y_s=y$. 
    Let $\phi: \mathbb{R}^2\to \mathbb{R}$ be a bounded measurable function. Pick $u\geq 0$.
    Consider, for $y\in \mathbb{R}$ and $u\leq s \leq t$ the function
    \[G(y,s,t,u):= \left(Y_{s}^{u,y},Y_{t}^{u,y}\right)=\left(y+ L_s-L_u+\int_u^sb(Y_h)\dd h,\ y+ L_t-L_u+\int_u^tb(Y_h)\dd h \right). \]
    Pick $0\leq s\leq t$. Using pathwise uniqueness, $\left(Y_{s+u}^y,Y_{t+u}^y \right)= G\left(Y_u^y,s+u,t+u,u \right) $. Moreover, by time-homogeneity of the SDE, $(Y_{s+u}^{u,y})_{s\geq 0}$ and $(Y_s^{y})_{s\geq 0}$ have the same distribution. As a consequence, $G(y,s+u,t+u,u)=G(y,s,t,0)$. Besides, by Markov property of Lévy processes, the function $G$ is independent of $\mathcal{F}_u$. Hence, 
    \[\begin{aligned}
        \mathbb{E}\left[\phi(Y_{s+u}^{y},Y_{t+u}^{y} )\Big| \mathcal{F}_u \right]&= \mathbb{E}\left[\phi\circ G\left(Y_u^y,s+u,t+u,u \right)\Big| \mathcal{F}_u \right] = \mathbb{E}\left[\phi\circ G\left(z,s,t,0 \right)\Big| \mathcal{F}_u \right]_{z=Y_u^y} \\ &= \mathbb{E}\left[\phi\circ G\left(z,s,t,0 \right)\right]_{z=Y_u^y}\\&= \mathbb{E}\left[\phi(Y_{s}^{z},Y_{t}^{z}) \right]_{z=Y_u^{y}}
    \end{aligned}\]
    This concludes the proof. 
\end{proof}
{\bf Acknowledgements}
The authors would like to thank Nicolas Fournier and Thomas Cavallazzi for helpful discussions about moment estimates. We would like to thank Vlad Bally for his suggestion which gives rise to the \cref{s: extended}.
We are grateful to Thomas Cavallazzi for his careful reading of the manuscript.
\bibliographystyle{alpha}
\bibliography{kinetic_jumps}
\end{document}